\newcommand{\eps}{\varepsilon}
\renewcommand{\epsilon}{\varepsilon}
\newcommand{\p}{\mathbb{P}}
\newtheorem{thm}{Theorem}
\newtheorem{defin}{Definition}
\newtheorem{lem}{Lemma}
\newtheorem{prop}{Proposition}
\newtheorem{remark}{Remark}
\newtheorem{cor}{Corollary}
\DeclareMathOperator*{\argmin}{arg\,min}
\begin{document}

\title{Lower Memory Oblivious (Tensor) Subspace Embeddings with Fewer Random Bits:  Modewise Methods for Least Squares} 

\author{M. A. Iwen, D. Needell, E. Rebrova, and A. Zare} 
\maketitle
\begin{abstract}
 In this paper new general modewise Johnson-Lindenstrauss (JL) subspace embeddings are proposed that are both considerably faster to generate and easier to store than traditional JL embeddings when working with extremely large vectors and/or tensors.  
 
 Corresponding embedding results are then proven for two different types of low-dimensional (tensor) subspaces.  The first of these new subspace embedding results produces improved space complexity bounds for embeddings of rank-$r$ tensors whose CP decompositions are contained in the span of a fixed (but unknown) set of $r$ rank-one basis tensors.  In the traditional vector setting this first result yields new and very general near-optimal oblivious subspace embedding constructions that require fewer random bits to generate than standard JL embeddings when embedding subspaces of $\mathbbm{C}^N$ spanned by basis vectors with special Kronecker structure.  The second result proven herein provides new fast JL embeddings of arbitrary $r$-dimensional subspaces $\mathcal{S} \subset \mathbbm{C}^N$ which also require fewer random bits (and so are easier to store -- i.e., require less space) than standard fast JL embedding methods in order to achieve small $\epsilon$-distortions.  These new oblivious subspace embedding results work by $(i)$ effectively folding any given vector in $\mathcal{S}$ into a (not necessarily low-rank) tensor, and then $(ii)$ embedding the resulting tensor into $\mathbbm{C}^m$ for $m \leq C r \log^c(N) / \epsilon^2$.  
 
 Applications related to compression and fast compressed least squares solution methods are also considered, including those used for fitting low-rank CP decompositions, and the proposed JL embedding results are shown to work well numerically in both settings.
\end{abstract}

\section{Motivation and Applications}

Due to the recent explosion of massively large-scale data, the need for geometry preserving dimension reduction has become important in a wide array of applications in signal processing (see e.g. \cite{RefWorks:45,RefWorks:373,ahmed2014compressive,zhang2013hyperspectral,gross2010quantum,candes2011robust}) and data science (see e.g. \cite{basri2003lambertian,candes2009exact}). This reduction is possible even on large dimensional objects when the class of such objects possesses some sort of lower dimensional intrinsic structure. For example, in classical compressed sensing \cite{RefWorks:45,RefWorks:373} and its related streaming applications \cite{charikar2002finding,cormode2005s,gilbert2008group,iwen2014compressed}, the signals of interest are \textit{sparse} vectors -- vectors whose entries are mostly zero. In matrix recovery \cite{candes2009exact,recht2010guaranteed}, one often analogously assumes that the underlying matrix is low-rank. Under such models, tools like the Johnson-Lindenstrauss lemma \cite{johnson1984extensions,achlioptas2003database,dasgupta2010sparse,krahmer2011new,larsen2017optimality}
 and the related restricited isometry property \cite{RefWorks:48,baraniuk2008simple} ask that the geometry of the signals be preserved after projection into a lower dimensional space. Typically, such projections are obtained via random linear maps that map into a dimension much smaller than the ambient dimension of the domain; $s$-sparse $n$-dimensional vectors can be projected into a dimension that scales like $s\log(n)$, and $n\times n$ rank-$r$ matrices can be recovered from $O(rn)$ linear measurements \cite{RefWorks:45,RefWorks:373,candes2009exact}. 
 Then, inference tasks or reconstruction can be performed from those lower dimensional representations. 
 
Here, our focus is on dimension reduction of \textit{tensors}, multi-way arrays that appear in an abundance of large-scale applications ranging from video and longitudinal imaging \cite{liu2012tensor,bengua2017efficient} to machine learning \cite{romera2013multilinear,vasilescu2005multilinear} and differential equations \cite{beck2000multiconfiguration,lubich2008quantum}. Although a natural extension beyond matrices, their complicated structure leads to challenges both in defining low dimensional structure as well as dimension reduction projections. In particular, there are many notions of tensor rank, and various techniques exist to compute the corresponding decompositions \cite{kolda2009tensor,zare2018extension}. In this paper, we focus on tensors with low CP-rank, tensors that can be written as a sum of a few rank-1 tensors written as outer products of basis vectors. The CP-rank and CP-decompositions are natural extensions of matrix rank and SVD, and are well motivated by applications such as topic modeling, psychometrics, signal processing, linguistics and many others  \cite{carroll1970analysis,harshman1970foundations,anandkumar2014tensor}.

\subsection{Tensor dimension reduction}
Although there are now some nice results for low-rank tensor dimension reduction, the majority of the work (see e.g. \cite{rauhut2017low, li2017near, tsitsikas2018core, wang2015fast}) gives theoretical guarantees for dimensional reducing projections that act on tensors via their matricizations or vectorizations. Two prominent examples are Tensor Random Projections TRP algorithm (\cite{sun2018tensor}), which is based on the Khatri-Rao product of many smaller random projection maps,
and TensorSketch (\cite{pagh2013compressed, pham2013fast}), which is based on the tensorisation of the CountSketch matrix approach (\cite{charikar2004finding}). However, as mentioned above, these methods do not respect the multi-modal structure of the tensor (one newer version of TensorSketch that actually does that is based on Tucker format \cite{shi2020higher}), and the theoretical guarantees are not as general as it would be desired: TRP was proved only for tensors of order $2$, and the TensorSketch method is mostly applicable to polynomial kernels, that is, a very special case of rank-one tensors when all the component vectors are copies of the same vector (e.g., \cite{pham2013fast, avron2014subspace, ahle2020oblivious}).

There are many motivating application areas that utilize efficient tensor dimension reduction, including the acceleration and improvement of machine learning algorithms (\cite{pham2013fast, li2017near, romera2013multilinear, vasilescu2005multilinear}) and finding tensor decompositions (an extensive review of the tensor dimension reduction techniques for low-rank tensor decompositions is given in \cite{malik2018low}).  Other practical applications range from video and longitudinal imaging \cite{liu2012tensor,bengua2017efficient} to differential equations \cite{beck2000multiconfiguration,lubich2008quantum}.

Here, our goal is to provide theoretical guarantees but for projections that act directly on the tensors themselves without the need for unfolding or vectorization. In particular, this means the projections can be defined \textit{modewise} using the CP-decomposition, and that the low dimensional representations are also tensors, not vectors. This extends the application for such embeddings to those that cannot afford to perform unfoldings or for which it is not natural to do so. In particular, for tensors in $\mathbb{C}^{n^d}$ for large $n$ and $d$, this avoids having to store an often impossibly large $m\times n^d$ linear map. In the next section, we elaborate on our main contributions. 

We also would like to acknowledge several papers that appeared during the latest stages of preparation of this work and its initial review process. These include the theoretical guarantees for the TRP method for low-rank CP and TT tensors (\cite{rakhshan2020tensorized}), new and considerably more efficient algorithm to
compute a linear sketch polynomial kernels (\cite{ahle2020oblivious}), and, finally, two works that are most related to our current paper, \cite{jin2019faster, malik2019guarantees}, showing that KFJLT (special modewise operator based on FFT matrices, see \eqref{equ:FastJLWard}) performs Johnson-Lindenstrauss type transform. The first result is more general than the second as it is applied to any tensors, and the latter one is applicable only to rank-one tensors, and the efficiency of the compression obtained in these two works is incompatible: the first one has better dependence on the dimensions of the original tensor, and the latter one has better dependence on the distortion allowed. The second part of our work uses the result of \cite{jin2019faster} to get an ultimate better result, so further discussion is continued in Section~\ref{sec:main_res2}. A very nice comparison between these recent results (including our work) is also presented in \cite{malik2019guarantees}.

\subsection{Our contributions}
In this paper we analyze modewise tensor embedding strategies for general $d$-mode tensors.
In particular, herein we focus on obliviously embedding an apriori unknown $r$-dimensional subspace of a given tensor product space $\mathbbm{C}^{n_1 \times \dots \times n_d}$ into a similarly low-dimensional vector space $\mathbbm{C}^{\tilde{\mathcal{O}}(r)}$ with high probability.  In contrast to the standard approach of effectively vectorizing the tensor product space and then embedding the resulting transformed subspace using standard JL methods involving a single massive $\tilde{\mathcal{O}}(r) \times \prod^d_{j=1} n_j$ matrix ${\bf M}$ (see, e.g., \cite{li2017near}), the approaches considered herein instead result in the need to generate and store $d+1$ significantly smaller matrices ${\bf A} \in \mathbbm{C}^{\tilde{\mathcal{O}}(r) \times \prod^d_{\ell=1} m_\ell}, {\bf A}_1 \in \mathbbm{C}^{m_1 \times n_1}, \dots, {\bf A}_d \in \mathbbm{C}^{m_d \times n_d}$ which are then combined to form a linear embedding operator $L:  \mathbbm{C}^{n_1 \times \dots \times n_d} \rightarrow \mathbbm{C}^{\tilde{\mathcal{O}}(r)}$ via
\begin{equation}
L(\mathcal{X}) := {\bf A} \left(\mathrm{vect}\left( \mathcal{X} \times_1 {\bf A}_1 \dots \times_d {\bf A}_d \right) \right),
\label{equ:GenNewJLOperator}
\end{equation}
where each $\times_j$ is a $j$-mode product (reviewed below in \S \ref{sec:TensorBasics}), and $\mathrm{vect}: \mathbbm{C}^{m_1 \times \dots \times m_d} \rightarrow \mathbbm{C}^{\prod^d_{\ell=1} m_\ell}$ is a trivial vectorization operator. See Figure~\ref{fig:pic_randproj} for an illustration of how the embedding operator $L$ in \eqref{equ:GenNewJLOperator} works in two stages to first map an example $3$-mode input tensor $\mathcal{X}$ to a smaller $3$-mode tensor $\mathcal{Y}$, and then to a compressed vector ${\bf z} = L(\mathcal{X})$.
\begin{figure}
	\centering
	\includegraphics[width=0.6\linewidth]{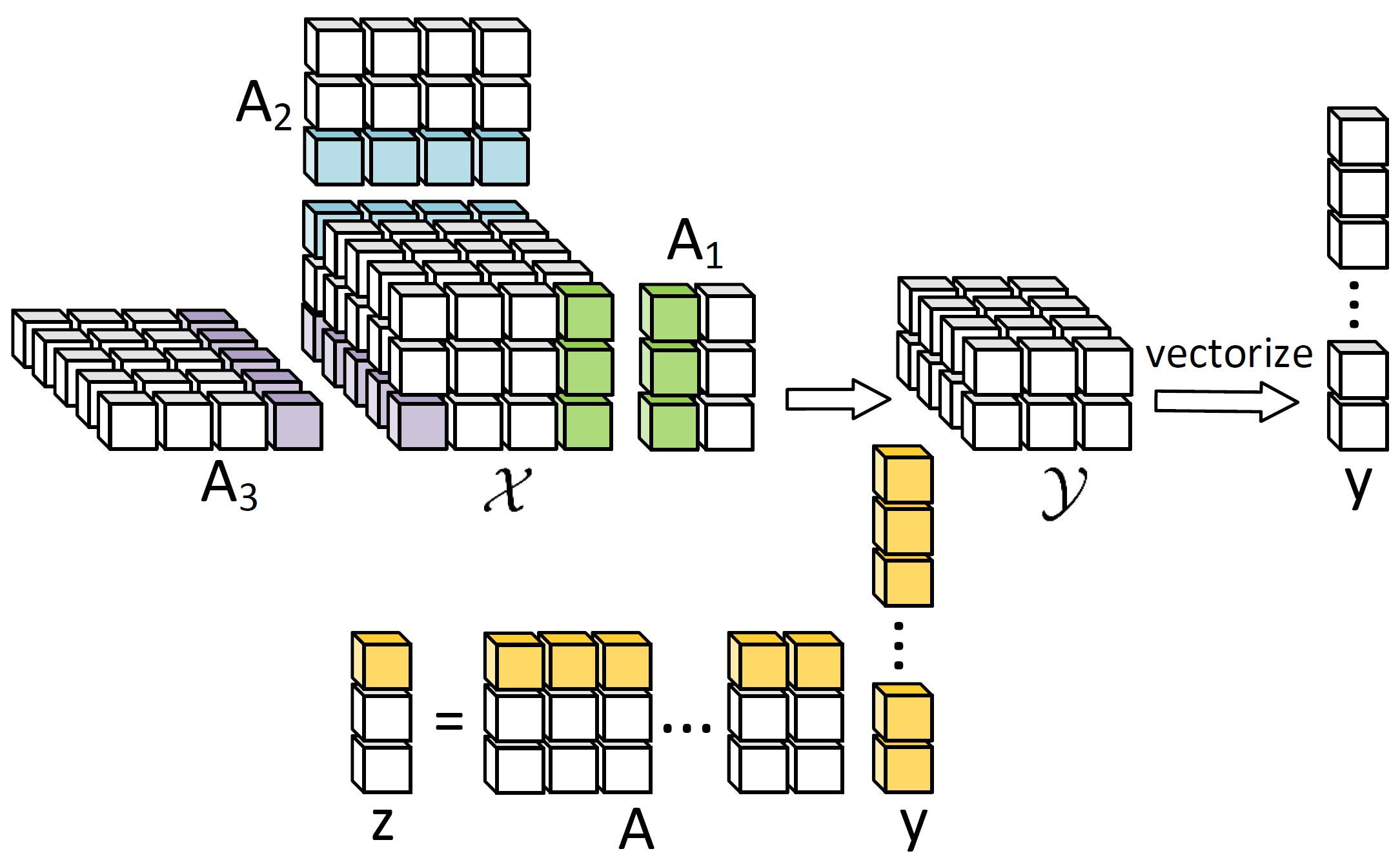}
	\caption{An example of $2$-stage JL embedding applied to a $3$-dimensional tensor $\mathcal{X} \in \mathbb{R}^{3 \times 4 \times 5}$. The output of the $1^{\rm st}$ stage is the projected tensor $\mathcal{Y}=\mathcal{X} \times_1 \mathbf{A}_1 \times_2 \mathbf{A}_2 \times_3 \mathbf{A}_3$, where $\mathbf{A}_j$ are JL matrices for $j \in \{1,2,3\}$, $\mathbf{A}_1 \in \mathbb{R}^{2 \times 3}$, $\mathbf{A}_2 \in \mathbb{R}^{3 \times 4}$, and $\mathbf{A}_3 \in \mathbb{R}^{4 \times 5}$, resulting in $\mathcal{Y} \in \mathbb{R}^{2 \times 3 \times 4}$. Matching colors have been used to show how the rows of $\mathbf{A}_j$ interact with the mode-$j$ fibers of $\mathcal{X}$ (and the intermediate partially compressed tensors) to generate the elements of the mode-$j$ unfolding of the result after each $j$-mode product. Next, the resulting tensor is vectorized (leading to $\mathbf{y} \in \mathbb{R}^{24}$), and a $2^{\rm nd}$-stage JL is then performed to obtain $\mathbf{z}=\mathbf{A}\mathbf{y}$ where $\mathbf{A} \in \mathbb{R}^{3 \times 24}$, and $\mathbf{z} \in \mathbb{R}^{3}$.}
	\label{fig:pic_randproj}
\end{figure}

Let $m' = \tilde{\mathcal{O}}(r)$ be the number of rows one must use for both ${\bf M}$ and ${\bf A}$ above (as we shall see, the number of rows required for both matrices will indeed be essentially equivalent).  The collective sizes of the matrices needed to define $L$ above will be much smaller (and therefore easier to store, transmit, and generate) than ${\bf M}$ whenever $\prod^d_{\ell=1} m_\ell + \sum^d_{\ell=1} n_\ell \left(\frac{m_\ell}{m'}  \right) \ll \prod^d_{j=1} n_j$ holds.  As a result, much of our discussion below will revolve around bounding the dominant $\prod^d_{\ell=1} m_\ell$ term on the left-hand side above, which will also occasionally be referred to as the {\it intermediate embedding dimension} below.  We are now prepared to discuss our two main results.

\subsubsection{General Oblivious Subspace Embedding Results for Low-Rank Tensor Subspaces Satisfying an Incoherence Condition}

The first of our results provides new oblivious subspace embeddings for tensor subspaces spanned by bases of rank-one tensors, as well as establishes related least squares embedding results of value in, e.g., the fitting of a general tensor with an accurate low-rank CPD approximation.  One of its main contributions is the generality with which it allows one to select the matrices ${\bf A}, {\bf A}_1, \dots, {\bf A}_d$ used to construct the JL embedding $L$ in \eqref{equ:GenNewJLOperator}.  In particular, it allows each of these matrices to be drawn independently from any desired nearly-optimal family of JL embeddings (as defined immediately below) that the user likes. 

\begin{defin}[$\epsilon$-JL embedding]
Let $\epsilon \in (0,1)$.  We will call a matrix $\mathbf{A} \in \mathbbm{C}^{m \times n}$ an $\epsilon$-JL embedding of a set $S \subset \mathbbm{C}^n$ into $\mathbbm{C}^m $ if
$$\| \mathbf{A} {\bf x} \|_2^2 = (1 + \epsilon_{\bf x}) \| {\bf x} \|_2^2$$
holds for some $\epsilon_{\bf x} \in (-\epsilon, \epsilon)$ for all ${\bf x} \in S$.  
\end{defin}
\begin{defin}\label{def:EtaOpt}
Fix $\eta \in (0,1/2)$ and let $\left\{ \mathcal{D}_{(m,n)} \right\}_{(m,n) \in \mathbbm{N} \times \mathbbm{N}}$ be a family of probability distributions where each $\mathcal{D}_{(m,n)}$ is a distribution over $m \times n$ matrices.  We will refer to any such family of distributions as being an {\it \bf $\boldsymbol \eta$-optimal family of JL embedding distributions} if there exists an absolute constant $C \in \mathbbm{R}^+$ such that, for any given $\epsilon \in (0,1)$, $m,n \in \mathbbm{N}$ with $m < n$, and nonempty set $\mathcal{S} \subset \mathbbm{C}^{n}$ of cardinality 
$$|S| \leq \eta \exp \left(\frac{\epsilon^2 m}{C} \right),$$ 
a matrix ${\bf A} \sim \mathcal{D}_{(m,n)}$ will be an $\epsilon$-JL embedding of $\mathcal{S}$ into $\mathbbm{C}^m$ with probability at least $1 - \eta$.
\end{defin}
In fact many $\eta$-optimal families of JL embedding distributions exist for any given $\eta \in (0,1/2)$ including, e.g., those associated with random matrices having i.i.d. subgaussian entries (see Lemma 9.35 in \cite{foucart2013book}) as well as those associated with sparse JLT constructions \cite{kane2014sparser}.  The next theorem proves that any desired combination of such matrices can be used to construct a JL embedding $L$ as per \eqref{equ:GenNewJLOperator} for any tensor subspace spanned by a basis of rank-one tensors satisfying an easily testable (and relatively mild\footnote{In fact the coherence condition required by Theorem~\ref{Thm:MAINRES1} will be satisfied by a generic basis of rank-one tensors with high probability (see \S\ref{sec:IncoherentBasesareCommon}).  Similar coherence results to those presented in \S\ref{sec:IncoherentBasesareCommon} have also recently been considered for random tensors in more general parameter regimes by Vershynin \cite{vershynin2019concentration}.}) coherence condition. We utilize the notations set forth below in Section \ref{sec:Notations}.

\begin{thm}
Fix $\epsilon, \eta \in (0,1/2)$ and $d \geq 3$.  Let $\mathcal{X} \in \mathbbm{C}^{n_1 \times \dots \times n_d}$, $n := \max_{j} n_j \geq 4r+1$, and $\mathcal{L}$ be an $r$-dimensional subspace of $\mathbbm{C}^{n_1 \times \dots \times n_d}$ spanned by a basis of rank-one tensors $\mathcal{B} := \left \{ \bigcirc^d_{\ell = 1} {\bf y}^{(\ell)}_k ~\big|~ k \in [r] \right\}$ (where $\bigcirc$ denotes a tensor outer product operator -- see \eqref{equ:tens_prod} below) with modewise coherence satisfying 
$$\mu_\mathcal{B}^{d-1} := \left( \max_{\ell \in [d]} \max_{k, h \in [r], k \neq h} \left| \left \langle {\bf y}^{(\ell)}_k , {\bf y}^{(\ell)}_h \right \rangle \right| \right)^{d-1} < 1/{2r}.
$$  
Then, one can construct a linear operator $L: \mathbbm{C}^{n_1 \times \dots \times n_d} \rightarrow \mathbbm{C}^{m'}$ as per \eqref{equ:GenNewJLOperator} with $m' \leq C' r \cdot \epsilon^{-2} \cdot \ln \left( \frac{47}{\epsilon \sqrt[r]{\eta}} \right)$ for an absolute constant $C' \in \mathbbm{R}^+$ so that with probability at least $1 - \eta$ 
\begin{equation}
\left| \left\| L\left( \mathcal{X} - \mathcal{Y} \right) \right\|^2_2 - \left\|  \mathcal{X} - \mathcal{Y} \right\|^2 \right| \leq \epsilon \left\|  \mathcal{X} - \mathcal{Y} \right\|^2
\label{equ:leastsquaresMainres1}
\end{equation}
will hold for all $\mathcal{Y} \in \mathcal{L}$.  

If $\mathcal{X} \notin \mathcal{L}$ the intermediate embedding dimension can be bounded above by 
\begin{equation}
\prod^d_{\ell=1} m_\ell ~\leq~ C^d \cdot r^d d^{3d}/\epsilon^{2d} \cdot \ln^d \left( n / \sqrt[d]{\eta}\right)
\label{equ:IntermedDimBoundXnotinL}
\end{equation}
for an absolute constant $C \in \mathbbm{R}^+$.  If, however, $\mathcal{X} \in \mathcal{L}$ then \eqref{equ:leastsquaresMainres1} holds for all $r < 1/2\mu_\mathcal{B}^{d-1}$ and \begin{equation}
\prod^d_{\ell=1} m_\ell ~\leq~ \tilde{C}^d \cdot r^2 \left(d / \eps\right)^{2d} \cdot \ln^d \left( 2r^2 d / {\eta}\right)
\label{equ:IntermedDimBoundXinL}
\end{equation}
can be achieved, where $\tilde C \in \mathbbm{R}^+$ is another absolute constant.
\label{Thm:MAINRES1}
\end{thm}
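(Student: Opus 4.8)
The plan is to exploit the factorization $L = {\bf A}\circ\mathrm{vect}\circ\Phi$ of \eqref{equ:GenNewJLOperator}, where $\Phi(\mathcal{Z}) := \mathcal{Z}\times_1{\bf A}_1\cdots\times_d{\bf A}_d$ denotes the modewise contraction, and to reduce the theorem to two separate near-isometry claims. Since $L$ is linear and every tensor $\mathcal{X}-\mathcal{Y}$ with $\mathcal{Y}\in\mathcal{L}$ lies in the subspace $\mathcal{W} := \mathcal{L} + \mathrm{span}\{\mathcal{X}\}$, whose dimension is at most $r+1$ (and exactly $r$ when $\mathcal{X}\in\mathcal{L}$), it suffices to make $L$ an $\epsilon$-subspace embedding of $\mathcal{W}$, i.e. $\|L(\mathcal{Z})\|_2^2 = (1\pm\epsilon)\|\mathcal{Z}\|^2$ for all $\mathcal{Z}\in\mathcal{W}$, which gives \eqref{equ:leastsquaresMainres1} at once. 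I would obtain this from (a) $\Phi$ being an $(\epsilon/3)$-distortion near-isometry on $\mathcal{W}$, and (b) ${\bf A}$ being an $(\epsilon/3)$-subspace embedding of the (at most $(r+1)$-dimensional) image $\mathrm{vect}(\Phi(\mathcal{W}))$. For (b) I fix a $\Theta(\epsilon)$-net of the unit sphere of $\mathrm{vect}(\Phi(\mathcal{W}))$, of cardinality at most $(C/\epsilon)^{r+1}$; once ${\bf A}_1,\dots,{\bf A}_d$ have been drawn this is a fixed finite set, so drawing ${\bf A}$ from an $\eta$-optimal family (Definition~\ref{def:EtaOpt}) makes it an $(\epsilon/6)$-JL embedding of that set with probability at least $1-\eta/2$ as soon as $m' \geq C'r\epsilon^{-2}\ln\big(47/(\epsilon\sqrt[r]{\eta})\big)$, because $\ln\big((C/\epsilon)^{r+1}/\eta\big) = \Theta\big(r\ln(1/(\epsilon\sqrt[r]{\eta}))\big)$; the standard net-to-subspace argument then upgrades this to an $(\epsilon/3)$-subspace embedding, and composing (a) and (b) (noting $\mathrm{vect}$ is an isometry) produces the distortion bound \eqref{equ:leastsquaresMainres1}.

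For step (a) on the rank-one part (which is all of $\mathcal{W}$ when $\mathcal{X}\in\mathcal{L}$), I first rescale each ${\bf y}^{(\ell)}_k$ to a unit vector -- changing neither $\mathcal{L}$ nor $\mu_\mathcal{B}$ -- and draw each ${\bf A}_\ell$ from an $\eta$-optimal family so that it is an $\epsilon_1$-JL embedding of the $O(r^2)$-element set $S_\ell := \{{\bf y}^{(\ell)}_k\}_{k\in[r]} \cup \{{\bf y}^{(\ell)}_k \pm {\bf y}^{(\ell)}_h,\ {\bf y}^{(\ell)}_k \pm i{\bf y}^{(\ell)}_h : k\neq h\}$. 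By polarization this forces $\langle {\bf A}_\ell{\bf y}^{(\ell)}_k, {\bf A}_\ell{\bf y}^{(\ell)}_h\rangle = \langle {\bf y}^{(\ell)}_k, {\bf y}^{(\ell)}_h\rangle + O(\epsilon_1)$ for all $k,h$, and since $\Phi\big(\bigcirc_\ell {\bf y}^{(\ell)}_k\big) = \bigcirc_\ell({\bf A}_\ell{\bf y}^{(\ell)}_k)$ and outer products multiply inner products, the Gram matrix $\widetilde G$ of $\{\Phi(\mathcal{B}_k)\}_{k\in[r]}$ satisfies $\widetilde G_{kk} = 1 + O(d\epsilon_1)$ and $|\widetilde G_{kh}| \leq (\mu_\mathcal{B} + O(\epsilon_1))^{d-1}$ for $k \neq h$; note that only $d-1$ of the $d$ modewise factors need be bounded via coherence here, Cauchy--Schwarz handling the last one, which is exactly why the hypothesis is phrased in terms of $\mu_\mathcal{B}^{d-1}$. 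The assumption $r\mu_\mathcal{B}^{d-1} < 1/2$ now yields, through Gershgorin's theorem, that the Gram matrix $G$ of $\mathcal{B}$ and $\widetilde G$ are both invertible with $\|G^{-1}\|, \|\widetilde G^{-1}\| < 2$ once $\epsilon_1$ is small relative to the positive slack $1/2 - r\mu_\mathcal{B}^{d-1}$; writing $\mathcal{Z} = \sum_k c_k \mathcal{B}_k \in \mathcal{L}$, one then bounds $\left| \|\Phi(\mathcal{Z})\|^2 - \|\mathcal{Z}\|^2 \right| = |{\bf c}^*(\widetilde G - G){\bf c}| \leq \|\widetilde G - G\|\,\|G^{-1}\|\,\|\mathcal{Z}\|^2$, where $\|\widetilde G - G\| \lesssim d\epsilon_1 + r(C\epsilon_1)^d$, so the choice $\epsilon_1 = \Theta\big(\epsilon/(d\,r^{1/d})\big)$ makes the distortion at most $\epsilon/6$. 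Tracking $|S_\ell| = O(r^2)$ and a union bound over the $d$ modes then force $m_\ell \gtrsim \epsilon_1^{-2}\ln(r^2 d/\eta)$ per mode, i.e. \eqref{equ:IntermedDimBoundXinL}; and since this argument relies only on $r\mu_\mathcal{B}^{d-1} < 1/2$, when $\mathcal{X}\in\mathcal{L}$ (so $\mathcal{W} = \mathcal{L}$ and nothing further is needed) \eqref{equ:leastsquaresMainres1} holds for every $r < 1/(2\mu_\mathcal{B}^{d-1})$.

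When $\mathcal{X}\notin\mathcal{L}$, the subspace $\mathcal{W}$ additionally contains the direction $\mathcal{X}^\perp := \mathcal{X} - P_\mathcal{L}\mathcal{X}$, and the Pythagorean identity $\|\mathcal{X}-\mathcal{Y}\|^2 = \|\mathcal{X}^\perp\|^2 + \|P_\mathcal{L}\mathcal{X} - \mathcal{Y}\|^2$ shows that, on top of the previous paragraph, it suffices to have $\Phi$ nearly preserve $\|\mathcal{X}^\perp\|$ and nearly preserve the (vanishing) inner products between $\mathcal{X}^\perp$ and the basis tensors at accuracy $O(\epsilon/\sqrt r)$; combined with the previous paragraph this upgrades to near-isometry of $\Phi$ on all of $\mathcal{W}$. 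Since $\mathcal{X}^\perp$ is a single, possibly full-rank, fixed tensor, I would control it mode by mode: conditioning on ${\bf A}_1,\dots,{\bf A}_{\ell-1}$, one asks ${\bf A}_\ell$ to approximately preserve the Frobenius norm of the current partial contraction $\mathcal{X}^\perp \times_1 {\bf A}_1 \cdots \times_{\ell-1} {\bf A}_{\ell-1}$, which holds once ${\bf A}_\ell$ is an $\epsilon_1$-JL embedding of that contraction's at most $\prod_{j<\ell} m_j \cdot \prod_{j>\ell} n_j \leq n^{d-1}$ mode-$\ell$ fibers, together with an $O(r)$-dimensional subspace that also controls the cross-terms against the basis fibers. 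Because $\ln(n^{d-1}) = (d-1)\ln n$, this makes each $m_\ell$ of order $\epsilon_1^{-2}\, d^{O(1)} \ln(n/\eta)$ with $\epsilon_1 = \Theta(\epsilon)/d^{O(1)}$ chosen to absorb the $d$-fold accumulation of per-mode errors; together with the $O(r^2)$-fiber requirement above, the net of $\mathcal{W}$, and a union bound splitting $\eta$ across the $d+1$ random matrices and that net, one reaches \eqref{equ:IntermedDimBoundXnotinL}. The mild hypothesis $n \geq 4r+1$ is what guarantees these modewise maps can be taken genuinely compressive ($m_\ell < n_\ell$) and that an $r$-element basis of the required incoherence is realizable.

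The routine components of this plan are the Gram-matrix perturbation estimate, the Gershgorin conditioning step, the two net-to-subspace arguments (for ${\bf A}$ and for $\Phi$), and the polarization manipulations. The main obstacle is the third paragraph: bounding the modewise distortion of the single generic tensor $\mathcal{X}^\perp$ is the one place where $O(r^2)$-sized witness sets do not suffice, so it demands a genuine mode-by-mode (martingale-type) concentration argument together with a union bound over a family of intermediate fibers that itself depends on the previously drawn random matrices, and the final construction is only as sharp as one's care in jointly calibrating the per-mode distortion $\epsilon_1$, the net fineness $\Theta(\epsilon)$, and the split of the failure budget $\eta$ across the $d+1$ matrices and the net so that the composed map has distortion at most $\epsilon$ with failure probability at most $\eta$.
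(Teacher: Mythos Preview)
Your approach is essentially identical to the paper's: it too factors $L={\bf A}\circ\mathrm{vect}\circ\Phi$, uses the Pythagorean decomposition of $\mathcal{X}-\mathcal{Y}$ with cross-term control at accuracy $O(\epsilon/\sqrt{r})$ (Theorem~\ref{thm:SubspaceEmbedResult}), establishes the rank-one subspace distortion via the same $O(r^2)$-element polarization sets and coherence tracking (Theorem~\ref{thm:ObliviousSubspaceEmbedded} and Corollary~\ref{cor:MainOblEmb}, which is a mode-by-mode induction rather than your one-shot Gram-matrix comparison, but with the same inputs and output), handles $\mathcal{X}^\perp$ via the mode-by-mode fiber argument you describe (Lemma~\ref{lem:IndivTensorJL}), and invokes the same net argument for ${\bf A}$ (Lemma~\ref{lem:simplenetsubspace}). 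One small correction: the hypothesis $n\geq 4r+1$ is not about compressiveness or realizability of incoherent bases, but is used so that the $p=4r+1$ tensors in $\mathcal{S}'\cup\{\mathbbm{P}_{\mathcal{L}^\perp}(\mathcal{X})\}$ satisfy $p\cdot n^{d-1}\leq n^d$, allowing the fiber-set cardinalities in Lemma~\ref{lem:IndivTensorJL} to be absorbed into the single $\ln(n/\sqrt[d]{\eta})$ factor appearing in \eqref{eq:rDepTwo}.
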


\textit{Proof Sketch for Theorem~\ref{Thm:MAINRES1}.}  
This is largely a restatement of Theorem~\ref{thm:GenmodewiseConstruction}.  When defining $L: \mathbbm{C}^{n_1 \times \dots \times n_d} \rightarrow \mathbbm{C}^{m'}$ as per \eqref{equ:GenNewJLOperator} following Theorem~\ref{thm:GenmodewiseConstruction} one should draw ${\bf A}_j \in \mathbbm{C}^{m_j \times n_j}$ with 
$m_j \geq C_j \cdot r d^3/\epsilon^2 \cdot \ln \left( n / \sqrt[d]{\eta}\right)$ {from} an $(\eta/4d)$-optimal family of JL embedding distributions for each $j \in [d]$, where each $C_j \in \mathbbm{R}^+$ is an absolute constant.  Furthermore, ${\bf A} \in \mathbbm{C}^{m' \times \prod^d_{\ell=1} m_\ell}$ should be drawn from an $(\eta/2)$-optimal family of JL embedding distributions with $m'$ as above.  The probability bound together with \eqref{equ:IntermedDimBoundXnotinL} both then follow.  The achievable intermediate embedding dimension when $\mathcal{X} \in \mathcal{L}$ in \eqref{equ:IntermedDimBoundXinL} can be obtained from Theorem~\ref{cor:MainOblEmb} since the bound $\prod^d_{\ell=1} m_\ell ~\leq~ \prod^d_{\ell=1} \tilde C_\ell \cdot r^{2/d} d^2/\eps^2 \cdot \ln \left( 2r^2 d / {\eta}\right)$ can then be utilized in that case.\hfill$\Box$\\

One can vectorize the tensors and tensor spaces considered in Theorem~\ref{Thm:MAINRES1} using variants of \eqref{equ:KronModenFlat} to achieve subspace embedding results for subspaces spanned by basis vectors with special Kronecker structure as considered in, e.g., two other recent papers that appeared during the preparation of this manuscript \cite{jin2019faster,malik2019guarantees}.  The most recent of these papers also produces bounds on what amounts to the intermediate embedding dimension of a JL subspace embedding along the lines of \eqref{equ:GenNewJLOperator} when $\mathcal{X} \in \mathcal{L}$ (see Theorem 4.1 in \cite{malik2019guarantees}).  Comparing \eqref{equ:IntermedDimBoundXinL} to that result we can see that Theorem~\ref{Thm:MAINRES1} has reduced the $r$ dependence of the effective intermediate embedding dimension achieved therein from $r^{d+1}$ to $r^2$ (now independent of $d$) for a much more general set of modewise embeddings.  However, Theorem~\ref{Thm:MAINRES1} incurs a worse dependence on epsilon and needs the stated coherence assumption concerning $\mu_\mathcal{B}$ to hold.  As a result, Theorem~\ref{Thm:MAINRES1} provides a large new class of modewise subspace embeddings that will also have fewer rows than those in \cite{malik2019guarantees} for a large range of ranks $r$ provided that $\mu_\mathcal{B}$ is sufficiently small and $\epsilon$ is sufficiently large.

Note further that the form of \eqref{equ:leastsquaresMainres1} also makes Theorem~\ref{Thm:MAINRES1} useful for solving least squares problems of the type encountered while computing approximate CP decompositions for an arbitrary tensor $\mathcal{X} \notin \mathcal{L}$ using alternating least squares methods (see, e.g., \S\ref{sec:CPleastSquaresstuff} for a related discussion as well as \cite{battaglino2018practical} where modewise strategies were shown to work well for solving such problems in practice).  Comparing Theorem~\ref{Thm:MAINRES1} to the recent least squares result of the same kind proven in \cite{jin2019faster} (see Corollary 2.4) we can see that Theorem~\ref{Thm:MAINRES1} has reduced the $r$ dependence of the effective intermediate embedding dimension achievable in \cite{jin2019faster} from $r^{2d}$ therein to $r^d$ in \eqref{equ:IntermedDimBoundXnotinL} for a much more general set of modewise embeddings.  In exchange, Theorem~\ref{Thm:MAINRES1} again incurs a worse dependence on epsilon and needs the stated coherence assumption concerning $\mu_\mathcal{B}$ to hold, however.  As a result, Theorem~\ref{Thm:MAINRES1} guarantees that a larger class of modewise JL embeddings can be used in least squares applications, and that they will also have smaller intermediate embedding dimensions as long as $\mu_\mathcal{B}$ is sufficiently small and $\epsilon$ sufficiently large.

\subsubsection{Fast Oblivious Subspace Embedding Results for Arbitrary Tensor Subspaces}\label{sec:main_res2}

Our second main result builds on Theorem 2.1 of Jin, Kolda, and Ward in \cite{jin2019faster} to provide improved fast subspace embedding results for arbitrary tensor subspaces (i.e., for low dimensional tensor subspaces whose basis tensors have arbitrary rank and coherence).  Let $N := \prod^d_{j=1} n_j$.  By combining elements of the proof of Theorem~\ref{Thm:MAINRES1} with the optimal $\epsilon$-dependence of Theorem 2.1 in \cite{jin2019faster} we are able to provide a fast modewise oblivious subspace embedding $L$ as per \eqref{equ:GenNewJLOperator} that will simultaneously satisfy \eqref{equ:leastsquaresMainres1} for all $\mathcal{Y}$ in an entirely arbitrary $r$-dimensional tensor subspace $\mathcal{L}$ with probability at least $1 - \eta$ while also achieving an intermediate embedding dimension bounded above by 
\begin{equation}
C^d \left(\frac{r}{\epsilon}\right)^2 \cdot \log^{2d-1} \left( \frac{N}{\eta} \right) \cdot \log^4 \left( \frac{\log \left(\frac{N}{\eta} \right)}{\epsilon} \right) \cdot \log N.
\label{equ:BestIntMedbeDim}
\end{equation}
Above $C > 0$ is an absolute constant.  Note that neither $r$ nor $\epsilon$ in \eqref{equ:BestIntMedbeDim} are raised to a power of $d$ which marks a tremendous improvement over all of the previously discussed results when $d$ is large.  See Theorem~\ref{thm:RachelNewSubspaceEmbed} for details.

As alluded to above, the results herein can also be used to create new JL subspace embeddings in the traditional vector space setting. Our next and final main result does this explicitly for arbitrary vector subspaces by restating a variant of Theorem~\ref{thm:RachelNewSubspaceEmbed} in that context.  We expect that this result may be of independent interest outside of the tensor setting.

\begin{thm}\label{thm2}
Fix $\epsilon, \eta \in (0,1/2)$ and $d \geq 2$.  Let ${\bf x} \in \mathbbm{C}^{N}$ such that $\sqrt[d]{N} \in \mathbbm{N}$ and $N \geq 4C' / \eta > 1$ for an absolute constant $C' > 0$, and let $\mathcal{L}$ be an $r$-dimensional subspace of $\mathbbm{C}^{N}$ for $\max \left(2r^2 - r,4r \right) \leq N$.  Then, one can construct a random matrix ${\bf A} \in \mathbbm{C}^{m \times N}$ with 
\begin{equation}
m ~\leq~ C \left[ r \cdot \epsilon^{-2} \cdot \log \left( \frac{47}{\epsilon \sqrt[r]{\eta}} \right) \cdot \log^4 \left( \frac{r \log \left(\frac{47}{\epsilon \sqrt[r]{\eta}} \right)}{\epsilon} \right) \cdot \log N \right],
\label{equ:FinalEmbeddingDimRachel} 
\end{equation}
for an absolute constant $C > 0$ such that with probability at least $1 - \eta$ it will be the case that
\begin{equation*}
\left| \left\| {\bf A}\left(  {\bf x} - {\bf y} \right) \right\|^2_2 - \left\|  {\bf x} - {\bf y} \right\|^2_2 \right| \leq \epsilon \left\| {\bf x} - {\bf y} \right\|^2_2
\end{equation*}
holds for all ${\bf y} \in \mathcal{L}$.  Furthermore, the matrix ${\bf A}$ requires only
\begin{equation}
\mathcal{O}\left( C_1^d \left(\frac{r}{\epsilon}\right)^2 \cdot \log^{2d-1} \left( \frac{N}{\eta} \right) \cdot \log^4 \left( \frac{\log \left(\frac{N}{\eta} \right)}{\epsilon} \right) \cdot \log^2 N + d \sqrt[d]{N} \right)
\label{equ:FinalNumRandomBitsRachel}
\end{equation}
random bits and memory for storage for an absolute constant $C_1 > 0$, and can be multiplied against any vector in just $\mathcal{O}\left(N \log N \right)$-time.  

Note that choosing ${\bf x} = {\bf 0}$ produces an oblivious subspace embedding result for $\mathcal{L}$, and that choosing $\mathcal{L}$ to be the column space of a rank-$r$ matrix produces a result useful for least squares sketching.
\end{thm}

\textit{Proof Sketch for Theorem~\ref{thm2}.}  
This follows from Theorem~\ref{thm:RachelNewSubspaceEmbed} after identifying $\mathbbm{C}^{N}$ with $\mathbbm{C}^{\sqrt[d]{N} \times \dots \times \sqrt[d]{N}}$, i.e., after effectively reshaping any given vectors ${\bf x}, {\bf y}$ under consideration into $d$-mode tensors $\mathcal{X, Y}$.  Note further that if $\sqrt[d]{N} \notin \mathbbm{N}$ then one can implicitly pad the vectors of interest with zeros until it is (i.e., effectively trivially embedding $\mathbbm{C}^{N}$ into $\displaystyle \mathbbm{C}^{ \left \lceil \sqrt[d]{N} ~\right \rceil^d}$) before preceding.\hfill$\Box$\\

\subsection{Organization} The remainder of the paper is organized as follows. Section \ref{sec:background} provides background and notation for tensors (Subsections~\ref{sec:Notations} and \ref{sec:TensorBasics}), as well as for Johnson-Lindenstrauss embeddings (Subsection~\ref{sec:JLbasics}). 

We start Section \ref{sec:main} with the definitions of the rank of the tensor (and low-rank tensor subspaces) and the maximal modewise coherence of tensor subspace bases. Then we work our way to Theorem~\ref{cor:MainOblEmb} which constructs
oblivious tensor subspace embeddings via modewise tensor products (for any fixed subspace having low enough modewise coherence). This result is very general in terms of JL-embedding maps one can use as building blocks in each mode. Finally, in Subsection~\ref{sec:IncoherentBasesareCommon} we discuss the assumption of modewise incoherence and provide several natural examples of incoherent tensor subspaces.

In Section~\ref{sec:CPleastSquaresstuff}, we describe the fitting problem for approximately low-rank tensors, and explain how modewise dimension reduction (as presented in Section \ref{sec:main}) reduces the complexity of the problem. Then we build the machinery to show that the solution of the reduced problem will be a good solution for the original problem (in Theorem~\ref{thm:GenmodewiseConstruction}). We conclude Section~\ref{sec:CPleastSquaresstuff} by introducing a two-step embedding procedure that allows one to further reduce the final embedding dimension (this is our second main embedding result, Theorem~\ref{thm:RachelNewSubspaceEmbed}). This improved procedure relies on a specific form of JL-embedding of each mode. Both embedding results can be applied to the fitting problem.

In Section \ref{sec:exp} we present some simple experiments confirming our theoretical guarantees, and then we conclude in Section \ref{sec:conc}.

\section{Notation, Tensor Basics, \& Linear Johnson-Lindenstrauss Embeddings}\label{sec:background} 

\label{sec:Notations} 
Tensors, matrices, vectors and scalars are denoted in different typeface for clarity below. Calligraphic boldface capital letters are always used for tensors, boldface capital letters for matrices, boldface lower-case letters for vectors, and regular (lower-case or capital) letters for scalars.  The matrix ${\bf I}$ will always represent the identity matrix.  The set of the the first $d$ natural numbers will be denoted by $[d] := \{1, \dots, d \}$ for all $d \in \mathbb{N}$. 

Throughout the paper, $\otimes$ denotes the Kronecker product of vectors or matrices, 
and $\bigcirc$ denotes the tensor outer product of vectors or tensors.\footnote{As \eqref{equ:tens_prod} suggests, it can be applied to tensors with arbitrary number of modes.} The symbol $\circ$ on the other hand represents the composition of functions (see e.g. Section \ref{sec:CPleastSquaresstuff}).
Numbers in parentheses used as a subscript or superscript on a tensor either denote {\it unfoldings} (introduced in Section \ref{sec:TensorBasics}) when appearing in a subscript, or else an element in a sequence when appearing in a superscript.
The notation $\otimes_{\ell \neq j} {\bf v}^{(\ell)}$ for a given set of vectors $\left \{ {\bf v}^{(\ell)} \right \}^d_{\ell = 1}$ will always denote the vector ${\bf v}^{(d)} \otimes \dots {\bf v}^{(j+1)}\otimes {\bf v}^{(j-1)} \dots \otimes {\bf v}^{(1)}$.  Additional tensor definitions and operations are reviewed below (see, e.g., \cite{kolda2009tensor,de2008tensor,vannieuwenhoven2012new,zare2018extension} for additional details and discussion).

\subsection{Tensor Basics}
\label{sec:TensorBasics}

The set of all $d$-mode tensors $\mathcal{X} \in \mathbbm{C}^{n_1 \times n_2 \times \dots \times n_d}$ forms a vector space over the complex numbers when equipped with component-wise addition and scalar multiplication.  The inner product of $\mathcal{X}, \mathcal{Y} \in \mathbbm{C}^{n_1\times n_2\times ... \times n_{d}}$ will be given by

\begin{equation}
\left\langle \mathcal{X},\mathcal{Y} \right\rangle := \sum_{i_1=1}^{n_1} \sum_{i_2=1}^{n_2} ...\sum_{i_{d}=1}^{n_{d}} \mathcal{X}_{i_1,i_2,...,i_{d}} ~ \overline{\mathcal{Y}_{i_1,i_2,...,i_{d}}}.
\end{equation}  
This inner product then gives rise to the standard Euclidean norm 

\begin{equation}
\| \mathcal{X} \| := \sqrt{\left\langle \mathcal{X},\mathcal{X} \right\rangle} = \sqrt{\sum_{i_1=1}^{n_1} \sum_{i_2=1}^{n_2} ...\sum_{i_{d}=1}^{n_{d}} \left| \mathcal{X}_{i_1,i_2,...,i_{d}} \right|^2}.
\end{equation}  
If $\left\langle \mathcal{X},\mathcal{Y} \right\rangle = 0$ we say that $\mathcal{X}$ and $\mathcal{Y}$ are {\it orthogonal}.  If $\mathcal{X}$ and $\mathcal{Y}$ are orthogonal and also have unit norm (i.e., have $\| \mathcal{X} \| = \| \mathcal{Y} \| = 1$) we say that they are {\it orthonormal}.

\bigskip

\noindent {\bf Tensor outer products:} The \textit {tensor outer product} of two tensors $\mathcal{X} \in \mathbbm{C}^{n_1 \times n_2 \times \dots \times n_d}$ and $\mathcal{Y} \in \mathbbm{C}^{n'_1 \times n'_2 \times \dots \times n'_{d'}}$, $\mathcal{X} \bigcirc \mathcal{Y} \in \mathbbm{C}^{n_1 \times n_2 \times \dots \times n_d \times n'_1 \times n'_2 \times \dots \times n'_{d'}}$, is a $(d + d')$-mode tensor whose entries are given by
\begin{equation}
\left( \mathcal{X} \bigcirc \mathcal{Y} \right)_{i_1, \dots, i_d, i'_1, \dots, i'_{d'}} = \mathcal{X}_{i_1,...,i_{d}} \mathcal{Y}_{i'_1, \dots, i'_{d'}}.
\label{equ:tens_prod}
\end{equation}
Note that when $\mathcal{X}$ and $\mathcal{Y}$ are both vectors, the tensor outer product will reduce to the standard outer product. Some additional standard properties are also listed below in Lemma~\ref{lem:ProdProps}.

\bigskip

\noindent {\bf Fibers:} Let tensor $\mathcal{X} \in \mathbbm{C}^{n_1 \times \dots \times n_{j-1} \times n_j \times n_{j+1} \times \dots \times n_d}$. The vectors in $\mathbbm{C}^{n_j}$ obtained by fixing all of the indices of $\mathcal{X}$ except for the one that corresponds to its $j^{\rm th}$ mode are called its {\it mode-$j$ fibers}.  Note that any such $\mathcal{X}$ will have $\prod_{\ell \neq j} n_\ell$ mode-$j$ fibers denoted by $\mathcal{X}_{i_1,\dots,i_{j-1},:,i_{j+1},\dots,i_{d}} \in \mathbbm{C}^{n_j}$.\\

\noindent {\bf Tensor matricization (unfolding):}  The process of reordering the elements of the tensor into a matrix is known as matricization or unfolding. The mode-$j$ matricization of a tensor $\mathcal{X} \in \mathbbm{C}^{n_1\times n_2\times ... \times n_d}$ is denoted as $\mathbf{X}_{(j)} \in \mathbbm{C}^{n_j \times \prod_{m \neq j} n_m}$ and is obtained by arranging $\mathcal{X}$'s mode-$j$ fibers to be the columns of the resulting matrix.\\

\noindent  {\bf $j$-mode products:}  The \textit{$j$-mode product} of a $d$-mode tensor $\mathcal{X} \in \mathbbm{C}^{n_1 \times \dots \times n_{j-1} \times n_j \times n_{j+1} \times \dots \times n_d}$ with a matrix $\mathbf{U} \in \mathbbm{C}^{m_j \times n_j}$ is another $d$-mode tensor $\mathcal{X} \times_j \mathbf{U} \in \mathbbm{C}^{n_1 \times \dots \times n_{j-1} \times m_j \times n_{j+1} \times \dots \times n_d}$.  Its entries are given by 
\begin{equation}
(\mathcal{X} \times_j \mathbf{U})_{i_1, \dots, i_{j-1}, \ell, i_{j+1}, \dots, i_d} = \sum_{i_{j}=1}^{n_j} \mathcal{X}_{i_{1},\dots,i_j,\dots,i_{d}} \mathbf{U}_{\ell,i_{j}}
\label{equ:DefModejProduct}
\end{equation}
for all $(i_1, \dots, i_{j-1}, \ell, i_{j+1}, \dots, i_d) \in [n_1] \times \dots \times [n_{j-1}] \times [m_j] \times [n_{j+1}] \times \dots \times [n_d]$. 
Looking at the mode-$j$ unfoldings of $\mathcal{X} \times_j \mathbf{U}$ and $\mathcal{X}$ one can easily see that their mode-$j$ matricization can be computed as a regular matrix product
\begin{equation}\label{unfolding_via_product}(\mathcal{X} \times_j \mathbf{U})_{(j)} = \mathbf{U} \mathbf{X}_{(j)}
\end{equation}
for all $j \in [d]$. The following simple lemma formally lists several important properties of tensor outer products and mode-wise products.  The proof of Lemma~\ref{lem:ProdProps} can be found in Appendix~\ref{AppPreliminaryProofs}.

\begin{lem}
Let $\mathcal{A}, \mathcal{B} \in \mathbbm{C}^{n_1 \times n_2 \times \dots \times n_d}$, $\mathcal{C}, \mathcal{D} \in \mathbb{C}^{n'_1 \times n'_2 \times \dots \times n'_{d'}}$, $\alpha, \beta \in \mathbbm{C}$, and $\mathbf{U}_\ell, \mathbf{V}_\ell \in \mathbbm{C}^{m_\ell \times n_\ell}$ for all $\ell \in [d]$.  The following four properties hold:
\begin{enumerate}
\item[(i)] $\left(\alpha \mathcal{A} + \beta \mathcal{B} \right) \bigcirc \mathcal{C} ~=~ \alpha \mathcal{A} \bigcirc \mathcal{C} +  \beta \mathcal{B} \bigcirc \mathcal{C} ~=~ \mathcal{A} \bigcirc \alpha \mathcal{C} + \mathcal{B} \bigcirc \beta \mathcal{C}$.\\

\item[(ii)] $\left\langle \mathcal{A} \bigcirc \mathcal{C}, \mathcal{B} \bigcirc \mathcal{D} \right\rangle ~=~ \left\langle \mathcal{A}, \mathcal{B}\right\rangle \left\langle \mathcal{C}, \mathcal{D}\right\rangle.$ \\

\item[(iii)] $\left( \alpha \mathcal{A} + \beta \mathcal{B} \right) \times_j \mathbf{U}_j = \alpha \left( \mathcal{A} \times_j \mathbf{U}_j \right) + \beta \left( \mathcal{B} \times_j \mathbf{U}_j \right)$.\\

\item[(iv)] $\mathcal{A} \times_j  \left( \alpha \mathbf{U}_j + \beta \mathbf{V}_j \right) = \alpha \left( \mathcal{A} \times_j \mathbf{U}_j \right) + \beta \left( \mathcal{A} \times_j \mathbf{V}_j \right)$. \\

\item[(v)] If $j \neq \ell$ then $\mathcal{A} \times_j \mathbf{U}_j \times_\ell \mathbf{V}_\ell = \left( \mathcal{A} \times_j \mathbf{U}_j \right) \times_\ell \mathbf{V}_\ell = \left( \mathcal{A} \times_\ell \mathbf{V}_\ell \right) \times_j \mathbf{U}_j = \mathcal{A} \times_\ell \mathbf{V}_\ell \times_j \mathbf{U}_j$ .\\

\item[(vi)] If $W \in \mathbb{C}^{p \times m_j}$ then $\mathcal{A} \times_j \mathbf{U}_j \times_j \mathbf{W} = \left( \mathcal{A} \times_j \mathbf{U}_j \right) \times_j \mathbf{W} = \mathcal{A} \times_j \left( \mathbf{W}\mathbf{U}_j \right) = \mathcal{A} \times_j \mathbf{W}\mathbf{U}_j $.
\end{enumerate}
\label{lem:ProdProps}
\end{lem}

\bigskip

A generalization of the observation~\eqref{unfolding_via_product} is available: unfolding the tensor
\begin{equation}
\mathcal{Y}= \mathcal{X}\times_1 {\bf U}^{(1)}\times_2 {\bf U}^{(2)}...\times_d {\bf U}^{(d)} =: {\displaystyle \mathcal{X} \bigtimes_{j=1}^d {\bf U}^{(j)}},
\label{equ:jmodeprod}
\end{equation} 
along the $j^{\rm th}$ mode is equivalent to 
\begin{equation}
{\bf Y}_{(j)} = {\bf U}^{(j)} {\bf X}_{(j)}\left({\bf U}^{(d)} \otimes \dots {\bf U}^{(j+1)}\otimes {\bf U}^{(j-1)} \dots \otimes {\bf U}^{(1)} \right)^\top,
\label{equ:KronModenFlat}
\end{equation}
where $\otimes$ is the matrix Kronecker product (see \cite{kolda2009tensor}).  In particular, \eqref{equ:KronModenFlat} implies that the matricization $\left( \mathcal{X} \times_j {\bf U}^{(j)} \right)_{(j)} = {\bf U}^{(j)} {\bf X}_{(j)}$.\footnote{Simply set ${\bf U}^{(m)} = {\bf I}$ (the identity) for all $m \neq n$ in \eqref{equ:KronModenFlat}.  This fact also easily follows directly from the definition of the $j$-mode product.}
On a related note, one can also express the relation between the vectorized forms of $\mathcal{X}$ and $\mathcal{Y}$ in \eqref{equ:jmodeprod} as
\begin{equation}
\text{vect}\left(\mathcal{Y}\right)=\left({\bf U}^{(d)} \otimes \dots \otimes {\bf U}^{(1)} \right) \text{vect}\left(\mathcal{X}\right),
\label{equ:jmodeprod_vect}
\end{equation}
\noindent where vect$\left( \cdot \right)$ is the vectorization operator.

Finally, it is also worth noting that trivial inner product preserving isomorphisms exist between a tensor space $\mathbbm{C}^{n_1\times n_2\times ... \times n_d}$ and any of its matricized versions (i.e., mode-$j$ matricization can be viewed as an isomorphism between the original tensor vector space $\mathbbm{C}^{n_1\times n_2\times ... \times n_d}$ and its mode-$j$ matricized target vector space $\displaystyle \mathbbm{C}^{n_j \times \prod_{m \neq j} n_m}$).  In particular, the process of matricizing tensors is linear.  If, for example, $\mathcal{X}, \mathcal{Y} \in \mathbbm{C}^{n_1\times n_2\times ... \times n_d}$ then one can see that the mode-$j$ matricization of $\mathcal{X} + \mathcal{Y} \in \mathbbm{C}^{n_1\times n_2\times ... \times n_d}$ is $\left( \mathcal{X} + \mathcal{Y} \right)_{(j)} = {\bf X}_{(j)} + {\bf Y}_{(j)}$ for all modes $j \in [d]$.  

\subsection{Linear Johnson-Lindenstrauss Embeddings}\label{sec:JLbasics}

Many linear $\epsilon$-JL embedding matrices exist \cite{johnson1984extensions,achlioptas2003database,dasgupta2010sparse,krahmer2011new,larsen2017optimality} with the best achievable $m = \mathcal{O}(\log\left(|S| \right) / \epsilon^2)$ for arbitrary $S$ (see \cite{larsen2017optimality} for results concerning the optimality of this embedding dimension).  Of course, one can define JL embedding on tensors in a similar way, namely, as linear maps approximately preserving tensor norm:

\begin{defin}[Tensor $\epsilon$-JL embedding]
A linear operator $L: \mathbbm{C}^{n_1\times n_2\times ... \times n_d} \rightarrow \mathbbm{C}^{m_1 \times \dots \times m_{d'}}$ is an $\epsilon$-JL embedding of a set $S \subset \mathbbm{C}^{n_1\times n_2\times ... \times n_d}$ into $\mathbbm{C}^{m_1 \times \dots \times m_{d'}}$ if
$$\left\| L \left({\mathcal X} \right) \right\|^2 = (1 + \epsilon_{\mathcal{X}}) \left\| {\mathcal X} \right\|^2$$
holds for some $\epsilon_{\mathcal{X}} \in (-\epsilon, \epsilon)$ for all $ {\mathcal X} \in S$.  
\end{defin}

It is easy to check that JL embeddings can preserve pairwise inner products.  The proof of the following Lemma~\ref{lem:InnProdJL} can be found in Appendix~\ref{AppPreliminaryProofs}.

\begin{lem}
Let ${\bf x},{\bf y} \in \mathbbm{C}^n$ and suppose that $\mathbf{A} \in \mathbbm{C}^{m \times n}$ is an $\epsilon$-JL embedding of the vectors
$$\left\{ {\bf x} - {\bf y}, {\bf x} + {\bf y}, {\bf x} - \mathbbm{i}{\bf y}, {\bf x} + \mathbbm{i}{\bf y}  \right\} \subset \mathbbm{C}^{n}$$
into $\mathbbm{C}^{m}$.  Then,
$$\left| \left\langle \mathbf{A}{\bf x},~ \mathbf{A}{\bf y} \right\rangle - \left\langle {\bf x},~ {\bf y}\right\rangle \right| ~\leq~ 2 \epsilon \left(  \| {\bf x} \|^2_2 + \| {\bf y} \|^2_2 \right) ~\leq~ 4 \epsilon \cdot \max \left\{ \| {\bf x} \|_2^2, \| {\bf y} \|_2^2 \right\}.$$

Moreover, if ${\mathcal X},{\mathcal Y} \in \mathbbm{C}^{n_1\times n_2\times ... \times n_d}$ and suppose that $L$ is an $\epsilon$-JL embedding of the tensors
$$\left\{ {\mathcal X} - {\mathcal Y}, {\mathcal X} + {\mathcal Y}, {\mathcal X} - \mathbbm{i}{\mathcal Y}, {\mathcal X}+ \mathbbm{i}{\mathcal Y} \right\} \subset \mathbbm{C}^{n_1\times n_2\times ... \times n_d}$$
into $\mathbbm{C}^{m_1 \times \dots \times m_{d'}}$.  Then,
$$\left| \left\langle L\left({\mathcal X}\right),~ L\left({\mathcal Y}\right) \right\rangle - \left\langle {\mathcal X},~ {\mathcal Y}\right\rangle \right| ~\leq~ 2 \epsilon \left(  \| {\mathcal X} \|^2 + \| {\mathcal Y} \|^2 \right) ~\leq~ 4 \epsilon \cdot \max \left\{ \| {\mathcal X} \|^2, \| {\mathcal Y} \|^2 \right\}.$$
\label{lem:InnProdJL}
\end{lem}


\bigskip

In the case where a more general set $S$ is embedded using JL embeddings, for example, a low-rank subspace of tensors, in order to pass to a smaller finite set, a discretization technique can be used. Due to linearity, it actually suffices to discretize the unit ball of the space in question.
In the next lemma we present a simple subspace embedding result based on a standard covering argument (see, e.g., \cite{baraniuk2008simple,foucart2013book}).  We include its proof in Appendix~\ref{AppPreliminaryProofs} for the sake of completeness. 

\begin{lem}
Fix $\epsilon \in (0,1)$.  Let $\mathcal{L}$ be an $r$-dimensional subspace of $\mathbbm{C}^{n}$, and let $\mathcal{C} \subset \mathcal{L}$ be an $(\epsilon/16)$-net of the $(r-1)$-dimensional Euclidean unit sphere $\mathcal{S}_{\ell^2} \subset \mathcal{L}$.  Then, if ${\bf A} \in \mathbbm{C}^{m \times n}$ is an $(\epsilon/2)$-JL embedding of $\mathcal{C}$ it will also satisfy
\begin{equation}
(1-\epsilon) \| {\bf x} \|^2_2 \leq \| {\bf A} {\bf x}\|^2_2 \leq (1 + \epsilon) \| {\bf x} \|_2^2
\label{equ:subspacepres}
\end{equation}
for all ${\bf x} \in \mathcal{L}$. Furthermore, we note that there exists an  $(\epsilon/16)$-net such that $\left| \mathcal{C} \right| \leq \left(\frac{47}{\epsilon} \right)^r$.\label{lem:simplenetsubspace}
\end{lem}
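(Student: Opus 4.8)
The plan is the standard two-part covering argument: (i) produce a net $\mathcal{C}$ of the requisite size by a volumetric packing bound, and (ii) bootstrap the distortion estimate from $\mathcal{C}$ to all of $\mathcal{L}$ using the linearity of $\mathbf{A}$ together with a supremum device that avoids any iterated net refinement.

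For part (i), take $\mathcal{C}$ to be a maximal $(\epsilon/16)$-separated subset of $\mathcal{S}_{\ell^2}$; maximality forces every point of $\mathcal{S}_{\ell^2}$ to lie within $\epsilon/16$ of $\mathcal{C}$, so $\mathcal{C}$ is an $(\epsilon/16)$-net. Since the balls of radius $\epsilon/32$ about the points of $\mathcal{C}$ are pairwise disjoint and all contained in the radius-$(1+\epsilon/32)$ ball of the ($r$-dimensional) space $\mathcal{L}$, comparing volumes gives $|\mathcal{C}| \le \left(\frac{1+\epsilon/32}{\epsilon/32}\right)^{r} = \left(1+\tfrac{32}{\epsilon}\right)^{r} \le \left(\tfrac{47}{\epsilon}\right)^{r}$ for $\epsilon \in (0,1)$; this is the usual net bound (see e.g. \cite{baraniuk2008simple,foucart2013book}).

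For part (ii), set $A_\ast := \sup_{{\bf x} \in \mathcal{S}_{\ell^2}} \|\mathbf{A}{\bf x}\|_2$, which is finite since $\mathcal{S}_{\ell^2}$ is compact. Fix ${\bf x} \in \mathcal{S}_{\ell^2}$ and pick ${\bf q} \in \mathcal{C}$ with $\|{\bf x}-{\bf q}\|_2 \le \epsilon/16$. Because $\mathbf{A}$ is an $(\epsilon/2)$-JL embedding of $\mathcal{C}$ and $\|{\bf q}\|_2 = 1$ we have $\|\mathbf{A}{\bf q}\|_2 \le \sqrt{1+\epsilon/2}$, while $({\bf x}-{\bf q})/\|{\bf x}-{\bf q}\|_2 \in \mathcal{S}_{\ell^2}$ (here we use that $\mathcal{L}$ is a subspace) gives $\|\mathbf{A}({\bf x}-{\bf q})\|_2 \le A_\ast\|{\bf x}-{\bf q}\|_2 \le A_\ast\epsilon/16$. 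The triangle inequality yields $\|\mathbf{A}{\bf x}\|_2 \le \sqrt{1+\epsilon/2}+A_\ast\epsilon/16$; taking the supremum over ${\bf x} \in \mathcal{S}_{\ell^2}$ and rearranging gives $A_\ast \le \sqrt{1+\epsilon/2}\,/\,(1-\epsilon/16)$, from which a short scalar estimate valid on $(0,1)$ gives $A_\ast^2 \le 1+\epsilon$. For the matching lower bound, apply the reverse triangle inequality to the same decomposition, $\|\mathbf{A}{\bf x}\|_2 \ge \|\mathbf{A}{\bf q}\|_2-\|\mathbf{A}({\bf x}-{\bf q})\|_2 \ge \sqrt{1-\epsilon/2}-A_\ast\epsilon/16$, and substitute the bound on $A_\ast$; a second elementary estimate on $(0,1)$ shows the right-hand side squared is at least $1-\epsilon$. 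Finally, \eqref{equ:subspacepres} for a general ${\bf x} \in \mathcal{L}$ follows by homogeneity (apply the sphere bounds to ${\bf x}/\|{\bf x}\|_2$ and multiply through by $\|{\bf x}\|_2^2$), the case ${\bf x}={\bf 0}$ being trivial.

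The only delicate point is constant-chasing: one must verify that the net slack $\epsilon/16$ and the on-net distortion $\epsilon/2$ combine to exactly $1\pm\epsilon$, i.e. that $\tfrac{1+\epsilon/2}{(1-\epsilon/16)^{2}} \le 1+\epsilon$ and $\Big(\sqrt{1-\epsilon/2}-\tfrac{(\epsilon/16)\sqrt{1+\epsilon/2}}{1-\epsilon/16}\Big)^{2} \ge 1-\epsilon$ for all $\epsilon \in (0,1)$, together with $1+\tfrac{32}{\epsilon} \le \tfrac{47}{\epsilon}$. All three are routine; the supremum device in part (ii) is precisely what keeps this bookkeeping light, and I expect these scalar inequalities, rather than anything conceptual, to be the main (and only minor) obstacle.
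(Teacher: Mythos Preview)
Your proposal is correct and follows essentially the same approach as the paper's proof: both define the supremum of $\|\mathbf{A}{\bf x}\|_2$ over the unit sphere in $\mathcal{L}$, bound it via the triangle inequality against a net point plus a self-referential $A_\ast\epsilon/16$ term, solve the resulting inequality for $A_\ast$, and then feed that bound into the reverse triangle inequality for the lower estimate. The only cosmetic difference is that the paper linearizes $\sqrt{1\pm\epsilon/2}$ to $1+\epsilon/4$ and $1-\epsilon/3$ before solving for $A_\ast$ (obtaining $A_\ast\le 1+\epsilon/3$ and $\inf\|\mathbf{A}{\bf x}\|_2\ge 1-5\epsilon/12$), whereas you keep the square roots and verify the squared inequalities directly; your net construction via a maximal separated set and volume comparison is also what the paper's cited reference does.
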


\begin{remark}
We will see later in the text that the cardinality $(47/\eps)^r$ (exponential in $r$) can be too big to produce tensor JL embeddings with optimal embedding dimensions.  In this case one can use a much coarser ``discretization"  to improve the dependence on $r$ based on, e.g., the next lemma. We point out that there is indeed a tradeoff when using an approach like Lemma \ref{lem:Subspaceembed2.0}; instead of controlling the norms of all vectors in a subspace by embedding a cover of the unit ball  as in Lemma \ref{lem:simplenetsubspace}, in Lemma \ref{lem:Subspaceembed2.0} we instead control the norms of all vectors in a subspace by embedding an orthonormal basis that approximately preserves their orthogonality.  The tradeoff is that one needs to preserve the angles between the orthonormal basis vectors quite accurately in order to ensure that all of the vectors in their span also have their norms preserved well as a result.
\end{remark}

\bigskip

With Lemma~\ref{lem:InnProdJL} in hand we are now able to prove a secondary subspace embedding result which, though it leads to suboptimal results in the vector setting, will be valuable for higher mode tensors.

\begin{lem}
Fix $\epsilon \in (0,1)$ and let $\mathcal{L}$ be an $r$-dimensional subspace of $\mathbbm{C}^{n_1 \times \dots \times n_d}$ spanned by a set of $r$ orthonormal basis tensors $\{ \mathcal{T}_k \}_{k \in [r]}$.  If $L$ is an $(\epsilon/4r)$-JL embedding of the $4 {r \choose 2} + r = 2r^2 - r$ tensors
$$\left( \bigcup_{1 \leq h < k \leq r} \left\{ {\mathcal T}_k - {\mathcal T}_h, {\mathcal T}_k + {\mathcal T}_h, {\mathcal T}_k - \mathbbm{i} {\mathcal T}_h,  {\mathcal T}_k + \mathbbm{i} {\mathcal T}_h \right\} \right) \bigcup \left\{ {\mathcal T}_k \right \}_{k \in [r]} \subset \mathcal{L}$$
into $\mathbbm{C}^{m_1 \times \dots \times m_{d'}}$, then
$$\left| \left\| L \left( {\mathcal X} \right) \right\|^2 -  \left\| {\mathcal X} \right\|^2\right|~\leq~ \epsilon\| {\mathcal X} \|^2$$
holds for all ${\mathcal X} \in \mathcal{L}$.
\label{lem:Subspaceembed2.0}
\end{lem}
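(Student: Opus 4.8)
The plan is to expand an arbitrary $\mathcal{X} \in \mathcal{L}$ in the given orthonormal basis, rewrite $\|L(\mathcal{X})\|^2$ as a double sum of inner products $\langle L(\mathcal{T}_k), L(\mathcal{T}_h) \rangle$, and then bound the diagonal terms with the $\epsilon$-JL property applied to the $\mathcal{T}_k$ directly and the off-diagonal terms via Lemma~\ref{lem:GenTenInnProdJL}.

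First I would write $\mathcal{X} = \sum_{k \in [r]} c_k \mathcal{T}_k$ with $c_k \in \mathbbm{C}$, so that orthonormality of $\{\mathcal{T}_k\}$ gives $\|\mathcal{X}\|^2 = \sum_{k \in [r]} |c_k|^2$, while linearity of $L$ and sesquilinearity of the inner product give
\[
\|L(\mathcal{X})\|^2 = \sum_{k, h \in [r]} c_k \overline{c_h} \, \langle L(\mathcal{T}_k), L(\mathcal{T}_h) \rangle .
\]
For the diagonal terms $k = h$, the hypothesis that $L$ is an $(\epsilon/4r)$-JL embedding of $\{\mathcal{T}_k\}_{k \in [r]}$ together with $\|\mathcal{T}_k\| = 1$ yields $\langle L(\mathcal{T}_k), L(\mathcal{T}_k) \rangle = 1 + \epsilon_k$ for some $\epsilon_k$ with $|\epsilon_k| < \epsilon/4r$. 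For the off-diagonal terms $k \neq h$, the hypothesis that $L$ is an $(\epsilon/4r)$-JL embedding of $\{\mathcal{T}_k \pm \mathcal{T}_h,\, \mathcal{T}_k \pm \mathbbm{i}\mathcal{T}_h\}$ lets me apply Lemma~\ref{lem:GenTenInnProdJL} with $\epsilon$ there replaced by $\epsilon/4r$ to obtain
\[
\left| \langle L(\mathcal{T}_k), L(\mathcal{T}_h) \rangle - \langle \mathcal{T}_k, \mathcal{T}_h \rangle \right| \;\leq\; 4 \cdot \tfrac{\epsilon}{4r} \cdot \max\{\|\mathcal{T}_k\|^2, \|\mathcal{T}_h\|^2\} \;=\; \tfrac{\epsilon}{r},
\]
and since $\langle \mathcal{T}_k, \mathcal{T}_h \rangle = 0$ this says $|\langle L(\mathcal{T}_k), L(\mathcal{T}_h) \rangle| \leq \epsilon/r$.

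Combining, the triangle inequality gives
\[
\left| \|L(\mathcal{X})\|^2 - \|\mathcal{X}\|^2 \right| \;\leq\; \sum_{k \in [r]} |c_k|^2 \, |\epsilon_k| + \sum_{k \neq h} |c_k| |c_h| \, |\langle L(\mathcal{T}_k), L(\mathcal{T}_h) \rangle| \;\leq\; \frac{\epsilon}{4r} \|\mathcal{X}\|^2 + \frac{\epsilon}{r} \sum_{k \neq h} |c_k| |c_h|.
\]
By Cauchy--Schwarz, $\sum_{k \neq h} |c_k||c_h| = \big( \sum_k |c_k| \big)^2 - \sum_k |c_k|^2 \leq (r-1)\sum_k |c_k|^2 = (r-1)\|\mathcal{X}\|^2$, so the right-hand side is at most $\big( \tfrac{1}{4r} + \tfrac{r-1}{r} \big) \epsilon \|\mathcal{X}\|^2 = \big( 1 - \tfrac{3}{4r} \big) \epsilon \|\mathcal{X}\|^2 \leq \epsilon \|\mathcal{X}\|^2$, which is the claimed bound. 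The cardinality count is immediate: there are $4\binom{r}{2}$ tensors of the form $\mathcal{T}_k \pm \mathcal{T}_h$ and $\mathcal{T}_k \pm \mathbbm{i}\mathcal{T}_h$ over the $\binom{r}{2}$ unordered pairs, plus the $r$ tensors $\mathcal{T}_k$, for a total of $2r^2 - r$.

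I do not expect a genuine obstacle here: the argument is a triangle-inequality bookkeeping computation built on Lemma~\ref{lem:GenTenInnProdJL}. The only point requiring care is checking that the particular choice $\epsilon/4r$ in the JL hypothesis is tight enough — one needs the (up to) $r-1$ cross-term contributions per index, each of size $\epsilon/r$, together with the $r$ diagonal-error contributions, each of size $\epsilon/4r$, to still sum to at most $\epsilon$; this is exactly what the Cauchy--Schwarz step $(\sum_k |c_k|)^2 \leq r \sum_k |c_k|^2$ delivers by converting the $1/r$ in the off-diagonal bound into an $O(1)$ total.
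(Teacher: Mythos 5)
Your proof is correct and follows essentially the same route as the paper's: expand $\mathcal{X}$ in the orthonormal basis, use Lemma~\ref{lem:GenTenInnProdJL} to bound the perturbed inner products $\left|\left\langle L(\mathcal{T}_k), L(\mathcal{T}_h)\right\rangle - \left\langle \mathcal{T}_k, \mathcal{T}_h\right\rangle\right|$ by $\epsilon/r$, and finish with a Cauchy--Schwarz estimate on the resulting quadratic form. The only cosmetic difference is that you treat the diagonal terms separately with the sharper bound $\epsilon/4r$ and apply Cauchy--Schwarz in the form $\left(\sum_k |c_k|\right)^2 \leq r \sum_k |c_k|^2$, whereas the paper bounds all $\epsilon_{k,h}$ uniformly by $\epsilon/r$ and applies Cauchy--Schwarz to the inner sum; both yield the stated bound.
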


\begin{proof}
Appealing to Lemma~\ref{lem:InnProdJL} we can see that $|\epsilon_{k,h}| := \left| \left \langle L \left( {\mathcal T}_k \right), L \left( {\mathcal T}_h \right) \right \rangle - \left \langle {\mathcal T}_k, {\mathcal T}_h\right \rangle \right| ~\leq~ \epsilon/r$ for all $h,k \in [r]$.  As a consequence, we have for any ${\mathcal X}~=~\sum^r_{k=1} \alpha_k {\mathcal T}_k \in \mathcal{L}$ that
\begin{align*}
\left| \left\| L \left( {\mathcal X} \right) \right\|^2 -  \left\| {\mathcal X} \right\|^2\right| ~&=~ \left| \sum^r_{k=1} \sum^r_{h=1} \alpha_k \overline{\alpha_h} \left(\left \langle L \left( {\mathcal T}_k \right),~L\left( {\mathcal T}_h \right) \right \rangle -  \left \langle  {\mathcal T}_k,~ {\mathcal T}_h \right \rangle \right)\right| ~=~ \left| \sum^r_{k=1} \sum^r_{h=1} \alpha_k \overline{\alpha_h} \epsilon_{k,h}\right|\\
&\leq~\sum^r_{k=1} \left| \alpha_k \right| \sum^r_{h=1} \left| \alpha_h \right| |\epsilon_{k,h}| ~\leq~ \sum^r_{k=1} \left| \alpha_k \right| \| {\boldsymbol \alpha} \|_2 \left(\frac{\epsilon}{\sqrt{r}} \right) ~\leq~ \epsilon \| {\boldsymbol \alpha} \|_2^2.
\end{align*}
To finish we now note that $\| {\mathcal X} \|^2 = \| {\boldsymbol \alpha} \|_2^2$ due to the orthonormality of the basis tensors $\{ \mathcal{T}_k \}_{k \in [r]}$.
\end{proof}

\section{Modewise Linear Johnson-Lindenstrauss Embeddings of Low-Rank Tensors}\label{sec:main}\

In this section, we consider low-rank tensor subspace embeddings for tensors with low-rank expansions in terms of rank-one tensors (i.e., for tensors with low-rank CP Decompositions).  Our general approach will be to utilize subspace embeddings along the lines of Lemmas~\ref{lem:simplenetsubspace} and~\ref{lem:Subspaceembed2.0} in this setting.  However, the fact that our basis tensors are rank-one will cause us some difficulties.  Principally, among those difficulties will be our inability to guarantee that we can find an orthonormal, or even fairly incoherent, basis of rank-one tensors that span any particular $r$-dimensional tensor subspace $\mathcal{L}$ we may be interested in below.

Going forward, we will consider the {\it standard form} of a given rank-$r$ $d$-mode tensor defined by 
\begin{equation}
\mathcal{Y} := \sum_{k=1}^r \alpha_k \bigcirc^d_{\ell = 1} {\bf y}^{(\ell)}_k \in \mathbbm{C}^{n_1 \times \dots \times n_d},
\label{equ:rankrTensor}
\end{equation}
where the vectors making up the rank-one basis tensors are normalized so that $\left\| {\bf y}^{(\ell)}_k \right\|_2 = 1$ for all $\ell \in [d]$ and $k \in [r]$. 

Given a set of rank-one tensors spanning a tensor subspace, one can define the coherence of the basis.
\begin{defin}[Modewise coherence of a basis of a rank-one tensors]\label{muBdef}
If a tensor subspace is spanned by a basis of rank-one tensors $\mathcal{B} :=\{\bigcirc^d_{\ell = 1} {\bf y}^{(\ell)}_k\,|\,k \in [r]\}$ with $\left\| {\bf y}^{(\ell)}_k \right\|_2 = 1$ for all $\ell \in [d]$ and $k \in [r]$, we denote the maximum modewise coherence of the basis and the basis coherence by
\begin{equation}\label{mubdef}
\mu_\mathcal{B} ~:=~ \max_{\ell \in [d]} \mu_{\mathcal{B},\ell} \quad \text{ and } \quad \mu'_{\mathcal{B}} := \max_{\substack{k, h \in [r]\\ k \neq h}}  \prod_{\ell = 1}^d \left| \left \langle {\bf y}^{(\ell)}_k,~{\bf y}^{(\ell)}_h \right \rangle \right|,
\end{equation}

\noindent respectively, where $\mu_{\mathcal{B},\ell}:=\max\limits_{\substack{k, h \in [r]\\ k \neq h}} \left| \left \langle {\bf y}^{(\ell)}_k , {\bf y}^{(\ell)}_h \right \rangle \right|$ is the modewise coherence of the basis for $\ell \in [d]$.
\end{defin}

Note that $\mu_\mathcal{B}, \mu'_\mathcal{B} \in [0,1]$ and that $\mu'_\mathcal{B} \le \prod_{\ell=1}^{d}\mu_{\mathcal{B},\ell} \le \mu_\mathcal{B}^d$ always hold.  
Given any tensor $\mathcal{Y}$ in the span of a basis $\mathcal{B}$ of rank-$1$ tensors we will also refer (with some abuse of notation) to its modewise coherence and maximum modewise coherence as being equal to the modewise coherence and maximum modewise coherence of the given basis $\mathcal{B}$ defined in Definition \ref{muBdef}.  That is, we will say that
\begin{equation}\label{mudef}\mu_{\mathcal{Y},\ell}= \mu_{\mathcal{B},\ell} \quad \text{for}~ \ell \in [d], \quad \text{and} \quad \mu_{\mathcal{Y}}= \mu_{\mathcal{B}}
\end{equation}
for all $\mathcal{Y} \in \mathcal{B}$.
Similarly, the basis coherence of any such $\mathcal{Y} \in \mathcal{B}$ will be said to equal the basis coherence also defined in Definition \ref{muBdef}, i.e.,
$\mu_{\mathcal{Y}}' = \mu_{\mathcal{B}}'$.  {\it It should be remembered below, however, that the quantities $\mu_{\mathcal{Y},\ell}$, $\mu_{\mathcal{Y}}$, $\mu_{\mathcal{Y}}'$ always depend on the particular basis $\mathcal{B}$ under consideration}. 

\bigskip
The main result of this section is the following oblivious subspace embedding theorem for low-rank tensors.

\begin{thm}\label{cor:MainOblEmb}
Fix $\delta, \eta \in \left(0,1/2\right)$ and $d \geq 2$. Let $\mathcal{L}$ be an $r$-dimensional subspace of $\mathbbm{C}^{n_1 \times \dots \times n_d}$ spanned by a basis of rank-$1$ tensors $\mathcal{B} := \left \{ \bigcirc^d_{\ell = 1} {\bf y}^{(\ell)}_k ~\big|~ k \in [r] \right\}$ with modewise coherence (as per~\eqref{mubdef}) satisfying $\mu_\mathcal{B}^{d-1} < 1/{2r}$. For each $j \in [d]$ draw ${\bf A}_j \in \mathbbm{C}^{m_j \times n_j}$ with 
\begin{equation}\label{eq:rDepOne}
m_j \geq \tilde C \cdot r^{2/d} d^2/\eps^2 \cdot \ln \left( 2r^2 d / {\eta}\right)
\end{equation}
from an $(\eta/d)$-optimal family of JL embedding distributions, where $\tilde C \in \mathbbm{R}^+$ is an absolute constant. Then with probability at least $1 - \eta$ we have
\begin{equation}\label{eq:main_dist_estimate}
\left| \left\| \mathcal{Y}\times_1 {\bf A}_1 \dots \times_d {\bf A}_d  \right\|^2 - \left\| \mathcal{Y} \right\|^2 \right| \leq \eps \left\|\mathcal{Y} \right\|^2,
\end{equation}
for all $\mathcal{Y} \in \mathcal{L}$.  
\end{thm}

\begin{remark}
Modewise incoherence assumption is necessary for our proof of Theorem~\ref{cor:MainOblEmb}. (Indeed, we initially obtain the upper estimate for the distortion in \eqref{eq:main_dist_estimate} in terms of $\|\alpha\|$ instead of $\|\mathcal{Y}\|$. As suggested by Lemma~\ref{lem:norm_comparison} below, in the case when $\mu_\mathcal{Y}$ is large these two norms can be incompatible.) However, numerical experiments with the coherent model show compatible results even for very coherent tensors. See, e.g., Figure~\ref{fig:JL_norm_synth} and additional relevant discussion in section~\ref{sec:exp}.
\end{remark}
The next subsection presents all the components of the proof of Theorem~\ref{cor:MainOblEmb}, whereas the details of the auxiliary lemmas and propositions are deferred to Appendix~\ref{AppMinSuppProofs}. 

\subsection{Proof of the Oblivious Tensor Subspace Embedding Theorem~\ref{cor:MainOblEmb}} \label{seq:proofof thm3}

The first auxiliary lemma deals with how $j$-mode products can change the standard form and modewise coherence of a given tensor that lies in a tensor subspace spanned by $r$ rank-$1$ tensors.

\begin{lem}
Let $j \in [d]$, ${\bf B} \in \mathbbm{C}^{m \times n_j}$, and $\mathcal{Y} \in \mathbbm{C}^{n_1 \times \dots \times n_d}$ be a rank-$r$ tensor as per \eqref{equ:rankrTensor} such that $\min_{k \in [r]} \left\| {\bf B} {\bf y}^{(j)}_k \right\|_2 > 0$.  Then
$\mathcal{Y}' := \mathcal{Y} \times_j {\bf B}$ can be written in standard form as
$$\mathcal{Y}' = \sum_{k=1}^r \alpha_k{\left\| {\bf B} {\bf y}^{(j)}_k \right\|_2} \left( \left( \bigcirc_{\ell < j} {\bf y}^{(\ell)}_k \right) \bigcirc \frac{{\bf B}{\bf y}^{(j)}_k}{\left\| {\bf B} {\bf y}^{(j)}_k \right\|_2} \bigcirc \left( \bigcirc_{\ell > j}^d {\bf y}^{(\ell)}_k \right) \right).$$
Furthermore, the $j$-mode coherence of $\mathcal{Y}'$ as above will satisfy 
$$\mu_{\mathcal{Y}',j} ~=~ \max_{\substack{k, h \in [r]\\ k \neq h}} \frac{\left| \left \langle {\bf B} {\bf y}^{(j)}_k , {\bf B} {\bf y}^{(j)}_h \right \rangle \right|}{\left\| {\bf B} {\bf y}^{(j)}_k \right\|_2 \left\| {\bf B} {\bf y}^{(j)}_h \right\|_2}$$
so that
\begin{align*}
\mu_{\mathcal{Y}'}
&=~\max\left( \mu_{\mathcal{Y}',j} ~, \max_{\ell \in [d]\setminus \{j \}} \max_{\substack{k, h \in [r]\\ k \neq h}} \left| \left \langle {\bf y}^{(\ell)}_k , {\bf y}^{(\ell)}_h \right \rangle \right| \right).
\end{align*}
\label{lem:NewStandForm}
\end{lem}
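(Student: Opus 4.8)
The plan is to reduce everything to the rank-one case by multilinearity and then verify a single entrywise identity. First I would invoke the linearity of the $j$-mode product in its tensor argument (Lemma~\ref{lem:modeProdProps}~$(\dagger)$) to write
\[
\mathcal{Y}' = \mathcal{Y}\times_j {\bf B} = \sum_{k=1}^r \alpha_k \left( \bigcirc_{\ell=1}^d {\bf y}^{(\ell)}_k \right) \times_j {\bf B},
\]
so it suffices to understand the effect of $\times_j {\bf B}$ on a single rank-one tensor. The key identity is
\[
\left( \bigcirc_{\ell=1}^d {\bf y}^{(\ell)} \right) \times_j {\bf B} = \left( \bigcirc_{\ell < j} {\bf y}^{(\ell)} \right) \bigcirc \left( {\bf B}{\bf y}^{(j)} \right) \bigcirc \left( \bigcirc_{\ell > j}^d {\bf y}^{(\ell)} \right),
\]
which is a direct computation: by \eqref{equ:tens_prod} the $(i_1,\dots,i_d)$ entry of $\bigcirc_{\ell=1}^d {\bf y}^{(\ell)}$ equals $\prod_{\ell=1}^d \left({\bf y}^{(\ell)}\right)_{i_\ell}$, and plugging this into the definition \eqref{equ:DefModejProduct} of the $j$-mode product lets one pull the product of the $\ell\neq j$ coordinates outside the sum over $i_j$, leaving precisely $\left({\bf B}{\bf y}^{(j)}\right)_{\ell'}=\sum_{i_j}\left({\bf y}^{(j)}\right)_{i_j}{\bf B}_{\ell',i_j}$ in the $j$-th slot.

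Combining the two steps gives
\[
\mathcal{Y}' = \sum_k \alpha_k \left( \bigcirc_{\ell<j} {\bf y}^{(\ell)}_k \right) \bigcirc \left( {\bf B}{\bf y}^{(j)}_k\right) \bigcirc \left( \bigcirc_{\ell>j}^d {\bf y}^{(\ell)}_k \right).
\]
Since the hypothesis $\min_k \|{\bf B}{\bf y}^{(j)}_k\|_2 > 0$ forces every $j$-th factor to be nonzero, I would renormalize by writing ${\bf B}{\bf y}^{(j)}_k = \|{\bf B}{\bf y}^{(j)}_k\|_2 \cdot \bigl({\bf B}{\bf y}^{(j)}_k / \|{\bf B}{\bf y}^{(j)}_k\|_2\bigr)$ and pulling the scalar $\|{\bf B}{\bf y}^{(j)}_k\|_2$ out of the outer product by the multilinearity recorded in Lemma~\ref{lem:InnerProdprod}~$(\dagger)$. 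The remaining factors all have unit norm — those with $\ell\neq j$ by the original normalization of $\mathcal{Y}$ in \eqref{equ:rankrTensor}, and the $j$-th one by construction — so the resulting expression is genuinely in standard form, which is the first claim; the basis of rank-one tensors appearing here, call it $\mathcal{B}'$, is the one relative to which the coherences of $\mathcal{Y}'$ are understood.

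For the coherence statements I would simply read off the modewise coherences of $\mathcal{B}'$. For $\ell\neq j$ the $\ell$-th factors of $\mathcal{B}'$ coincide with those of $\mathcal{B}$, so $\mu_{\mathcal{Y}',\ell} = \max_{k\neq h} \left| \left\langle {\bf y}^{(\ell)}_k, {\bf y}^{(\ell)}_h\right\rangle \right| = \mu_{\mathcal{Y},\ell}$; for $\ell=j$ the $j$-th factors are ${\bf B}{\bf y}^{(j)}_k / \|{\bf B}{\bf y}^{(j)}_k\|_2$, and substituting these into the definition of $\mu_{\mathcal{Y}',j}$ and clearing the norms from the inner products gives exactly the displayed formula for $\mu_{\mathcal{Y}',j}$. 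Finally $\mu_{\mathcal{Y}'} = \max_{\ell\in[d]}\mu_{\mathcal{Y}',\ell}$ splits into the claimed maximum of $\mu_{\mathcal{Y}',j}$ and the $\ell\neq j$ terms. I do not expect a genuine obstacle here; the only point needing care is checking that the hypothesis on $\min_k\|{\bf B}{\bf y}^{(j)}_k\|_2$ is precisely what licenses the renormalization into standard form, and keeping in mind that all coherence quantities for $\mathcal{Y}'$ are taken with respect to the induced basis $\mathcal{B}'$.
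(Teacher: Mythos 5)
Your proof is correct and is essentially the paper's argument: both reduce, by linearity, to the action of $\times_j \mathbf{B}$ on each rank-one term, then renormalize the $j$-th factor (licensed by $\min_k\|\mathbf{B}\mathbf{y}^{(j)}_k\|_2>0$) and read the modewise coherences off the resulting standard form. The only cosmetic difference is that the paper carries out the rank-one computation via the mode-$j$ unfolding identity $\mathbf{Y}'_{(j)}=\mathbf{B}\mathbf{Y}_{(j)}$ together with \eqref{equ:KronModenFlat}, whereas you verify the same identity entrywise directly from the definition \eqref{equ:DefModejProduct}.
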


The proof of this and all subsequent intermediate results stated in this section can be found in Appendix~\ref{AppMinSuppProofs}.
The next lemma gives us a useful expression for the norm of a tensor after a $j$-mode product in terms of vector inner products.

\begin{lem}
Let $j \in [d]$, ${\bf B} \in \mathbbm{C}^{m \times n_j}$, and $\mathcal{Y} \in \mathbbm{C}^{n_1 \times \dots \times n_d}$ be a rank-$r$ tensor in standard form as per \eqref{equ:rankrTensor}.  Then,
\begin{equation*}
\| \mathcal{Y} \times_j {\bf B} \|^2 ~=~ \sum_{k,h=1}^r \sum^{\prod_{\ell \neq j} n_\ell}_{a = 1} \alpha_k \left( \otimes_{\ell \neq j} {\bf y}^{(\ell)}_k \right)_a \overline{\alpha_h \left( \otimes_{\ell \neq j} {\bf y}^{(\ell)}_h \right)_a} \left \langle  {\bf B}  {\bf y}^{(j)}_k,  {\bf B} {\bf y}^{(j)}_h \right \rangle,
\end{equation*}
where $(\mathbf{u})_a$ denotes the $a^{\rm th}$ coordinate of a vector $\mathbf{u}$.
\label{lem:ModejNormexp}
\end{lem}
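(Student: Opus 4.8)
The plan is to reduce the statement to a Frobenius-norm computation on the mode-$j$ unfolding. First I would invoke the fact recorded just after Lemma~\ref{lem:modeProdProps} that mode-$j$ matricization is an inner-product-preserving vector space isomorphism, so that $\| \mathcal{Y} \times_j {\bf B} \|^2 = \left\| (\mathcal{Y} \times_j {\bf B})_{(j)} \right\|_{\rm F}^2$, and then use \eqref{unfolding_via_product} to rewrite the right-hand side as $\left\| {\bf B}\, {\bf Y}_{(j)} \right\|_{\rm F}^2$.

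Next I would plug in the explicit rank-one expansion of ${\bf Y}_{(j)}$ already derived in the proof of Lemma~\ref{lem:NewStandForm} (itself a consequence of \eqref{equ:KronModenFlat} applied to the standard form \eqref{equ:rankrTensor}), namely ${\bf Y}_{(j)} = \sum_{k=1}^r \alpha_k\, {\bf y}^{(j)}_k \left( \otimes_{\ell \neq j} {\bf y}^{(\ell)}_k \right)^\top$. By linearity of matrix multiplication this gives ${\bf B}\,{\bf Y}_{(j)} = \sum_{k=1}^r \alpha_k\, \left({\bf B}{\bf y}^{(j)}_k\right)\left( \otimes_{\ell \neq j} {\bf y}^{(\ell)}_k \right)^\top$, a sum of $r$ rank-one matrices with row index ranging over $[m]$ and column index ranging over $\prod_{\ell \neq j} n_\ell$ values.

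Then I would expand $\left\| {\bf B}\,{\bf Y}_{(j)} \right\|_{\rm F}^2 = \sum_{p,a} \left| \left({\bf B}\,{\bf Y}_{(j)}\right)_{p,a} \right|^2$ entrywise, substituting $\left({\bf B}\,{\bf Y}_{(j)}\right)_{p,a} = \sum_{k} \alpha_k \left({\bf B}{\bf y}^{(j)}_k\right)_p \left( \otimes_{\ell \neq j} {\bf y}^{(\ell)}_k \right)_a$, using $|z|^2 = z\bar z$ to obtain a double sum over $k,h \in [r]$, and interchanging the order of summation to pull the sum over the row index $p$ to the inside. Recognizing $\sum_{p=1}^m \left({\bf B}{\bf y}^{(j)}_k\right)_p \overline{\left({\bf B}{\bf y}^{(j)}_h\right)_p} = \left\langle {\bf B}{\bf y}^{(j)}_k,\, {\bf B}{\bf y}^{(j)}_h \right\rangle$ then yields exactly the claimed identity.

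The argument is entirely routine; the only point requiring care is the bookkeeping of complex conjugates. The Kronecker-product vectors $\otimes_{\ell \neq j} {\bf y}^{(\ell)}_k$ enter ${\bf Y}_{(j)}$ as plain transposes rather than conjugate transposes, so in expanding the squared modulus it is precisely the $h$-indexed copies of $\alpha_h$, of $\left( \otimes_{\ell \neq j} {\bf y}^{(\ell)}_h \right)_a$, and of $\left({\bf B}{\bf y}^{(j)}_h\right)_p$ that acquire conjugation, which matches the form of the right-hand side in the statement. There is no substantive obstacle.
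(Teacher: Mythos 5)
Your proposal is correct and follows essentially the same route as the paper: unfold in mode $j$, use \eqref{equ:KronModenFlat} to write ${\bf B}\,{\bf Y}_{(j)}$ as a sum of rank-one matrices $\alpha_k \left({\bf B}{\bf y}^{(j)}_k\right)\left(\otimes_{\ell \neq j}{\bf y}^{(\ell)}_k\right)^\top$, and expand the squared Frobenius norm into a double sum over $k,h$ in which the row sums collapse to $\left\langle {\bf B}{\bf y}^{(j)}_k, {\bf B}{\bf y}^{(j)}_h\right\rangle$. The paper organizes this expansion via columnwise Frobenius inner products rather than an entrywise sum, but that is only a difference in bookkeeping, and your handling of the conjugates matches the stated identity.
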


The following proposition demonstrates that a single modewise Johnson-Lindenstrauss embedding of any low-rank tensor $\mathcal{Y}$ of the form \eqref{equ:rankrTensor} will preserve its norm up to an error depending on the overall $\ell^2$-norm of its coefficients ${\boldsymbol \alpha} \in \mathbbm{C}^r$.  In order to accomplish this, we will connect  Johnson-Lindenstrauss embeddings of combinations of the basis vectors ${\bf y}^{(\ell)}_k$ to the embedding properties of any low-rank tensor $\mathcal{Y}$ of the form \eqref{equ:rankrTensor}.  We will employ Lemma~\ref{lem:InnProdJL} for this purpose and consider the following sets $\mathcal{S}'_{j}$ defined using the basis vectors ${\bf y}^{(\ell)}_k$.  For each mode $j \in [d]$ of any rank-$r$ tensor as per \eqref{equ:rankrTensor}, we can associate the following set $\mathcal{S}'_{j}$ of $4 {r \choose 2} + r = 2r^2 - r$ vectors in $\mathbbm{C}^{n_j}$ that will be of use for us later together with Lemma~\ref{lem:InnProdJL} 
\begin{equation}\label{sets}
\mathcal{S}'_{j} := \left( \bigcup_{1 \leq h < k \leq r} \left\{ {\bf y}^{(j)}_k - {\bf y}^{(j)}_h, {\bf y}^{(j)}_k + {\bf y}^{(j)}_h, {\bf y}^{(j)}_k - \mathbbm{i} {\bf y}^{(j)}_h,  {\bf y}^{(j)}_k + \mathbbm{i} {\bf y}^{(j)}_h \right\} \right) \bigcup \left\{ {\bf y}^{(j)}_k \right\}.
\end{equation}

\begin{prop}
Let $j \in [d]$ and $\mathcal{Y} \in \mathbbm{C}^{n_1 \times \dots \times n_d}$ be a rank-$r$ tensor as per \eqref{equ:rankrTensor}.  Suppose that $\mathbf{A} \in \mathbbm{C}^{m \times n_j}$ is an $\left(\epsilon / 4 \right)$-JL embedding of all the vectors from the set $\mathcal{S}'_{j}$ defined as per~\eqref{sets} into $\mathbbm{C}^{m}$.  Let $\mathcal{Y}' := \mathcal{Y} \times_j {\bf A}$ and rewrite it in standard form so that
$$\mathcal{Y}' = \sum_{k=1}^r \alpha'_k \left( \left( \bigcirc_{\ell < j} {\bf y}^{(\ell)}_k \right) \bigcirc \frac{{\bf A}{\bf y}^{(j)}_k}{\left\| {\bf A} {\bf y}^{(j)}_k \right\|_2} \bigcirc \left( \bigcirc_{\ell > j}^d {\bf y}^{(\ell)}_k \right) \right).$$
Then all of the following hold:\\
\begin{enumerate}
\item[($\dagger$)]  $\left| \alpha'_k - \alpha_k \right| ~\leq~ \epsilon |\alpha_k| / 4$ for all $k \in [r]$ so that $\| \boldsymbol \alpha' \|_\infty \leq (1 + \epsilon/4) \| \boldsymbol \alpha \|_\infty$. \\

\item[($\dagger \dagger$)] $\mu_{\mathcal{Y}',j} ~\leq~ \frac{\mu_{\mathcal{Y},j} + \epsilon}{1 - \epsilon/4}$, and $\mu_{\mathcal{Y}',\ell} ~=~ \mu_{\mathcal{Y},\ell}$ for all $\ell \in [d] \setminus \{ j \}$.\\

\item[($\dagger \dagger \dagger$)] $\displaystyle \left| \| \mathcal{Y}' \|^2 -  \| \mathcal{Y} \|^2\right| ~\leq~ \epsilon (r+1) \| {\boldsymbol \alpha} \|^2_2$.

\end{enumerate}

\label{thm:ForInduction}
\end{prop}

Note that part ($\dagger \dagger \dagger$) of Proposition~\ref{thm:ForInduction} bounds $\displaystyle \left| \| \mathcal{Y}' \|^2 -  \| \mathcal{Y} \|^2\right|$ with respect to $\| {\boldsymbol \alpha} \|^2_2$. Traditional JL-type error guarantees typically want to prove error bounds of the form $\displaystyle \left| \| \mathcal{Y}' \|^2 -  \| \mathcal{Y} \|^2\right| \leq C_\epsilon \|  \mathcal{Y} \|^2$, however.  The next lemma bounds $\| {\boldsymbol \alpha} \|^2_2$ by $\| \mathcal{Y} \|^2$ so that the reader who desires such bounds can obtain them easily for any tensor with sufficiently small modewise coherence.

\begin{lem}\label{lem:norm_comparison}
Let $\mathcal{Y} \in \mathbbm{C}^{n_1 \times \dots \times n_d}$ be a rank-$r$ tensor as per \eqref{equ:rankrTensor} with the basis coherence $\mu_{\mathcal{Y}}' < (r-1)^{-1}$. Then,
$$\| {\boldsymbol \alpha} \|^2_2 ~\leq~ \left( \frac{1}{1 - (r-1) \mu_{\mathcal{Y}}'} \right) \| \mathcal{Y} \|^2 ~\leq~ \left( \frac{1}{1 - (r-1) \prod^d_{\ell =1} \mu_{\mathcal{Y},\ell}} \right) \| \mathcal{Y} \|^2 ~\leq~ \left( \frac{1}{1 - (r-1) \mu_\mathcal{Y}^d} \right) \| \mathcal{Y} \|^2.$$
\label{lem:coefintermsofYnorm}
\end{lem}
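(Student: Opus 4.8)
The plan is to expand $\|\mathcal{Y}\|^2$ directly in terms of the coefficient vector ${\boldsymbol \alpha}$, isolate the diagonal contribution (which equals $\|{\boldsymbol \alpha}\|_2^2$ exactly because each ${\bf y}^{(\ell)}_k$ has unit norm), and absorb the off-diagonal part into a multiple of $\|{\boldsymbol \alpha}\|_2^2$ controlled by the basis coherence. First I would apply Lemma~\ref{lem:InnerProdprod}($\dagger\dagger$) repeatedly (equivalently, via a one-line induction on the number of modes) together with sesquilinearity of the tensor inner product to obtain
$$\|\mathcal{Y}\|^2 ~=~ \sum_{k,h=1}^r \alpha_k \overline{\alpha_h} \left\langle \bigcirc_{\ell=1}^d {\bf y}^{(\ell)}_k, \bigcirc_{\ell=1}^d {\bf y}^{(\ell)}_h \right\rangle ~=~ \sum_{k,h=1}^r \alpha_k \overline{\alpha_h} \prod_{\ell=1}^d \left\langle {\bf y}^{(\ell)}_k, {\bf y}^{(\ell)}_h \right\rangle.$$
Since $\left\langle {\bf y}^{(\ell)}_k, {\bf y}^{(\ell)}_k \right\rangle = \| {\bf y}^{(\ell)}_k \|_2^2 = 1$ for every $\ell$ and $k$, the diagonal terms sum to exactly $\|{\boldsymbol \alpha}\|_2^2$, while by the definition of $\mu'_{\mathcal{Y}}$ each off-diagonal product satisfies $\left| \prod_{\ell=1}^d \left\langle {\bf y}^{(\ell)}_k, {\bf y}^{(\ell)}_h \right\rangle \right| \le \mu'_{\mathcal{Y}}$. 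Hence
$$\|\mathcal{Y}\|^2 ~\geq~ \|{\boldsymbol \alpha}\|_2^2 ~-~ \mu'_{\mathcal{Y}} \sum_{k \neq h} |\alpha_k|\,|\alpha_h|.$$

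Next I would bound the off-diagonal sum by $\sum_{k\ne h}|\alpha_k||\alpha_h| \le (r-1)\|{\boldsymbol \alpha}\|_2^2$, either via the pairwise estimate $|\alpha_k||\alpha_h| \le \tfrac12\left(|\alpha_k|^2 + |\alpha_h|^2\right)$ summed over the $r(r-1)$ ordered off-diagonal pairs (each index then carries total weight $r-1$), or via Cauchy--Schwarz in the form $\sum_{k\ne h}|\alpha_k||\alpha_h| = \|{\boldsymbol \alpha}\|_1^2 - \|{\boldsymbol \alpha}\|_2^2 \le (r-1)\|{\boldsymbol \alpha}\|_2^2$. Substituting gives $\|\mathcal{Y}\|^2 \ge \left(1-(r-1)\mu'_{\mathcal{Y}}\right)\|{\boldsymbol \alpha}\|_2^2$, and the hypothesis $\mu'_{\mathcal{Y}} < (r-1)^{-1}$ makes the right-hand factor strictly positive, so dividing through establishes the first inequality of the lemma.

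Finally, the two remaining inequalities follow immediately from the chain $\mu'_{\mathcal{Y}} \le \prod_{\ell=1}^d \mu_{\mathcal{Y},\ell} \le \mu_{\mathcal{Y}}^d$ recorded right after Definition~\ref{muBdef}, together with the fact that $t \mapsto (1-(r-1)t)^{-1}$ is increasing on $[0,(r-1)^{-1})$ (the later bounds being asserted only on the range where the respective denominators remain positive). I do not anticipate any genuine obstacle here: the only points requiring a little care are keeping the off-diagonal constant at $r-1$ rather than $r$ — which the elementary estimate above supplies — and noting that the stated coherence hypothesis is exactly what is needed to keep $1-(r-1)\mu'_{\mathcal{Y}}$ positive so that the final division is legitimate.
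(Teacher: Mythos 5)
Your proposal is correct and follows essentially the same route as the paper's proof: expand $\|\mathcal{Y}\|^2$ via Lemma~\ref{lem:InnerProdprod}, identify the diagonal terms with $\|{\boldsymbol \alpha}\|_2^2$, bound each off-diagonal term by $\mu'_{\mathcal{Y}}$, and control $\sum_{k\neq h}|\alpha_k||\alpha_h| \leq (r-1)\|{\boldsymbol \alpha}\|_2^2$ via Cauchy--Schwarz before dividing by the positive factor $1-(r-1)\mu'_{\mathcal{Y}}$. Your explicit handling of the final two inequalities via the chain $\mu'_{\mathcal{Y}} \leq \prod_{\ell=1}^d \mu_{\mathcal{Y},\ell} \leq \mu_{\mathcal{Y}}^d$ matches the remark the paper makes after Definition~\ref{muBdef}.
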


We are now prepared to establish Proposition~\ref{thm:ObliviousSubspaceEmbedded}, our main component of the proof of Theorem~\ref{cor:MainOblEmb} in this section.  Recall that combining it with Lemma~\ref{lem:coefintermsofYnorm} provides traditional JL-embedding error bounds.

\begin{prop}
Let $\epsilon \in (0, 3/4]$, $\mathcal{Y} \in \mathbbm{C}^{n_1 \times \dots \times n_d}$ be a rank-$r$ tensor expressed in standard form as per $\eqref{equ:rankrTensor}$, and ${\bf A}_j \in \mathbbm{C}^{m_j \times n_j}$ be an $\left(\epsilon / 4d \right)$-JL embedding of all the vectors from the set $\mathcal{S}'_{j}$ defined as per~\eqref{sets} into $\mathbbm{C}^{m_j}$ for each $j \in [d]$.  Then,
\begin{align}\label{theor2result}
\left| \left\| \mathcal{Y} \right\|^2 - \left\| \mathcal{Y} \times_1 {\bf A}_1 \dots \times_d {\bf A}_d \right\|^2 \right| ~&\leq~ \epsilon   \left( \mathbbm{e} +  \mathbbm{e}^{2} \sqrt{r(r-1)} \cdot \max\left( \epsilon^{d-1} , \mu_{\mathcal{Y}}^{d-1} \right) \right)  \| {\boldsymbol \alpha} \|^2_2 \\
&\leq~\epsilon \mathbbm{e}^{2}  \left( r + 1\right)  \| {\boldsymbol \alpha} \|^2_2 \nonumber
\end{align}
always holds. Here, $\mu_\mathcal{Y}$ is maximum modewise coherence of the tensor defined by \eqref{mudef}.
Furthermore, if $\mu_{\mathcal{Y}} = 0$ then
$$\left| \left\| \mathcal{Y} \right\|^2 - \left\| \mathcal{Y} \times_1 {\bf A}_1 \dots \times_d {\bf A}_d \right\|^2 \right| ~\leq~ \left( \epsilon +  \mathbbm{e}\sqrt{r(r-1)} \epsilon^d \right) \mathbbm{e} \| {\boldsymbol \alpha} \|^2_2.$$
\label{thm:ObliviousSubspaceEmbedded}
\end{prop}

In addition, Proposition~\ref{thm:ObliviousSubspaceEmbedded} can be extended to show that modewise compression preserves scalar products between two tensors $\mathcal{X}$ and $\mathcal{Y}$ spanned by the same rank-one tensors: 
\begin{prop}
Suppose that both $\mathcal{X}, \mathcal{Y} \in \mathbbm{C}^{n_1 \times \dots \times n_d}$ can be represented in terms of the same basis $\{ {\bf y}^{(\ell)}_k\}$ for $k = 1, \ldots, r$ and $l  = 1, \ldots, d$ in their standard form~\eqref{equ:rankrTensor}.
Let $\epsilon \in (0, 3/4]$, and ${\bf A}_j \in \mathbbm{C}^{m_j \times n_j}$ be a $\left(\epsilon / 4d \right)$-JL embedding of the set $S_j'$ defined as per~\eqref{sets} for each $j \in [d]$.  
Then,
\begin{align*}
\left| \left\langle \mathcal{X} \times_{j = 1}^d \mathbf{A}_j,~ \mathcal{Y} \times_{j = 1}^d \mathbf{A}_j \right\rangle - \left\langle \mathcal{X},~ \mathcal{Y}\right\rangle \right| 
&\leq 4 \epsilon' \cdot \frac{\max \left\{ \| \mathcal{X} \|^2, \| \mathcal{Y} \|^2 \right\}}{1 - \left(r-1\right) \mu_{\mathcal{Y}}'}
\end{align*}
where
\begin{equation}\label{epsprime}
\epsilon' :=  
\begin{cases}
\left( \epsilon + \mathbbm{e} \sqrt{r(r-1)} \epsilon^d \right) \mathbbm{e} & \textrm{if}~\mu_{\mathcal{Y}} = 0, \\
\epsilon  \left( \mathbbm{e} +  \mathbbm{e}^{2} \sqrt{r(r-1)} \cdot \max\left( \epsilon^{d-1} , \mu_{\mathcal{Y}}^{d-1} \right) \right) & \textrm{otherwise.} 
\end{cases}
\end{equation}
\label{cor:PreserveInnerProds}
\end{prop}

\begin{proof}
Using the polarization identity in combination with Lemma~\ref{lem:ProdProps} and Proposition~\ref{thm:ObliviousSubspaceEmbedded}, we can see that
\begin{align*}
\left| \left\langle \mathcal{X} \times_{j = 1}^d \mathbf{A}_j,~ \mathcal{Y} \times_{j = 1}^d \mathbf{A}_j \right\rangle - \left\langle \mathcal{X},~ \mathcal{Y}\right\rangle \right| &= \left| \frac{1}{4} \sum^3_{\ell = 0} \mathbbm{i}^\ell \left( \left\| \mathcal{X} \times_{j = 1}^d \mathbf{A}_j + \mathbbm{i}^\ell \mathcal{Y} \times_{j = 1}^d \mathbf{A}_j \right\|^2_2  -  \left\| \mathcal{X} + \mathbbm{i}^\ell \mathcal{Y} \right\|^2_2 \right)\right|\\ 
&\leq \frac{1}{4} \sum^3_{\ell = 0}  \epsilon' \left\| {\boldsymbol \beta } + \mathbbm{i}^\ell {\boldsymbol \alpha} \right\|^2_2 ~\leq~ \epsilon' \left( \left\| {\boldsymbol \beta } \right\|_2 + \left \| {\boldsymbol \alpha} \right\|_2 \right)^2\\
&\leq 2 \epsilon' \left( \| {\boldsymbol \beta } \|_2^2 + \| {\boldsymbol \alpha} \|_2^2 \right) ~\leq~ 4 \epsilon' \cdot \max \left\{ \| {\boldsymbol \beta } \|_2^2, \| {\boldsymbol \alpha} \|_2^2 \right\},
\end{align*}
where the second to last inequality follows from Young's inequality for products.  An application of Lemma~\ref{lem:coefintermsofYnorm} yields the final inequality.
\end{proof}

Propositions~\ref{thm:ObliviousSubspaceEmbedded} and~\ref{cor:PreserveInnerProds} guarantee that modewise JL-embeddings approximately preserve the norms and inner products between all tensors in the span of the set 
$$\mathcal{B} := \left \{ \bigcirc^d_{\ell = 1} {\bf y}^{(\ell)}_k ~\big|~ k \in [r] \right\} \subset \mathbbm{C}^{n_1 \times \dots \times n_d}.$$  
Let
$$\mathcal{L} := \textrm{span} \left( \left \{ \bigcirc^d_{\ell = 1} {\bf y}^{(\ell)}_k ~\big|~ k \in [r] \right\} \right).$$
Employing $\eta$-optimal JL embeddings (as per Definition~\ref{def:EtaOpt}), we can now prove the main result of this section, Theorem~\ref{cor:MainOblEmb}:

\begin{proof}[Proof of Theorem~\ref{cor:MainOblEmb}] Let $\mathcal{Y} \in \mathcal{L}$. By Proposition~\ref{cor:PreserveInnerProds}, the linear operator $L$ defined as $L(\mathcal{Z}) = \mathcal{Z} \times_1 {\bf A}_1 \dots \times_d {\bf A}_d $ is an $\eps$-JL embedding of $\mathcal{Y}$ if 
\begin{itemize}
\item $4/\left(1 - (r-1)\mu'_{\mathcal{B}}\right) \le 8$, and
\item each ${\bf A}_j$ is an $\left(\delta/4d\right)$-JL embedding of the set $S_j'$ of cardinality $|S_j'| \le 2r^2 - r$, where the dependence $\eps'(\delta)$ is defined by \eqref{epsprime}, and $\eps \ge 8 \eps'$.
\end{itemize}
The first condition is satisfied since basis incoherence condition implies 
$$\mu'_{\mathcal{B}} \le \mu^d_{\mathcal{B}} \le 1/2\left(r-1\right).$$ 
Hence, $8(1 - (r-1)\mu'_{\mathcal{B}}) \ge 4$. To check the second condition, note that due to \eqref{epsprime}, it is enough to use an $\eps$ that satisfies
$$
\eps \ge 8\delta e + 8\delta e^2 r \max\left(\delta^{d-1}, \mu_{\mathcal{B}}^{d-1}\right),
$$
and having $\delta := \eps/16e \cdot \left(1/r\right)^{1/d}$ ensures that it does. Finally, if each matrix ${\bf A}_j$ is taken from an $\left(\eta/d\right)$-optimal family of JL distributions it will be an $\left(\delta/4d\right)$-JL embedding of $S_j'$ into $\mathbbm{C}^{m_j}$ with probability $1 - \eta/d$ as long as
$$
|S_j'| = 2r^2 - r \le \frac{\eta}{d} \exp\left(\frac{\delta^2 m_j}{16d^2C}\right),
$$
which is satisfied for each $m_j$ defined in \eqref{eq:rDepOne}. Taking union bound over all $d$ modes then concludes the proof of Theorem~\ref{cor:MainOblEmb}. 
\end{proof}

\begin{remark}[JL-type embeddings for low-rank matrices]
Theorem~\ref{cor:MainOblEmb} (as well as the above results, including Proposition~\ref{thm:ObliviousSubspaceEmbedded}) can be applied in the special case where $\mathcal{X} = {\bf X}$ is a matrix in $\mathbbm{C}^{n_1 \times n_2}$. In this case, the CP-rank is the usual matrix rank, and the CP decomposition becomes the regular SVD decomposition of the matrix, which can be computed efficiently in parallel (see, e.g., \cite{iwen2016distributed}).  In particular, the basis vectors are orthogonal to each other in this case. The result of Theorem~\ref{cor:MainOblEmb} implies that taking $A$ and $B$ as matrices belonging to the $\left(\eta/2\right)$-JL embedding family and of sizes $n_1 \times m_1$ and $n_2 \times m_2$, respectively, such that $m_j \gtrsim r \ln(r/\sqrt{\eta})/\eps^2$ (for $j = 1, 2$), we get the following JL-type result for the Frobenius matrix norm: with probability $1 - \eta$,
$$
\|A^T {\bf X} B\|_F^2 = (1 + \tilde \eps)\|{\bf X}\|_F^2 \quad \text{ for some } |\tilde \eps| \le \eps.
$$
\end{remark}

\subsection{Naturally incoherent tensor bases}
\label{sec:IncoherentBasesareCommon}
Again, we remind the reader that Lemma~\ref{lem:coefintermsofYnorm} can be used in combination with the theorems and corollaries above/below in order to provide JL-embedding results of the usual type.  In order for Lemma~\ref{lem:coefintermsofYnorm} to apply, however, we need the coherence $\mu_{\mathcal{B}}'$ of the basis $\mathcal{B}$ to satisfy $\mu_{\mathcal{B}}' < (r-1)^{-1}$.  One popular set of bases with this property are those that result from considering tensors whose Tucker decompositions \cite{tucker1966some,kolda2001orthogonal,iwen2016distributed} have core tensors with a small number of nonzero entries.  More specifically, let $\mathcal{C} \in \mathbbm{C}^{n_1 \times \dots \times n_d}$, ${\bf U}^{(j)} \in \mathbbm{C}^{n_j \times n_j}$ be unitary for all $j \in [d]$, and $\mathcal{S} \subset [n_1] \times \dots \times [n_d]$ be a set of $r$ indices in $\mathcal{C}$.  Now consider the $r$-dimensional tensor subspace 
$$\mathcal{L}_{\rm Tucker} := \left \{ \mathcal{X} ~\Big|~ \mathcal{X} ~=~ \mathcal{C} \times_{j=1}^d {\bf U}^{(j)} ~\textrm{with}~ \mathcal{C}_{\bf i} = 0 ~\textrm{for~all}~ {\bf i} \notin \mathcal{S} \right\}.$$
One can see that any tensor $\mathcal{Y} \in \mathcal{L}_{\rm Tucker}$ can be written in standard form as per \eqref{equ:rankrTensor} with, for all $\ell \in [d]$, ${\bf y}^{(\ell)}_k = {\bf U}^{(\ell)}_{k'}$ for some column $k' \in [n_\ell]$.  As a result, $\mu_{\mathcal{Y}}' = \mu_{\mathcal{B}}' = 0$ will hold due to the orthogonality of the columns of each ${\bf U}^{(\ell)}$ matrix.  We therefore have the following special case of Proposition~\ref{thm:ObliviousSubspaceEmbedded} in this setting.

\begin{cor}
Suppose that $\mathcal{Y} \in \mathcal{L}_{\rm Tucker} \subset \mathbbm{C}^{n_1 \times \dots \times n_d}$.
Let $\epsilon \in (0, 3/4]$, and ${\bf A}_j \in \mathbbm{C}^{m_j \times n_j}$ be defined as per Proposition~\ref{thm:ObliviousSubspaceEmbedded} for each $j \in [d]$.  
Then,
$$\left| \left\| \mathcal{Y} \right\|^2 - \left\| \mathcal{Y} \times_1 {\bf A}_1 \dots \times_d {\bf A}_d \right\|^2 \right| ~\leq~ \epsilon'  \left\| \mathcal{Y} \right\|^2,$$
where
\begin{equation*}
\epsilon' :=  
\begin{cases}
\left( \epsilon + \mathbbm{e} \sqrt{r(r-1)} \epsilon^d \right) \mathbbm{e} & \textrm{if}~\mu_{\mathcal{B}} = 0, \\
\epsilon  \left( \mathbbm{e} +  \mathbbm{e}^{2} \sqrt{r(r-1)} \cdot \max\left( \epsilon^{d-1} , \mu_{\mathcal{B}}^{d-1} \right) \right) & \textrm{otherwise}. 
\end{cases}
\end{equation*}
\label{cor:TuckerLowRank}
\end{cor}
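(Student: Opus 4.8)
The plan is to reduce directly to Theorem~\ref{thm:ObliviousSubspaceEmbedded}; the only new ingredient needed is that, for a tensor drawn from $\mathcal{L}_{\rm Tucker}$, the coefficient norm $\|{\boldsymbol\alpha}\|_2$ occurring in that theorem equals the tensor norm $\|\mathcal{Y}\|$. Recall from the discussion preceding the statement that any $\mathcal{Y}\in\mathcal{L}_{\rm Tucker}$, written as $\mathcal{Y} = \mathcal{C}\times_{j=1}^d{\bf U}^{(j)}$ with $\mathcal{C}_{\bf i}=0$ for ${\bf i}\notin\mathcal{S}$, can be put in the standard form \eqref{equ:rankrTensor} by first expanding $\mathcal{C}$ over the canonical rank-one basis tensors $\bigcirc_{\ell=1}^d{\bf e}_{i_\ell}$ of $\mathbbm{C}^{n_1\times\dots\times n_d}$ and then iterating Lemma~\ref{lem:NewStandForm} over $j=1,\dots,d$ (which applies since each ${\bf U}^{(j)}$ is unitary, so $\|{\bf U}^{(j)}{\bf e}_{i}\|_2=1>0$ and no renormalization occurs). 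Enumerating $\mathcal{S}=\{{\bf i}_1,\dots,{\bf i}_r\}$, this yields $\alpha_k = \mathcal{C}_{{\bf i}_k}$ and ${\bf y}^{(\ell)}_k = {\bf U}^{(\ell)}_{(i_k)_\ell}$, the $(i_k)_\ell$-th column of ${\bf U}^{(\ell)}$, and these vectors have unit $\ell^2$-norm, so this is already standard form.

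Next I would record that the basis coherence vanishes. For $k\neq h$ the tuples ${\bf i}_k$ and ${\bf i}_h$ differ in some mode $\ell$, so the columns ${\bf y}^{(\ell)}_k,{\bf y}^{(\ell)}_h$ of the unitary matrix ${\bf U}^{(\ell)}$ are orthogonal, whence $\prod_{\ell=1}^d\bigl|\langle {\bf y}^{(\ell)}_k,{\bf y}^{(\ell)}_h\rangle\bigr| = 0$. Thus $\mu_{\mathcal{Y}}' = \mu_{\mathcal{B}}' = 0 < (r-1)^{-1}$, so Lemma~\ref{lem:coefintermsofYnorm} applies and gives $\|{\boldsymbol\alpha}\|_2^2 \le \|\mathcal{Y}\|^2$; in fact equality holds here, since by Lemma~\ref{lem:InnerProdprod} the rank-one basis tensors $\bigl\{\bigcirc_{\ell=1}^d{\bf y}^{(\ell)}_k\bigr\}_{k\in[r]}$ are orthonormal (equivalently, each unitary $j$-mode product preserves the Euclidean norm, so $\|\mathcal{Y}\| = \|\mathcal{C}\| = \|{\boldsymbol\alpha}\|_2$).

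Finally I would apply Theorem~\ref{thm:ObliviousSubspaceEmbedded} to $\mathcal{Y}$ with the matrices ${\bf A}_j$ prescribed there. Since $\mu_{\mathcal{Y}} = \mu_{\mathcal{B}}$ by the convention \eqref{mudef}, the two conclusions of that theorem — the ``furthermore'' estimate $\bigl(\epsilon + \mathbbm{e}\sqrt{r(r-1)}\,\epsilon^d\bigr)\mathbbm{e}\,\|{\boldsymbol\alpha}\|_2^2$ when $\mu_{\mathcal{B}}=0$, and $\epsilon\bigl(\mathbbm{e} + \mathbbm{e}^{2}\sqrt{r(r-1)}\max(\epsilon^{d-1},\mu_{\mathcal{B}}^{d-1})\bigr)\|{\boldsymbol\alpha}\|_2^2$ otherwise — are exactly $\epsilon'\|{\boldsymbol\alpha}\|_2^2$ for the piecewise $\epsilon'$ in the corollary's statement, and substituting $\|{\boldsymbol\alpha}\|_2^2 \le \|\mathcal{Y}\|^2$ finishes the proof. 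There is no real obstacle here: the one point demanding care is to keep the basis coherence $\mu_{\mathcal{B}}'$ (which is $0$, making Lemma~\ref{lem:coefintermsofYnorm} applicable) carefully distinct from the maximum modewise coherence $\mu_{\mathcal{B}}$, which may well equal $1$ whenever two tuples of $\mathcal{S}$ share a coordinate and which is what actually controls the size of $\epsilon'$ through Theorem~\ref{thm:ObliviousSubspaceEmbedded}.
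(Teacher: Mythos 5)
Your proposal is correct and follows essentially the same route as the paper: apply Theorem~\ref{thm:ObliviousSubspaceEmbedded}, observe that the Tucker structure with unitary factors forces $\mu_{\mathcal{B}}'=0$, and invoke Lemma~\ref{lem:coefintermsofYnorm} to replace $\|{\boldsymbol\alpha}\|_2^2$ by $\|\mathcal{Y}\|^2$. Your extra care in distinguishing $\mu_{\mathcal{B}}'$ from $\mu_{\mathcal{B}}$ (which may be nonzero) matches the intent of the paper's two-case statement.
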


\begin{proof}
This follows from Proposition~\ref{thm:ObliviousSubspaceEmbedded} combined with Lemma~\ref{lem:coefintermsofYnorm} after noting that $\mu_{\mathcal{B}}' = 0$ holds.
\end{proof}

\bigskip

Another natural set of bases on which the property $\mu_{\mathcal{B}}' < (r-1)^{-1}$ is satisfied is the random family of sub-gaussian tensors.
The following Lemma~\ref{lem:gaussian_incoherence} shows that if all the components of all vectors ${\bf y}_{k}^{(j)}$ (for $j \in [d],k \in [r]$) are normalized independent $K$-subgaussian random variables (see Definition~\ref{def:subgaus} below), the coherence is actually low with high probability.
\begin{defin}\label{def:subgaus} A random variable $\xi$ is called K-subgaussian, if for all $t \ge 0$
$$
\p\left\{|\xi| > t\right\} \le 2 \exp\left(-t^2/K^2\right).
$$
\end{defin}
Informally, all normal random variables (with any mean and variance), and also those with lighter tails are K-subgaussian with some proper constant $K$. All bounded random variables are subgaussian.

\begin{lem}\label{lem:gaussian_incoherence} Let $\mu > 0$. Let $j \in [d]$ and $\mathcal{Y} \in \mathbbm{C}^{n_1 \times \dots \times n_d}$ be a rank-$r$ tensor as per \eqref{equ:rankrTensor}. Let $n = \min\limits_{i \in [d]} n_i$. If all components of all vectors ${\bf y}_k^{(j)}$ are normalized independent mean zero $K$-subgaussian random variables, with probability at least 
$1 - 2r^2d\exp\left(-c \mu^2 n\right)$ the maximum modewise coherence parameter of the tensor $\mathcal{Y}$ is at most $\mu$. Here, $c$ is a positive constant depending only on $K$.
\end{lem}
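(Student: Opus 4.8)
The plan is to control a single modewise correlation $\left| \left\langle {\bf y}^{(\ell)}_k, {\bf y}^{(\ell)}_h \right\rangle \right|$ for a fixed mode $\ell \in [d]$ and a fixed pair $k \neq h$ in $[r]$, and then take a union bound. Since $\mu_{\mathcal{Y}} = \max_{\ell \in [d]} \mu_{\mathcal{Y},\ell} = \max_{\ell} \max_{k \neq h} \left| \left\langle {\bf y}^{(\ell)}_k, {\bf y}^{(\ell)}_h \right\rangle \right|$ depends only on the generating vectors (and not on the coefficients $\alpha_k$ of \eqref{equ:rankrTensor}), and since Cauchy--Schwarz forces each such correlation to lie in $[0,1]$, the statement is trivial unless $\mu < 1$, which I assume throughout. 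I read the hypothesis as saying that every generating vector has the form ${\bf y}^{(\ell)}_k = {\bf g}^{(\ell)}_k / \left\| {\bf g}^{(\ell)}_k \right\|_2$, where the $dr$ vectors ${\bf g}^{(\ell)}_k \in \mathbbm{C}^{n_\ell}$ are independent with i.i.d.\ mean-zero $K$-subgaussian entries (scale-invariance of the correlation means that only the fact that the entrywise variance is bounded away from $0$ and $\infty$ in terms of $K$ matters); the union over the $d$ modes at the end is exactly what produces the factor $d$ in the claimed probability.

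First I would fix $\ell, k, h$ and reduce the target $\left| \left\langle {\bf y}^{(\ell)}_k, {\bf y}^{(\ell)}_h \right\rangle \right| \le \mu$ to the two favorable events $\left\| {\bf g}^{(\ell)}_k \right\|_2^2 \ge n_\ell / 2$ (and likewise for $h$) together with $\left| \left\langle {\bf g}^{(\ell)}_k, {\bf g}^{(\ell)}_h \right\rangle \right| \le \mu n_\ell / 2$, since on their intersection the ratio defining the correlation is at most $\mu$. For the norm events, $\left\| {\bf g}^{(\ell)}_k \right\|_2^2 = \sum_{i=1}^{n_\ell} |g_{k,i}|^2$ is a sum of i.i.d.\ subexponential variables with mean of order one, so Bernstein's inequality gives failure probability at most $2 \exp(-c_1 n_\ell) \le 2\exp(-c_1 \mu^2 n)$ with $c_1 = c_1(K)$ (using $\mu < 1$ and $n_\ell \ge n$). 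For the inner-product event, I would condition on ${\bf g}^{(\ell)}_h$: then $\left\langle {\bf g}^{(\ell)}_k, {\bf g}^{(\ell)}_h \right\rangle = \sum_i g_{k,i} \overline{g_{h,i}}$ is a weighted sum of independent mean-zero $K$-subgaussians, hence $K \left\| {\bf g}^{(\ell)}_h \right\|_2$-subgaussian (splitting real and imaginary parts); intersecting its subgaussian tail with the high-probability event $\left\| {\bf g}^{(\ell)}_h \right\|_2^2 \le 2 n_\ell$ yields failure probability at most $C \exp(-c_2 \mu^2 n_\ell) \le C\exp(-c_2 \mu^2 n)$ with $c_2 = c_2(K)$. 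Equivalently, one may write $\left\langle {\bf g}^{(\ell)}_k, {\bf g}^{(\ell)}_h \right\rangle$ as an off-diagonal quadratic form in the $2n_\ell$ real coordinates of $\left( {\bf g}^{(\ell)}_k, {\bf g}^{(\ell)}_h \right)$ and apply the Hanson--Wright inequality, noting the form has mean $0$, Frobenius norm $\Theta(\sqrt{n_\ell})$ and operator norm $\Theta(1)$, so that for a target of size $\mu n_\ell / 2$ with $\mu \lesssim 1$ the Gaussian branch governs and produces exactly this tail.

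Finally, I would union bound over the at most $dr$ norm events and the $d\binom{r}{2} \le dr^2 / 2$ inner-product events. Absorbing the $O(1)$ number of bad sub-events per pair together with all constant prefactors into a single constant $c = c(K)$ — and using $2r \le r^2$ for $r \ge 2$, the case $r \le 1$ being vacuous — gives total failure probability at most $2 r^2 d \exp(-c \mu^2 n)$, on whose complement $\left| \left\langle {\bf y}^{(\ell)}_k, {\bf y}^{(\ell)}_h \right\rangle \right| \le \mu$ holds simultaneously for all $\ell \in [d]$ and all $k \neq h$, i.e.\ $\mu_{\mathcal{Y}} \le \mu$, as desired.

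I expect the main obstacle to be the second favorable event: the quantity of interest is a ratio whose numerator is only subexponential (a sum of products of complex subgaussians) and whose denominator is itself random. The conditioning device above is meant to isolate the genuinely subgaussian direction while paying only an exponentially small price for the random scale, but care is needed to (a) keep every constant depending on $K$ alone, (b) handle the complex-valued inner product cleanly (via real/imaginary parts, or a complex Hanson--Wright inequality), and (c) pin down the meaning of ``normalized'' so that the entrywise variance — and hence the concentration of $\left\| {\bf g}^{(\ell)}_k \right\|_2^2$ around $n_\ell$ — is controlled purely in terms of $K$.
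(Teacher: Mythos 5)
Your proposal is correct and follows essentially the same route as the paper's proof: pass to the unnormalized subgaussian vectors, show their norms concentrate around $\sqrt{n}$, bound each pairwise inner product by conditioning on one vector and using a subgaussian (Hoeffding-type) tail, and finish with a union bound over the $\le r^2 d$ pairs and modes, absorbing the norm-event failure probability into $2r^2 d\exp(-c\mu^2 n)$. The only differences are bookkeeping ones: the paper's splitting $\p\left\{|\langle {\bf x},{\bf y}\rangle| \ge \mu\|{\bf x}\|\|{\bf y}\|\right\} \le \p\left\{|\langle {\bf x},{\bf y}\rangle| \ge \mu\|{\bf y}\|\sqrt{n/2}\right\} + \p\left\{\|{\bf x}\| < \sqrt{n/2}\right\}$ lets the factor $\|{\bf y}\|$ cancel inside Hoeffding so no upper norm bound is needed, whereas you use the event $\|{\bf g}_h\|_2^2 \le 2n$ instead, and you make explicit the harmless restriction to $\mu < 1$ that the paper leaves implicit.
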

\begin{proof} For any $k \in [r]$ and $j\in [d]$ denote $\tilde {\bf y}_k^{(j)} := {\bf y}_k^{(j)}\cdot \|{\bf \tilde y}_k^{(j)}\|$. By definition, ${\bf \tilde y}_k^{(j)}$ are independent $K$-subgaussian random variables for all $k \in [r]$ and $j \in [d]$. Therefore, their norms are of order $\sqrt{n}$ with high probability: for any fixed $k, j$,
$$\p\left\{n/2 \le \|\tilde {\bf y}_k^{(j)}\|_2^2 \le 2n\right\} \ge 1 - 2 \exp\left(-c_1n/K^4\right)$$
(see, e.g. [\cite{vershynin2018high}, Section 3.1]).  Taking union bound, we can conclude that with probability at least $1 - 2rd\exp\left(-c_1n/K^4\right)$, all vectors ${\bf \tilde y}_k^{(j)}$ have their norms between $[\sqrt{n/2}, \sqrt{2n}]$.
   
For any mean zero independent $K$-subgaussian vectors ${\bf x}$ and ${\bf y}$,
\begin{align}
   &\p\left\{|\langle {\bf x}, {\bf y}\rangle| \ge \mu\|{\bf x}\| \|{\bf y}\|\right\} \nonumber\\
   &\le \p\left\{|\langle {\bf x}, {\bf y}\rangle| \ge \mu \|{\bf y}\| \sqrt{n/2}\right\} + \p\left\{\|{\bf x}\| < \sqrt{n/2}\right\}.
\end{align}
To bound the first term, let us use Hoeffding's inequality (see, e.g. [\cite{vershynin2018high}, Theorem 2.6.3]). Conditioning on ${\bf y}$, we have
$$\p_{\bf x}\left\{\Big| \sum_i x_i y_i \Big| \ge \mu \|{\bf y}\| \sqrt{n/2}\right\} \le 2 \exp\left(-\frac{c_2\mu^2 n}{2K^2}\right).
$$
Now, let ${\bf \tilde y}_k^{(j)} = {\bf x}$ and ${\bf \tilde y}_l^{(j)} = {\bf y}$. Integrating over ${\bf \tilde y}_l^{(j)}$ and then taking union bound over all choices of $k, l$ and $j$, we get $\left|\langle {\bf y}_k^{(j)}, {\bf y}_l^{(j)}\rangle \right| \le \mu$ for all component vectors in the tensor $\mathcal{Y}$ with probability at least 
$$1 - 2r^2d\exp\left( -\frac{c_2\mu^2 n}{2K^2}\right) -  2rd\exp\left(-\frac{c_1n}{K^4}\right) \ge 1 - 2r^2d\exp\left(-c \mu^2 n\right).$$
Lemma~\ref{lem:gaussian_incoherence} is proved.
\end{proof}

The following two elementary corollaries illustrate the applicability of our theory to independent subgaussian tensors.
In these corollaries, the term {\it subgaussian tensor} always refers to a tensor defined as per Lemma~\ref{lem:gaussian_incoherence}, and should not be confused with a tensor with subgaussian elements.
\begin{cor} 
Let $\eps \in (0, 3/4]$. Let $\mathcal{Y}$ be a subgaussian tensor defined as in Lemma~\ref{lem:gaussian_incoherence}. For low-rank tensors in high-dimensional spaces, such that 
$$n := \min\limits_{i \in [d]} n_i \ge \frac{\log(r^2d)}{\eps^2c}$$ 
(the small constant $c$ is the same as in Lemma~\ref{lem:gaussian_incoherence}), with probability at least $ 1 - \exp\left(c'\eps^2n\right)$, Proposition~\ref{thm:ObliviousSubspaceEmbedded} holds with better dependence on $\eps$, namely,
$$
\left| \left\| \mathcal{Y} \right\|^2 - \left\| \mathcal{Y} \times_1 {\bf A}_1 \dots \times_d {\bf A}_d \right\|^2 \right| \leq~\left(\eps^d r + \eps\right) e^{2}   \| {\boldsymbol \alpha} \|^2_2.
$$
Here, $c' >0$ is an absolute constant.
\end{cor}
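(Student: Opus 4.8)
The plan is to obtain the corollary as a direct combination of Lemma~\ref{lem:gaussian_incoherence} with the first (general) bound in Theorem~\ref{thm:ObliviousSubspaceEmbedded}. First I would apply Lemma~\ref{lem:gaussian_incoherence} with the specific choice $\mu = \epsilon$. This shows that, with probability at least $1 - 2r^2 d \exp\!\left(-c\epsilon^2 n\right)$ over the draw of the normalized mean-zero $K$-subgaussian generating vectors $\left\{ {\bf y}^{(\ell)}_k \right\}$, the maximum modewise coherence of $\mathcal{Y}$ satisfies $\mu_{\mathcal{Y}} \leq \epsilon$. Since by hypothesis $n \geq \log(r^2 d)/(c\epsilon^2)$ (up to an absolute constant factor this lower bound is all that is needed), the exponent $c\epsilon^2 n$ dominates $\log(2r^2 d)$, so that $2r^2 d \exp\!\left(-c\epsilon^2 n\right) = \exp\!\left(\log(2r^2 d) - c\epsilon^2 n\right) \leq \exp\!\left(-c'\epsilon^2 n\right)$ for a suitable absolute constant $c' > 0$; this is exactly the claimed success probability.

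Next I would condition on the event $\left\{ \mu_{\mathcal{Y}} \leq \epsilon \right\}$, noting that this is a deterministic property of the tensor $\mathcal{Y}$ itself and is independent of the separately chosen embedding matrices ${\bf A}_1, \dots, {\bf A}_d$ from the statement of Theorem~\ref{thm:ObliviousSubspaceEmbedded}. On this event Theorem~\ref{thm:ObliviousSubspaceEmbedded} applies verbatim and gives
$$\left| \left\| \mathcal{Y} \right\|^2 - \left\| \mathcal{Y} \times_1 {\bf A}_1 \dots \times_d {\bf A}_d \right\|^2 \right| ~\leq~ \epsilon \left( \mathbbm{e} + \mathbbm{e}^{2} \sqrt{r(r-1)} \cdot \max\!\left( \epsilon^{d-1}, \mu_{\mathcal{Y}}^{d-1} \right) \right) \| {\boldsymbol \alpha} \|^2_2.$$
Because $\mu_{\mathcal{Y}} \leq \epsilon$ forces $\max\!\left( \epsilon^{d-1}, \mu_{\mathcal{Y}}^{d-1} \right) = \epsilon^{d-1}$, and since $\sqrt{r(r-1)} \leq r$ and $\mathbbm{e} \leq \mathbbm{e}^{2}$, the right-hand side is at most $\mathbbm{e}^{2}\left( \epsilon + r \epsilon^{d} \right) \| {\boldsymbol \alpha} \|^2_2$, which is precisely the asserted inequality (the $e^2$ in the statement being $\mathbbm{e}^2$). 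The borderline case $\mu_{\mathcal{Y}} = 0$, which has probability zero for a genuinely continuous subgaussian distribution, is handled a fortiori by the sharper second display of Theorem~\ref{thm:ObliviousSubspaceEmbedded}, which yields the same bound.

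There is essentially no hard step here: the corollary is a bookkeeping combination of two results already established, and the only things to watch are (i) plugging $\mu = \epsilon$ into Lemma~\ref{lem:gaussian_incoherence} so that the ``$\max$'' inside Theorem~\ref{thm:ObliviousSubspaceEmbedded} collapses onto the $\epsilon^{d-1}$ branch, and (ii) the elementary tidying of constants ($\sqrt{r(r-1)} \leq r$, $\mathbbm{e} \leq \mathbbm{e}^{2}$) together with the rewriting of $2r^2 d \exp\!\left(-c\epsilon^2 n\right)$ as $\exp\!\left(-c'\epsilon^2 n\right)$ via the lower bound on $n$. The only mild subtlety worth flagging explicitly is that the two sources of randomness are distinct -- the randomness in $\mathcal{Y}$ controlling its coherence versus the randomness in the ${\bf A}_j$ -- so the probability in the statement should be understood as being over the draw of $\mathcal{Y}$ only, with the ${\bf A}_j$ fixed as in Theorem~\ref{thm:ObliviousSubspaceEmbedded}.
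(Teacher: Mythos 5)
Your proposal is correct and follows essentially the same route as the paper, whose entire proof is ``Apply Lemma~\ref{lem:gaussian_incoherence} with $\mu = \eps$''; you simply spell out the bookkeeping (collapsing the $\max$ onto the $\eps^{d-1}$ branch, absorbing $2r^2d$ into the exponent via the lower bound on $n$, and noting the two independent sources of randomness) that the paper leaves implicit.
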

\begin{proof} Apply Lemma~\ref{lem:gaussian_incoherence} with $\mu = \eps$. 
\end{proof}
\begin{cor} Let $\mathcal{Y}$ be a subgaussian tensor defined as in Lemma~\ref{lem:gaussian_incoherence}. If 
$$n := \min\limits_{i \in [d]} n_i \ge C r^{2/d} \log\left(\max\left(r,d\right)\right),$$ with probability at least $1 - \exp\left(-c'n/r^{2/d}\right)$, Lemma~\ref{lem:norm_comparison} gives a non-trivial lower bound $\|\mathcal{Y}\| \ge 0.99 \|{\boldsymbol \alpha}\|$. Here, $c' >0$ is an absolute constant.
 
In particular, the claim holds when $r\le C_1^d$ and $n \ge C_2 \max\{r,d\}$.
\end{cor}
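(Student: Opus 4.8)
The plan is to chain together the two preceding results: the coherence concentration bound of Lemma~\ref{lem:gaussian_incoherence} and the norm-comparison estimate of Lemma~\ref{lem:norm_comparison} (stated as Lemma~\ref{lem:coefintermsofYnorm}). Recall from Lemma~\ref{lem:norm_comparison} that whenever $(r-1)\mu_{\mathcal{Y}}^d < 1$ one has $\|{\boldsymbol \alpha}\|_2^2 \le \bigl(1 - (r-1)\mu_{\mathcal{Y}}^d\bigr)^{-1}\|\mathcal{Y}\|^2$. Hence, to force $\|\mathcal{Y}\| \ge 0.99\,\|{\boldsymbol \alpha}\|_2$ it suffices to guarantee $(r-1)\mu_{\mathcal{Y}}^d \le 1 - 0.99^2 = 0.0199$, and in fact $(r-1)\mu_{\mathcal{Y}}^d \le 1/51$ is enough since $\sqrt{50/51} > 0.99$. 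Accordingly I would aim for the coherence level $\mu := (51(r-1))^{-1/d}$, which exactly yields $(r-1)\mu^d = 1/51$ and, crucially, also satisfies $\mu^2 = (51(r-1))^{-2/d} \ge c_1\, r^{-2/d}$ for an absolute constant $c_1$ (e.g. $c_1 = 1/51$ works for all $d \ge 2$, using $(r-1)^{-2/d} \ge r^{-2/d}$ and $51^{-2/d} \ge 1/51$). The point of this choice is that the target coherence scales like $r^{-1/d}$, which keeps $(r-1)\mu^d$ bounded away from $1$ while keeping $\mu^2 \asymp r^{-2/d}$.

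Next I would apply Lemma~\ref{lem:gaussian_incoherence} with this value of $\mu$. It gives that, with probability at least $1 - 2r^2 d\exp(-c\mu^2 n) \ge 1 - 2r^2 d\exp(-c_0\, n\, r^{-2/d})$ (where $c_0 := c\,c_1$ absorbs the constant from $\mu^2 \ge c_1 r^{-2/d}$), the maximum modewise coherence of $\mathcal{Y}$ satisfies $\mu_{\mathcal{Y}} \le \mu$. On this event $(r-1)\mu_{\mathcal{Y}}^d \le (r-1)\mu^d = 1/51 < 1$, so Lemma~\ref{lem:norm_comparison} applies and produces $\|{\boldsymbol \alpha}\|_2^2 \le \tfrac{51}{50}\|\mathcal{Y}\|^2$, i.e. $\|\mathcal{Y}\| \ge \sqrt{50/51}\,\|{\boldsymbol \alpha}\|_2 \ge 0.99\,\|{\boldsymbol \alpha}\|_2$, which is the desired bound.

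It then remains to rewrite the failure probability $2r^2 d\exp(-c_0 n r^{-2/d})$ in the stated form. I would use $2r^2 d \le \exp(\log 2 + 2\log r + \log d) \le \exp\bigl(C_3 \log(\max(r,d))\bigr)$ for an absolute constant $C_3$ (valid once $\max(r,d) \ge 2$), together with the hypothesis $n \ge C\, r^{2/d}\log(\max(r,d))$ with $C$ chosen large enough that $c_0 C \ge 2C_3$. This makes $c_0 n r^{-2/d} \ge C_3\log(\max(r,d)) + \tfrac12 c_0 n r^{-2/d}$, so $2r^2 d\exp(-c_0 n r^{-2/d}) \le \exp(-\tfrac12 c_0 n r^{-2/d}) =: \exp(-c' n/r^{2/d})$, giving the claimed probability bound with $c' = c_0/2$. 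Finally, for the ``in particular'' clause: if $r \le C_1^d$ then $r^{2/d} \le C_1^2$, so $C\,r^{2/d}\log(\max(r,d)) \le C C_1^2 \log(\max(r,d)) \le C C_1^2 \max(r,d) =: C_2 \max(r,d)$ by $\log x \le x$, and hence $n \ge C_2\max(r,d)$ already implies the corollary's hypothesis.

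I do not expect a substantive obstacle here; the only care needed is in the bookkeeping of absolute constants — choosing the target coherence $\mu \asymp r^{-1/d}$ so that simultaneously $(r-1)\mu^d$ stays below $1$ and $\mu^2 n \gtrsim n/r^{2/d}$ remains in the exponent, and then absorbing the polynomial prefactor $2r^2 d$ into the exponential using the $\log(\max(r,d))$ lower bound on $n$. All of this is routine once the constants are fixed consistently.
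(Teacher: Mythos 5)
Your argument is correct and is essentially the paper's proof: the paper simply applies Lemma~\ref{lem:gaussian_incoherence} with $\mu = \left(0.01/(r-1)\right)^{1/d}$ and feeds the resulting coherence bound into Lemma~\ref{lem:norm_comparison}, exactly as you do with the immaterially different constant $\mu = (51(r-1))^{-1/d}$. Your explicit bookkeeping (checking $\mu^2 \gtrsim r^{-2/d}$, absorbing the $2r^2d$ prefactor via $n \ge C r^{2/d}\log(\max(r,d))$, and deducing the ``in particular'' clause) just fills in steps the paper leaves implicit.
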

\begin{proof} Apply Lemma~\ref{lem:gaussian_incoherence} with $\mu = \left(\frac{0.01}{r-1}\right)^{d^{-1}}$.
\end{proof}
\begin{remark}
Note that in the general case, when $r$ can be as large as $O(n^d)$, the $\mu_{\mathcal{Y}}$ estimate given in Lemma~\ref{lem:gaussian_incoherence} is not strong enough. Indeed, to have a non-trivial probability estimate, one must take $\mu > \sqrt{2d \log n/n})$. However, $\mu_{\mathcal{Y}} \sim \sqrt{d \log n/n} $ together with $r \sim n^d$ do not satisfy the condition of Lemma~\ref{lem:norm_comparison}, since $\left(r-1\right)\mu_{\mathcal{Y}}^d \sim (dn \log n)^{d/2} \gg 1$.

One could use alternative more sophisticated anti-concentration results instead of Lemma~\ref{lem:norm_comparison}. For example,  it was shown recently in \cite{vershynin2019concentration} that for any $r \le 0.99n^d$ and under some mild conditions,  $\|\mathcal{Y}\| \ge c n^{-d/2} \|{\boldsymbol \alpha}\|_2$ (in the independent subgaussian setting as discussed above). Note that this result contains additional non-favorable dependence on $n$. To the best of our knowledge, it is an open question whether general systems of independent (sub)gaussian vectors form tensors that satisfy norm anti-concentration like the one in Lemma~\ref{lem:norm_comparison}. See also the discussion in \cite{vershynin2019concentration}. 
\end{remark}

\section{Applications to Least Squares Problems and fitting CP models}
\label{sec:CPleastSquaresstuff}
Now, let us consider the following \emph{fitting} problem. Given tensor $\mathcal{X}$, which is suspected to have (approximately) low CP-rank $r$, we would like to find the rank-$r$ tensor $\mathcal{Y}$ in the standard form, as per~\eqref{equ:rankrTensor}, being closest to $\mathcal{X}$ in the tensor Euclidean norm. Although the $r$-dimensional basis (subspace) of $\mathcal{Y}$ is naturally unknown, a common way to tackle the fitting problem is to start with a randomly generated basis, and then update the basis tensors mode by mode improving the least square error. This brings us to a framework considered in the previous section: a tensor $\mathcal{Y}$ being in some fixed low-dimensional subspace at each step. Since this subspace is changing throughout the fitting process, the oblivious subspace dimension reduction technique is desirable. The fitting problem can be considered as a generalization of the embedding problem introduced in the previous section (with the addition of a potentially full-rank tensor $\mathcal{X}$ that is being approximated). 

In this section, we formalize the fitting problem and explain how we propose to use modewise dimension reduction for it. Then, we develop the machinery generalizing our methods from Section~\ref{cor:MainOblEmb} to incorporate an unknown tensor $\mathcal{X}$. Finally, we propose a more-sophisticated two-step dimension reduction process that further improves the resulting dimension for both embedding and fitting problems to almost log-optimal order $\mathcal{O}(r \eps^{-2})$.

As explained above, the common alternating least squares approach for fitting a low-rank CP decomposition along the lines of \eqref{equ:rankrTensor} to an arbitrary tensor $\mathcal{X} \in \mathbbm{C}^{n_1 \times \dots \times n_d}$ involves solving a sequence of least squares problems
\begin{equation}
\argmin_{\tilde{\bf y}^{(j)}_1, \dots, \tilde{\bf y}^{(j)}_r \in \mathbbm{C}^{n_j}} \left\| \mathcal{X} - \sum_{k=1}^r \alpha_k \bigcirc^d_{\ell = 1} {\bf y}^{(\ell)}_k \right \|
\label{equ:ALSbasic}
\end{equation}
for each $j \in [d]$ after fixing $\left \{ {\bf y}^{(\ell)}_k \right\}_{k\in [r], \ell \in [d] \setminus \{ j \} }$.  Here,  ${\bf y}^{(j)}_k = \tilde{\bf y}^{(j)}_k/\| \tilde{\bf y}^{(j)}_k\|_2$ $\forall j,k$ and $\alpha_k = \prod_{\ell = 1}^d \| \tilde{\bf y}^{(\ell)}_k\|_2$. One then varies $j$ through all values in $[d]$ computing \eqref{equ:ALSbasic} for each $j$ in order to update ${\bf y}^{(j)}_k$ $\forall j,k$ (potentially cycling through all $d$ modes many times).  This makes it particularly important to solve each least squares problem \eqref{equ:ALSbasic} efficiently.

Fix $j \in [d]$ and let ${\bf e}_h \in \mathbbm{C}^{n_j}$ be the $h^{th}$ column of the $n_j \times n_j$ identity matrix.  To see how our modewise tensor subspace embeddings can be of value for solving \eqref{equ:ALSbasic}, one can begin by noting that 
\begin{align*}
\left\| \mathcal{X} - \sum_{k=1}^r \alpha_k \bigcirc^d_{\ell = 1} {\bf y}^{(\ell)}_k \right \|^2 ~&=~ \left\| \mathbf{X}_{(j)} - \sum^r_{k = 1}  \alpha_k {\bf y}^{(j)}_k \left( \otimes_{\ell \neq j} {\bf y}^{(\ell)}_k \right)^\top \right\|_{\rm F}^2\\
&=~ \left\| \sum^{n_j}_{h=1} \left( \mathbf{X}^{(h)}_{(j)} - \sum^r_{k = 1}  \alpha_k {y}^{(j)}_{k,h} {\bf e}_h \left( \otimes_{\ell \neq j} {\bf y}^{(\ell)}_k \right)^\top \right) \right\|^2_{\rm F}
\end{align*}
where $\mathbf{X}_{(j)}$ denotes mode-$j$ matricization of $\mathcal{X}$, and all the rows of $\mathbf{X}^{(h)}_{(j)} \in \mathbbm{C}^{n_j \times \prod_{\ell \neq j} n_\ell}$ are zero except for its $h^{th}$-row which matches that of $\mathbf{X}_{(j)}$. We may now compute the squared Frobenius norm directly above row-wise and get that 
\begin{align*}
\left\| \mathcal{X} - \sum_{k=1}^r \alpha_k \bigcirc^d_{\ell = 1} {\bf y}^{(\ell)}_k \right \|^2 ~&=~ \sum^{n_j}_{h=1} \left\| \mathbf{x}_{j,h} - \sum^r_{k = 1}  \alpha_k {y}^{(j)}_{k,h} \left( \otimes_{\ell \neq j} {\bf y}^{(\ell)}_k \right) \right\|_{\rm F}^2\\
&=~ \sum^{n_j}_{h=1} \left\| \mathcal{X}^{(j,h)} - \sum^r_{k = 1}  \alpha_k {y}^{(j)}_{k,h} \bigcirc_{\ell \neq j} {\bf y}^{(\ell)}_k \right\|^2
\end{align*}
where $\mathbf{x}_{j,h} \in \mathbbm{C}^{\prod_{\ell \neq j} n_\ell}$ denotes the $h^{th}$-row of $\mathbf{X}_{(j)}$, and $\mathcal{X}^{(j,h)}$ its tensorized version.
As a consequence, \eqref{equ:ALSbasic} can be decoupled into $n_j$ separate least squares problems of the form
\begin{equation}
\argmin_{{\boldsymbol \alpha'_{j,h}} \in \mathbbm{C}^{r}} \left\| \mathcal{X}^{(j,h)} - \sum_{k=1}^r \alpha'_{j,h,k} \bigcirc^d_{\ell \neq j} {\bf y}^{(\ell)}_k \right \|
\label{equ:DecoupledALS}
\end{equation}
each involving one $(d-1)$-mode mode-$j$ slice, $\mathcal{X}^{(j,h)}$, of the original tensor $\mathcal{X}$.\footnote{$\mathcal{X}^{(j,h)}$ is in fact the $h^{\rm th}$ mode-$j$ slice of $\mathcal{X}$.} Here $\alpha'_{j,h,k} :=  \alpha_k {y}^{(j)}_{k,h}$ where $\alpha_k$ is known $\forall k \in [r]$ from \eqref{equ:ALSbasic}.  Note also that these $n_j$ separate least squares problems can,  if desired, be solved in parallel for each different $h \in [n_j]$.  

In order to solve each least squares problem \eqref{equ:DecoupledALS} we can now utilize modewise JL embeddings and instead solve the smaller least squares problem
\begin{equation}
\argmin_{{\boldsymbol \alpha'_{j,h}} \in \mathbbm{C}^{r}} \left\| \mathcal{X}^{(j,h)} \bigtimes_{\ell \neq j} {\bf A}_\ell  - \sum_{k=1}^r \alpha'_{j,h,k} \bigcirc^d_{\ell \neq j} {\bf y}^{(\ell)}_k \bigtimes_{\ell \neq j} {\bf A}_\ell \right \|
\label{equ:compDecoupledALS}
\end{equation}
provided that the $\left\{ {\bf y}^{(\ell)}_k \right\}_{k \in [r]}$ are sufficiently incoherent for all $\ell \in [d]\setminus \{j\}$ (an easy to check condition).  We can then update each entry of $\tilde {\bf y}^{(j)}_k$ by setting $\tilde {y}^{(j)}_{k,h} = \alpha'_{j,h,k} / \alpha_k$ for all $h \in [n_j]$ and $k \in [r]$.

\subsection{General Modewise JL embeddings for Tensors with Low Modewise Coherence}
\label{sec:GenModewiseLSsec}

We prove that the method described above works in the following Theorem~\ref{thm:GenmodewiseConstruction}, showing that the solution to \eqref{equ:compDecoupledALS} will be close to that of \eqref{equ:DecoupledALS} in terms of quality if the matrices ${\bf A}_j$ are chosen from appropriate $\eta$-optimal JL families of distributions:
\begin{thm}
Fix $\epsilon, \eta \in (0,1/2)$ and $d \geq 3$.  Let $\mathcal{X} \in \mathbbm{C}^{n_1 \times \dots \times n_d}$, $n := \max\limits_{j} n_j \geq 4r+1$, and $\mathcal{L}$ be an $r$-dimensional subspace of $\mathbbm{C}^{n_1 \times \dots \times n_d}$ spanned by a basis $\mathcal{B} := \left \{ \bigcirc^d_{\ell = 1} {\bf y}^{(\ell)}_k ~\big|~ k \in [r] \right\}$ of rank-$1$ tensors, with modewise coherence satisfying $\mu_\mathcal{B}^{d-1} < 1/{2r}$.  For each $j \in [d]$ draw ${\bf A}_j \in \mathbbm{C}^{m_j \times n_j}$ with 
\begin{equation}\label{eq:rDepTwo}
m_j \geq C_j \cdot r d^3/\epsilon^2 \cdot \ln \left( n / \sqrt[d]{\eta}\right)
\end{equation}
from an $(\eta/4d)$-optimal family of JL embedding distributions, where $C_j \in \mathbbm{R}^+$ is an absolute constant.  Furthermore, let ${\bf A} \in \mathbbm{C}^{m' \times \prod^d_{\ell=1} m_\ell}$ with 
$$m' \geq C' r \cdot \epsilon^{-2} \cdot \ln \left( \frac{47}{\epsilon \sqrt[r]{\eta}} \right)$$
be drawn from an $(\eta/2)$-optimal family of JL embedding distributions, where $C' \in \mathbbm{R}^+$ is an absolute constant.  Define $\tilde{L}: \mathbbm{C}^{n_1 \times \dots \times n_d} \rightarrow \mathbbm{C}^{m_1 \times \dots \times m_d}$ by $L(\mathcal{Z}) = \mathcal{Z} \times_1 {\bf A}_1 \dots \times_d {\bf A}_d$.  Then, with probability at least $1 - \eta$, the linear operator ${\bf A} \circ \mathrm{vect} \circ \tilde{L}:  \mathbbm{C}^{n_1 \times \dots \times n_d}  \rightarrow \mathbbm{C}^{m'}$ satisfies
\begin{equation*}
\left| \left\| {\bf A} \left(\mathrm{vect} \circ \tilde{L}\left( \mathcal{X} - \mathcal{Y} \right) \right) \right\|^2_2 - \left\|  \mathcal{X} - \mathcal{Y} \right\|^2 \right| \leq \epsilon \left\|  \mathcal{X} - \mathcal{Y} \right\|^2
\end{equation*}
for all $\mathcal{Y} \in \mathcal{L}$.  
\label{thm:GenmodewiseConstruction}
\end{thm}

\begin{remark}[About $r$ and $\eps$ Dependence]
\label{Rem:Repsdepthm4}
Fix $d, n,$ and $\eta$.  Looking at Theorem~\ref{thm:GenmodewiseConstruction} we can see that it's intermediate embedding dimension is 
$$\prod_{\ell = 1}^d m_\ell \leq  C_{d,\eta,n}^d r^d \eps^{-2d}$$ 
which effectively determines its overall storage complexity. 
Hence, Theorem~\ref{thm:GenmodewiseConstruction} will only result in an improved memory complexity over the straightforward single-stage vectorization approach if, e.g., the rank $r$ of $\mathcal{L}$ is relatively small.
The purpose of facultative vectorization and subsequent multiplication by an additional JL transform $\bf{ A}$ in Theorem~\ref{thm:GenmodewiseConstruction} is to reduce the resulting final embedding dimension to the near-optimal order $\mathcal{O}(r/\eps^2)$ from total dimension $\mathcal{O}_{\eta,n}(d^{3d} r^d \eps^{-2d})$ that we have after the modewise compression.
\end{remark}

In order to prove Theorem~\ref{thm:GenmodewiseConstruction}, we first establish that $\left\| \mathcal{X}^{(j,h)} \bigtimes_{\ell \neq j} {\bf A}_\ell \right\| \approx \left\| \mathcal{X}^{(j,h)}\right\|$ can also hold for all $j \in [d]$ and $h \in [n_j]$.  This is shown in the following lemma which is proven in Appendix~\ref{AppMinSuppProofs}.

\begin{lem}
Let $\epsilon \in (0, 1)$, $\mathcal{Z}^{(1)}, \dots, \mathcal{Z}^{(p)} \in \mathbbm{C}^{n_1 \times \dots \times n_d}$, and ${\bf A}_1 \in \mathbbm{C}^{m_1 \times n_1}$ be an $\left(\epsilon / \mathbbm{e}d \right)$-JL embedding of the all $p \left( \prod_{\ell = 2}^d n_\ell \right)$ mode-$1$ fibers of all $p$ of these tensors,
$$\mathcal{S}_1 := \bigcup_{t \in [p]}\left\{ \mathcal{Z}^{(t)}_{:,i_2,\dots,i_{d}}~|~ \forall i_{\ell} \in [n_\ell],~\ell \in [d] \setminus \{1\} \right\}\subset \mathbbm{C}^{n_1},$$
into $\mathbbm{C}^{m_1}$.  Next, set $\mathcal{Z}^{(1,t)} := \mathcal{Z}^{(t)} \times_1 {\bf A}_1 \in \mathbbm{C}^{m_1 \times n_2 \times \dots \times n_d}$~$\forall t \in [p]$, and then let ${\bf A}_2 \in \mathbbm{C}^{m_2 \times n_2}$ be an $\left(\epsilon / \mathbbm{e}d \right)$-JL embedding of all $p\left(m_1\prod_{\ell = 3}^d n_\ell \right)$ mode-$2$ fibers 
$$\mathcal{S}_2 := \bigcup_{t \in [p]} \left\{ \mathcal{Z}^{(1,t)}_{i_1,:,i_3,\dots,i_{d}}~|~ \forall {i_1} \in [m_1] ~\&~i_{\ell} \in [n_\ell],~\ell \in [d] \setminus [2] ~\right\}\subset \mathbbm{C}^{n_2}$$ 
into $\mathbbm{C}^{m_2}$. Continuing inductively, for each $j \in [d] \setminus [2]$ and $t \in [p]$ set $\mathcal{Z}^{(j-1,t)} := \mathcal{Z}^{(j-2,t)} \times_{j-1} {\bf A}_{j-1} \in \mathbbm{C}^{m_1 \times \dots \times m_{j-1} \times n_j \times \dots \times n_d}$, and then let ${\bf A}_j \in \mathbbm{C}^{m_j \times n_j}$ be an $\left(\epsilon / \mathbbm{e}d \right)$-JL embedding of all $p\left(\prod^{j-1}_{\ell = 1}m_\ell \right) \left( \prod_{\ell = j+1}^d n_\ell \right)$ mode-$j$ fibers 
$$\mathcal{S}_j := \bigcup_{t \in [p]} \left\{ \mathcal{Z}^{(j-1,t)}_{i_1,\dots,i_{j-1},:,i_{j+1},\dots,i_{d}}~|~ \forall {i_\ell} \in [m_\ell], \ell \in [j-1] ~\&~i_{\ell} \in [n_\ell],\ell \in [d] \setminus [j], ~\right\}\subset \mathbbm{C}^{n_j}$$ 
into $\mathbbm{C}^{m_j}$.
Then,
\begin{align*}
\left| \left\| \mathcal{Z}^{(t)} \right\|^2 - \left\| \mathcal{Z}^{(t)} \times_1 {\bf A}_1 \dots \times_d {\bf A}_d \right\|^2 \right| ~&\leq~ \epsilon \left\| \mathcal{Z}^{(t)} \right\|^2
\end{align*}
will hold for all $t \in [p]$.
\label{lem:IndivTensorJL}
\end{lem}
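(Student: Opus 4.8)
The plan is to exploit two elementary facts about fibers. First, for any tensor $\mathcal{W} \in \mathbbm{C}^{p_1 \times \dots \times p_d}$ and any mode $j$, the squared norm $\|\mathcal{W}\|^2$ equals $\|\mathbf{W}_{(j)}\|_{\rm F}^2$, which is the sum of the squared $\ell^2$-norms of the $\prod_{\ell \neq j} p_\ell$ mode-$j$ fibers of $\mathcal{W}$ (these fibers are exactly the columns of the matricization $\mathbf{W}_{(j)}$). Second, by \eqref{unfolding_via_product} we have $\left(\mathcal{W} \times_j \mathbf{A}_j\right)_{(j)} = \mathbf{A}_j \mathbf{W}_{(j)}$, so $\|\mathcal{W} \times_j \mathbf{A}_j\|^2 = \|\mathbf{A}_j \mathbf{W}_{(j)}\|_{\rm F}^2 = \sum_{\mathbf{f}} \|\mathbf{A}_j \mathbf{f}\|_2^2$, the sum again running over the mode-$j$ fibers $\mathbf{f}$ of $\mathcal{W}$. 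In words, a $j$-mode product by $\mathbf{A}_j$ acts on each mode-$j$ fiber separately, so if $\mathbf{A}_j$ is a JL embedding of all of those fibers then it preserves the squared norm up to the JL distortion.

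Concretely, fix $t \in [p]$, set $\mathcal{Z}^{(0,t)} := \mathcal{Z}^{(t)}$, and (consistently with the statement) $\mathcal{Z}^{(j,t)} := \mathcal{Z}^{(j-1,t)} \times_j \mathbf{A}_j$ for $j \in [d]$, so that $\mathcal{Z}^{(d,t)} = \mathcal{Z}^{(t)} \times_1 \mathbf{A}_1 \dots \times_d \mathbf{A}_d$. By its very definition the set $\mathcal{S}_j$ over which $\mathbf{A}_j$ is assumed to be an $(\epsilon/\mathbbm{e}d)$-JL embedding contains every mode-$j$ fiber of $\mathcal{Z}^{(j-1,t)}$. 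Hence, writing $\|\mathbf{A}_j \mathbf{f}\|_2^2 = (1 + \epsilon_{\mathbf{f}})\|\mathbf{f}\|_2^2$ with $|\epsilon_{\mathbf{f}}| < \epsilon/\mathbbm{e}d$ for each such fiber and summing over all mode-$j$ fibers of $\mathcal{Z}^{(j-1,t)}$ via the two facts above, I get
\[
\left(1 - \frac{\epsilon}{\mathbbm{e}d}\right)\left\| \mathcal{Z}^{(j-1,t)} \right\|^2 ~\leq~ \left\| \mathcal{Z}^{(j,t)} \right\|^2 ~\leq~ \left(1 + \frac{\epsilon}{\mathbbm{e}d}\right)\left\| \mathcal{Z}^{(j-1,t)} \right\|^2 \qquad \text{for every } j \in [d].
\]
Telescoping these $d$ inequalities yields $\left(1 - \frac{\epsilon}{\mathbbm{e}d}\right)^d \|\mathcal{Z}^{(t)}\|^2 \leq \|\mathcal{Z}^{(d,t)}\|^2 \leq \left(1 + \frac{\epsilon}{\mathbbm{e}d}\right)^d \|\mathcal{Z}^{(t)}\|^2$.

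To close the estimate I would invoke two one-line bounds. Bernoulli's inequality gives $\left(1 - \frac{\epsilon}{\mathbbm{e}d}\right)^d \geq 1 - \frac{\epsilon}{\mathbbm{e}} \geq 1 - \epsilon$, and Lemma~\ref{lem:IknowCalculus} (applied with $c = \epsilon/\mathbbm{e}$) gives $\left(1 + \frac{\epsilon}{\mathbbm{e}d}\right)^d \leq \mathbbm{e}^{\epsilon/\mathbbm{e}} \leq 1 + \epsilon$, the final step because $x \mapsto \mathbbm{e}^{x/\mathbbm{e}} - 1 - x$ is convex on $[0,1]$ and nonpositive at both endpoints. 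Combining, $\left|\|\mathcal{Z}^{(t)}\|^2 - \|\mathcal{Z}^{(d,t)}\|^2\right| \leq \epsilon \|\mathcal{Z}^{(t)}\|^2$ for all $t \in [p]$, which is exactly the claim.

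There is no genuine obstacle here; the argument is essentially a telescoping of per-mode JL guarantees. The only points requiring care are the bookkeeping that the mode-$j$ fibers of the partially compressed tensor $\mathcal{Z}^{(j-1,t)} \in \mathbbm{C}^{m_1 \times \dots \times m_{j-1} \times n_j \times \dots \times n_d}$ are precisely the vectors enumerated in $\mathcal{S}_j$ (those indexed by $i_\ell \in [m_\ell]$ for $\ell < j$ and $i_\ell \in [n_\ell]$ for $\ell > j$), and the observation that the constant $\mathbbm{e}$ baked into the per-mode tolerance $\epsilon/\mathbbm{e}d$ is exactly what makes $\left(1 + \frac{\epsilon}{\mathbbm{e}d}\right)^d \leq 1 + \epsilon$ hold uniformly in $d$.
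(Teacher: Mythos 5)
Your proof is correct and follows essentially the same route as the paper's: both rest on the observation that a $j$-mode product acts fiber-wise (so the per-mode JL hypothesis on $\mathcal{S}_j$ gives an $\epsilon/\mathbbm{e}d$ distortion of the squared norm at each of the $d$ steps), combined with Lemma~\ref{lem:IknowCalculus} to control the accumulated distortion. The only cosmetic difference is in the final accounting: the paper telescopes the sum of per-mode differences after inductively bounding the intermediate norms by $\left(1+\frac{\epsilon}{\mathbbm{e}d}\right)^{j}\left\|\mathcal{Z}^{(t)}\right\|^2$, whereas you multiply the two-sided factors to get $\left(1\pm\frac{\epsilon}{\mathbbm{e}d}\right)^{d}$ and close with Bernoulli's inequality and the (correct) elementary bound $\mathbbm{e}^{\epsilon/\mathbbm{e}} \leq 1+\epsilon$ for $\epsilon \in (0,1)$; both closings are valid.
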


With Lemma~\ref{lem:IndivTensorJL} in hand we can now prove that the solution to \eqref{equ:compDecoupledALS} will be close to that of \eqref{equ:DecoupledALS} in terms of quality if the matrices ${\bf A}_j$ are chosen appropriately.  We have the following general result which directly applies to least squares problems as per \eqref{equ:compDecoupledALS} when $\tilde{L}(\mathcal{Z}) := \mathcal{Z} \bigtimes_{\ell \neq j} {\bf A}_\ell $ and ${\bf A} = {\bf I}$.

\begin{thm}[Embeddings for Compressed Least Squares]
Let $\mathcal{X} \in \mathbbm{C}^{n_1 \times \dots \times n_d}$, $\mathcal{L}$ be an $r$-dimensional subspace of $\mathbbm{C}^{n_1 \times \dots \times n_d}$ spanned by a set of orthonormal basis tensors $\{ \mathcal{T}_k \}_{k \in [r]}$, and $\mathbbm{P}_{\mathcal{L}^\perp}: \mathbbm{C}^{n_1 \times \dots \times n_d} \rightarrow \mathbbm{C}^{n_1 \times \dots \times n_d}$ be the orthogonal projection operator on the orthogonal complement of $\mathcal{L}$.  Fix $\epsilon \in (0,1)$ and suppose that the linear operator $\tilde{L}: \mathbbm{C}^{n_1\times n_2\times ... \times n_d} \rightarrow \mathbbm{C}^{m_1 \times \dots \times m_{d'}}$ has both of the following properties:
\begin{enumerate}
\item[(i)] $\tilde{L}$ is an $(\epsilon/6)$-JL embedding of all $\mathcal{Y} \in \mathcal{L} \cup \left\{ \mathbbm{P}_{\mathcal{L}^\perp}(\mathcal{X}) \right\}$ into $\mathbbm{C}^{m_1 \times \dots \times m_{d'}}$, and
\item[(ii)] $\tilde{L}$ is an $(\epsilon/24\sqrt{r})$-JL embedding of the $4r$ tensors
$$\mathcal{S}' := \bigcup_{k \in [r]} \left\{  \frac{\mathbbm{P}_{\mathcal{L}^\perp}(\mathcal{X})}{ \left \| \mathbbm{P}_{\mathcal{L}^\perp}(\mathcal{X}) \right \|} - \mathcal{T}_k,  \frac{\mathbbm{P}_{\mathcal{L}^\perp}(\mathcal{X})}{ \left \| \mathbbm{P}_{\mathcal{L}^\perp}(\mathcal{X}) \right \|} + \mathcal{T}_k, \frac{\mathbbm{P}_{\mathcal{L}^\perp}(\mathcal{X})}{ \left \| \mathbbm{P}_{\mathcal{L}^\perp}(\mathcal{X}) \right \|} - \mathbbm{i}\mathcal{T}_k,  \frac{\mathbbm{P}_{\mathcal{L}^\perp}(\mathcal{X})}{ \left \| \mathbbm{P}_{\mathcal{L}^\perp}(\mathcal{X}) \right \|} + \mathbbm{i}\mathcal{T}_k \right\} \subset \mathbbm{C}^{n_1\times n_2\times ... \times n_d}$$
into $\mathbbm{C}^{m_1 \times \dots \times m_{d'}}$.
\end{enumerate}
Furthermore, let $\mathrm{vect}: \mathbbm{C}^{m_1 \times \dots \times m_{d'}} \rightarrow \mathbbm{C}^{\prod^{d'}_{\ell = 1} m_\ell}$ be a reshaping vectorization operator, and ${\bf A} \in \mathbbm{C}^{m \times \prod^{d'}_{\ell = 1} m_\ell}$ be an $(\epsilon/3)$-JL embedding of the $(r+1)$-dimensional subspace 
$$\mathcal{L}' := \mathrm{span} \left\{ \mathrm{vect} \circ \tilde{L}\left(\mathbbm{P}_{\mathcal{L}^\perp}(\mathcal{X}) \right),~ \mathrm{vect} \circ \tilde{L}\left( \mathcal{T}_1 \right), ~\dots,~ \mathrm{vect} \circ \tilde{L}\left( \mathcal{T}_r \right) \right\} \subset \mathbbm{C}^{\prod^{d'}_{\ell = 1} m_\ell}$$
into $\mathbbm{C}^{m}$. Then,
\begin{equation*}
\left| \left\| {\bf A} \left(\mathrm{vect} \circ \tilde{L}\left(  \mathcal{X} - \mathcal{Y} \right) \right) \right\|^2_2 - \left\|  \mathcal{X} - \mathcal{Y} \right\|^2 \right| \leq \epsilon \left\|  \mathcal{X} - \mathcal{Y} \right\|^2
\end{equation*}
holds for all $\mathcal{Y} \in \mathcal{L}$.
\label{thm:SubspaceEmbedResult}
\end{thm}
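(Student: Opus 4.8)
The plan is to decompose $\mathcal{X}-\mathcal{Y}$ into its component in $\mathcal{L}^\perp$ and its component in $\mathcal{L}$, control each of these under the map $L$ using hypotheses (i) and (ii) respectively, and then absorb the final ${\bf A}$-distortion. We may assume $\mathcal{P}:=\mathbbm{P}_{\mathcal{L}^\perp}(\mathcal{X})\neq{\bf 0}$, since otherwise $\mathcal{X}-\mathcal{Y}\in\mathcal{L}$ for every $\mathcal{Y}\in\mathcal{L}$ and the claim follows at once from (i), from ${\bf A}$ being an $(\epsilon/3)$-JL embedding of $\mathrm{span}\{\mathrm{vect}\circ L(\mathcal{T}_k)\}_{k\in[r]}$, and from $\epsilon\in(0,1)$. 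So fix $\mathcal{Y}\in\mathcal{L}$ and set $\mathcal{W}:=\mathbbm{P}_{\mathcal{L}}(\mathcal{X})-\mathcal{Y}$; then $\mathcal{W}\in\mathcal{L}$, $\mathcal{X}-\mathcal{Y}=\mathcal{P}+\mathcal{W}$, $\langle\mathcal{P},\mathcal{W}\rangle=0$, and hence $\|\mathcal{X}-\mathcal{Y}\|^2=\|\mathcal{P}\|^2+\|\mathcal{W}\|^2$. By linearity, $\mathrm{vect}\circ L(\mathcal{X}-\mathcal{Y})=\mathrm{vect}\circ L(\mathcal{P})+\mathrm{vect}\circ L(\mathcal{W})$ lies in $\mathcal{L}'$, so because ${\bf A}$ is an $(\epsilon/3)$-JL embedding of $\mathcal{L}'$ and $\mathrm{vect}$ is norm-preserving, it is enough to show
\begin{equation*}
\left| \left\| L(\mathcal{P}+\mathcal{W}) \right\|^2 - \left\| \mathcal{X}-\mathcal{Y} \right\|^2 \right| \leq \frac{\epsilon}{3} \left\| \mathcal{X}-\mathcal{Y} \right\|^2 .
\end{equation*}
The theorem then follows by composing the two multiplicative distortions and noting $(1+\epsilon/3)^2-1\le 7\epsilon/9<\epsilon$ (and similarly $1-(1-\epsilon/3)^2<\epsilon$) for $\epsilon\in(0,1)$.

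To prove the displayed inequality I would expand
\begin{equation*}
\left\| L(\mathcal{P}+\mathcal{W}) \right\|^2 = \left\| L(\mathcal{P}) \right\|^2 + \left\| L(\mathcal{W}) \right\|^2 + 2\,\mathrm{Re}\left\langle L(\mathcal{P}), L(\mathcal{W}) \right\rangle .
\end{equation*}
Hypothesis (i) immediately gives $\bigl| \|L(\mathcal{P})\|^2 - \|\mathcal{P}\|^2 \bigr| \le \frac{\epsilon}{6}\|\mathcal{P}\|^2$ and $\bigl| \|L(\mathcal{W})\|^2 - \|\mathcal{W}\|^2 \bigr| \le \frac{\epsilon}{6}\|\mathcal{W}\|^2$ (the latter since $\mathcal{W}\in\mathcal{L}$). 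For the cross term, write $\mathcal{W}=\sum_{k=1}^r w_k \mathcal{T}_k$, so that $\|{\bf w}\|_2 = \|\mathcal{W}\|$ by orthonormality of $\{\mathcal{T}_k\}$. Applying Lemma~\ref{lem:GenTenInnProdJL} to each four-tensor block of $\mathcal{S}'$ (which is licensed by hypothesis (ii)) together with $\langle \mathcal{P},\mathcal{T}_k\rangle = 0$ yields $\bigl| \langle L(\mathcal{P}/\|\mathcal{P}\|), L(\mathcal{T}_k)\rangle \bigr| \le 4\cdot\frac{\epsilon}{24\sqrt r}$, and hence $\bigl| \langle L(\mathcal{P}), L(\mathcal{T}_k)\rangle \bigr| \le \frac{\epsilon}{6\sqrt r}\|\mathcal{P}\|$ for each $k$. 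Cauchy--Schwarz then gives $\bigl| \langle L(\mathcal{P}), L(\mathcal{W})\rangle \bigr| \le \|{\bf w}\|_1 \cdot \frac{\epsilon}{6\sqrt r}\|\mathcal{P}\| \le \frac{\epsilon}{6}\|\mathcal{W}\|\,\|\mathcal{P}\|$, and the AM--GM inequality $2\|\mathcal{W}\|\,\|\mathcal{P}\|\le\|\mathcal{W}\|^2+\|\mathcal{P}\|^2$ gives $\bigl| 2\,\mathrm{Re}\,\langle L(\mathcal{P}), L(\mathcal{W})\rangle \bigr| \le \frac{\epsilon}{6}(\|\mathcal{P}\|^2+\|\mathcal{W}\|^2)$. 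Summing the three estimates bounds the left-hand side of the displayed inequality by $\frac{\epsilon}{3}(\|\mathcal{P}\|^2+\|\mathcal{W}\|^2)=\frac{\epsilon}{3}\|\mathcal{X}-\mathcal{Y}\|^2$, as required.

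I expect the cross term $\langle L(\mathcal{P}), L(\mathcal{W})\rangle$ to be the only genuine obstacle: because $L$ merely \emph{approximately} preserves norms, a crude triangle-inequality bound on it loses a factor of $\sqrt r$ when one replaces $\|{\bf w}\|_1$ by $\|{\bf w}\|_2$, and this is precisely why hypothesis (ii) is imposed at the finer accuracy $\epsilon/(24\sqrt r)$ --- so that the $\sqrt r$ coming from Cauchy--Schwarz cancels the $1/\sqrt r$ produced by Lemma~\ref{lem:GenTenInnProdJL}. Everything else is bookkeeping: verifying that the chosen accuracies $\epsilon/6$, $\epsilon/(24\sqrt r)$, and $\epsilon/3$ for the three JL embeddings compose to a total multiplicative distortion below $\epsilon$, and dispatching the degenerate case $\mathbbm{P}_{\mathcal{L}^\perp}(\mathcal{X})={\bf 0}$ (where $\mathcal{S}'$ and the spanning set of $\mathcal{L}'$ collapse) separately.
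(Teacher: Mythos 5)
Your proposal is correct and follows essentially the same route as the paper's proof: the same orthogonal decomposition $\mathcal{X}-\mathcal{Y}=\mathbbm{P}_{\mathcal{L}^\perp}(\mathcal{X})+\left(\mathbbm{P}_{\mathcal{L}}(\mathcal{X})-\mathcal{Y}\right)$, the same use of Lemma~\ref{lem:GenTenInnProdJL} with hypothesis (ii) to bound the cross term by $\frac{\epsilon}{6}\|\mathcal{P}\|\,\|\mathcal{W}\|$ via the $\ell^1$--$\ell^2$ step, and the same reduction showing $L$ is an $(\epsilon/3)$-JL embedding of $\{\mathcal{X}-\mathcal{Y}:\mathcal{Y}\in\mathcal{L}\}$ before absorbing the distortion of ${\bf A}$ on $\mathcal{L}'$. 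The only differences are cosmetic (you compose multiplicative factors where the paper uses a triangle inequality through $\|L(\mathcal{X}-\mathcal{Y})\|^2$, and you explicitly dispatch the degenerate case $\mathbbm{P}_{\mathcal{L}^\perp}(\mathcal{X})=\mathbf{0}$, which the paper leaves implicit).
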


\begin{proof}
Note that the theorem will be proven if $\tilde{L}$ is an $(\epsilon / 3)$--JL embedding of all tensors of the form $\left\{ \mathcal{X} - \mathcal{Y} ~\big|~ \mathcal{Y} \in \mathcal{L} \right\}$ into $\mathbbm{C}^{m_1 \times \dots \times m_{d'}}$ since any such tensor $\mathcal{X} - \mathcal{Y}$ will also have $\mathrm{vect} \circ \tilde{L} \left( \mathcal{X} - \mathcal{Y} \right) \in \mathcal{L}' $ so that
\begin{align*}
\Big| \left\| {\bf A} \left(\mathrm{vect} \circ \tilde{L}\left(  \mathcal{X} - \mathcal{Y} \right) \right) \right\|^2_2 &- \left\|  \mathcal{X} - \mathcal{Y} \right\|^2 \Big| \\
&\leq~ \left| \left\| {\bf A} \left(\mathrm{vect} \circ \tilde{L}\left(  \mathcal{X} - \mathcal{Y} \right) \right) \right\|^2_2 - \left\| \tilde{L} \left( \mathcal{X} - \mathcal{Y} \right) \right\|^2 \right| + \left| \left\| \tilde{L} \left( \mathcal{X} - \mathcal{Y} \right) \right\|^2 - \left\|  \mathcal{X} - \mathcal{Y} \right\|^2 \right|\\
&\leq~ \left| \left\| {\bf A} \left(\mathrm{vect} \circ \tilde{L}\left(  \mathcal{X} - \mathcal{Y} \right) \right) \right\|^2_2 - \left\| \mathrm{vect} \circ  \tilde{L} \left( \mathcal{X} - \mathcal{Y} \right) \right\|^2_2 \right| + \frac{\epsilon}{3}\left\|  \mathcal{X} - \mathcal{Y} \right\|^2 \\
&\leq~ \frac{\epsilon}{3} \left\| \mathrm{vect} \circ  \tilde{L} \left( \mathcal{X} - \mathcal{Y} \right) \right\|^2_2 + \frac{\epsilon}{3}\left\|  \mathcal{X} - \mathcal{Y} \right\|^2 \\
&=~ \frac{\epsilon}{3} \left\| \tilde{L} \left( \mathcal{X} - \mathcal{Y} \right) \right\|^2 + \frac{\epsilon}{3}\left\|  \mathcal{X} - \mathcal{Y} \right\|^2 \\
&\leq~\frac{\epsilon}{3} \left( 1 + \frac{\epsilon}{3} \right)  \left\|  \mathcal{X} - \mathcal{Y} \right\|^2 + \frac{\epsilon}{3} \left\|  \mathcal{X} - \mathcal{Y} \right\|^2 ~\leq~  \epsilon \left\|  \mathcal{X} - \mathcal{Y} \right\|^2.
\end{align*}

Let $\mathbbm{P}_{\mathcal{L}}$ be the orthogonal projection operator onto $\mathcal{L}$.  Our first step in establishing that $\tilde{L}$ is an $(\epsilon/3)$--JL embedding of all tensors of the form $\left\{ \mathcal{X} - \mathcal{Y} ~\big|~ \mathcal{Y} \in \mathcal{L} \right\}$ into $\mathbbm{C}^{m_1 \times \dots \times m_{d'}}$ will be to show that $\tilde{L}$ preserves all the angles between $\mathbbm{P}_{\mathcal{L}^\perp}(\mathcal{X})$ and $\mathcal{L}$ well enough that the Pythagorean theorem 
$$\| \mathcal{X} - \mathcal{Y} \|^2 ~=~ \| \mathbbm{P}_{\mathcal{L}^\perp}(\mathcal{X}) + \mathbbm{P}_{\mathcal{L}} \left(\mathcal{X} \right) - \mathcal{Y} \|^2 ~=~ \| \mathbbm{P}_{\mathcal{L}^\perp}(\mathcal{X}) \|^2 + \| \mathbbm{P}_{\mathcal{L}} \left(\mathcal{X} \right) - \mathcal{Y} \|^2$$
still approximately holds for all $\mathcal{Y} \in \mathcal{L}$ after $\tilde{L}$ is applied.  Toward that end, let $\boldsymbol{\gamma} \in \mathbbm{C}^r$ be such that $\mathbbm{P}_{\mathcal{L}} \left(\mathcal{X} \right) - \mathcal{Y} = \sum_{k \in [r]} \gamma_k \mathcal{T}_k$ and note that $\| \boldsymbol{\gamma} \|_2 = \left\| \mathbbm{P}_{\mathcal{L}} \left(\mathcal{X} \right) - \mathcal{Y} \right\|$ due to the orthonormality of $\{ \mathcal{T}_k \}_{k \in [r]}$.  Appealing to Lemma~\ref{lem:InnProdJL} we now have that
\begin{align}
\left| \left \langle \tilde{L} \left( \mathbbm{P}_{\mathcal{L}} \left(\mathcal{X} \right) - \mathcal{Y} \right),~ \tilde{L} \left( \mathbbm{P}_{\mathcal{L}^\perp}(\mathcal{X}) \right) \right \rangle \right| ~&=~ \left \| \mathbbm{P}_{\mathcal{L}^\perp}(\mathcal{X}) \right \| \left| \sum_{k \in [r]} \gamma_k \left \langle \tilde{L} \left( \mathcal{T}_k \right),~ \tilde{L} \left( \frac{\mathbbm{P}_{\mathcal{L}^\perp}(\mathcal{X})}{ \left \| \mathbbm{P}_{\mathcal{L}^\perp}(\mathcal{X}) \right \|} \right) \right \rangle \right| \nonumber \\ 
&\leq~ \left \| \mathbbm{P}_{\mathcal{L}^\perp}(\mathcal{X}) \right \| \left(\frac{\epsilon}{6\sqrt{r}} \right) \sum_{k \in [r]} \left|\gamma_k \right| ~\leq~  \frac{\epsilon}{6} \left \| \mathbbm{P}_{\mathcal{L}^\perp}(\mathcal{X}) \right \| \| \boldsymbol{\gamma} \|_2 \label{equ:LkillsCrossterm} \\
&\leq \frac{\epsilon}{12} \left( \left\| \mathbbm{P}_{\mathcal{L}^\perp}(\mathcal{X}) \right \|^2 + \left\| \mathbbm{P}_{\mathcal{L}} \left(\mathcal{X} \right) - \mathcal{Y} \right\|^2 \right)~=~ \frac{\epsilon}{12} \| \mathcal{X} - \mathcal{Y} \|^2. \nonumber
\end{align}

Using \eqref{equ:LkillsCrossterm} we can now see that
\begin{align*}
\Big| \left\| \tilde{L}\left(  \mathcal{X} - \mathcal{Y} \right) \right\|^2_2 &- \left\|  \mathcal{X} - \mathcal{Y} \right\|^2 \Big|\\ &=~ \left| \left\| \tilde{L}\left(  \mathcal{X} - \mathcal{Y} \right) \right\|^2_2 -  \| \mathbbm{P}_{\mathcal{L}^\perp}(\mathcal{X}) \|^2 - \| \mathbbm{P}_{\mathcal{L}} \left(\mathcal{X} \right) - \mathcal{Y} \|^2 \right|\\
&\leq \left| \left \| \tilde{L} \left(\mathbbm{P}_{\mathcal{L}^\perp}(\mathcal{X}) \right) \right \|^2 - \| \mathbbm{P}_{\mathcal{L}^\perp}(\mathcal{X}) \|^2 \right| + \left| \left \| \tilde{L} \left( \mathbbm{P}_{\mathcal{L}} \left(\mathcal{X} \right) - \mathcal{Y} \right) \right\|^2 - \| \mathbbm{P}_{\mathcal{L}} \left(\mathcal{X} \right) - \mathcal{Y} \|^2  \right| \\
 &\hspace{2.2in}+2 \left| \left \langle \tilde{L} \left( \mathbbm{P}_{\mathcal{L}} \left(\mathcal{X} \right) - \mathcal{Y} \right),~ \tilde{L} \left( \mathbbm{P}_{\mathcal{L}^\perp}(\mathcal{X}) \right) \right \rangle \right|\\
&\leq~\frac{\epsilon}{6} \left( \| \mathbbm{P}_{\mathcal{L}^\perp}(\mathcal{X}) \|^2 + \| \mathbbm{P}_{\mathcal{L}} \left(\mathcal{X} \right) - \mathcal{Y} \|^2 + \| \mathcal{X} - \mathcal{Y} \|^2 \right)~=~\frac{\epsilon}{3} \| \mathcal{X} - \mathcal{Y} \|^2.
\end{align*}
Thus, $\tilde{L}$ has the desired JL-embedding property required to conclude the proof.
\end{proof}

Theorems~\ref{thm:ObliviousSubspaceEmbedded} and~\ref{thm:SubspaceEmbedResult} together with Lemma~\ref{lem:IndivTensorJL} can now be used to demonstrate the existence of a large range of modewise Johnson-Lindenstrauss Transforms (JLTs) for oblivious tensor subspace embeddings. The following modewise JLT result for tensors describes the compression one can achieve from Theorem~\ref{thm:SubspaceEmbedResult} if the linear operator $L$ one employs is formed using $j$-mode products (as considered in Proposition~\ref{thm:ObliviousSubspaceEmbedded}) with ${\bf A}_j \in \mathbbm{C}^{m_j \times n_j}$ taken from $\eta$-optimal families of JL embedding distributions (in the sense of Definition~\ref{def:EtaOpt}). 

We are now ready to complete the proof of Theorem~\ref{thm:GenmodewiseConstruction}.

\begin{proof}[Proof of Theorem~\ref{thm:GenmodewiseConstruction}]
To begin, we note that ${\bf A}$ will satisfy the conditions required by Theorem~\ref{thm:SubspaceEmbedResult} with probability at least $1-\eta / 2$ as a consequence of Lemma~\ref{lem:simplenetsubspace}.  Thus, if we can also establish that $\tilde{L}$ will satisfy the conditions required by Theorem~\ref{thm:SubspaceEmbedResult} with probability at least $1-\eta / 2$, we will be finished with our proof by Theorem~\ref{thm:SubspaceEmbedResult} and the union bound.  

To establish that $\tilde{L}$ satisfies the conditions required by Theorem~\ref{thm:SubspaceEmbedResult} with probability at least $1-\eta / 2$, it suffices to prove that 
\begin{enumerate}
\item[(a)] $\tilde{L}$ will be an $(\epsilon/6)$-JL embedding of all $\mathcal{Y} \in \mathcal{L}$ into $\mathbbm{C}^{m_1 \times \dots \times m_{d}}$ with probability at least $1-\eta / 4$, and that
\item[(b)] $\tilde{L}$ will be an $(\epsilon/24\sqrt{r})$-JL embedding of the $4r+1$ tensors $\mathcal{S}' \cup \left\{ \mathbbm{P}_{\mathcal{L}^\perp}(\mathcal{X}) \right\} \subset \mathbbm{C}^{n_1\times n_2\times ... \times n_d}$ into $\mathbbm{C}^{m_1 \times \dots \times m_{d}}$ with probability at least $1-\eta / 4$, where the set $\mathcal{S}'$ is defined as in Theorem~\ref{thm:SubspaceEmbedResult},
\end{enumerate}   
and apply yet another union bound.

To show that (a) holds, we will utilize Proposition~\ref{thm:ObliviousSubspaceEmbedded} and Lemma~\ref{lem:coefintermsofYnorm}.  Since each ${\bf A}_j$ matrix is an $(\eta/4d)$-optimal JL embedding and the sets $\mathcal{S}'_{j}$ (defined as in Proposition~\ref{thm:ObliviousSubspaceEmbedded}) are such that $| \mathcal{S}'_{j}| < n^d$, we know that each ${\bf A}_j$ is an $\left( \epsilon/480 d\sqrt{r} \right)$-JL embedding of $\mathcal{S}'_{j}$ into $\mathbbm{C}^{m_j}$ with probability\footnote{Here we also implicitly use the fact that $\sqrt[d]{d} \leq \sqrt[\mathbbm{e}]{\mathbbm{e}}$ holds for all $d > 0$ in order to avoid a $\sqrt[d]{d}$ term appearing inside the logarithm in \eqref{eq:rDepTwo}.} at least $1-\eta/4d$.
Thus, Proposition~\ref{thm:ObliviousSubspaceEmbedded} holds with $\epsilon \rightarrow \epsilon/ 120 \sqrt{r}$ with probability at least $1-\eta/4$
. Note that the modewise coherence assumption that $\mu_\mathcal{B}^{d-1} < 1/{2r}$ both allows $\epsilon^{d-1}$ to reduce the $\sqrt{r(r-1)}$ factor in \eqref{theor2result} to a size less than one for any $\epsilon \leq 1/\sqrt{r} \leq (1/r)^{1/(d-1)}$, and also allows Lemma~\ref{lem:coefintermsofYnorm} to guarantee that $\left\| {\boldsymbol \alpha} \right\|_2^2 < 2 \left\| \mathcal{Y} \right\|^2$ holds for all $\mathcal{Y} \in \mathcal{L}$.  Hence, applying Proposition~\ref{thm:ObliviousSubspaceEmbedded} with $\epsilon \rightarrow \epsilon/120 \sqrt{r}$ will ensure that $\tilde{L}$ is an $(\epsilon/6)$-JL embedding of all $\mathcal{Y} \in \mathcal{L}$ into $\mathbbm{C}^{m_1 \times \dots \times m_{d}}$.  

To show that (b) holds we will utilize Lemma~\ref{lem:IndivTensorJL}.  Note that the $\mathcal{S}_j$ sets defined in Lemma~\ref{lem:IndivTensorJL} all have cardinalities $\left| \mathcal{S}_j \right| \leq p \cdot n^{d-1}$, where $p = 4r+1 \leq n$ in our current setting.  As a consequence we can see that the conditions of Lemma~\ref{lem:IndivTensorJL} will be satisfied with $\epsilon \rightarrow \epsilon/24\sqrt{r}$ for all $j \in [d]$ with probability at least $1-\eta/4$ by the union bound.  Hence, both (a) and (b) hold and our proof is concluded.
\end{proof}

We will now consider a final tensor subspace embedding result concerning a special case of modewise JL embeddings that is also made possible by our work above.  This result will exhibit better dependence with respect to both $\epsilon$ and $r$ than what is achieved by the more general modewise embedding constructions in Theorem~\ref{thm:GenmodewiseConstruction}.

\subsection{Fast and Memory Efficient Modewise JL Embeddings for Tensors}
\label{sec:fastFembed}

In this section we consider a fast Johnson-Lindenstrauss transform for tensors recently introduced in \cite{jin2019faster}, which is effectively based on applying fast JL transforms \cite{krahmer2011new} in a modewise fashion.\footnote{In fact, the fast transform described here differs cosmetically from the form in which it is presented in \cite{jin2019faster}.  However, one can easily see they are equivalent using \eqref{equ:jmodeprod_vect}.}  Given a tensor $\mathcal{Z} \in \mathbbm{C}^{n_1 \times \dots \times n_d}$ the transform takes the form
\begin{equation}
L_{\rm FJL}\left(\mathcal{Z}\right)~:=~\sqrt{\frac{N}{m}}~{\bf R} \left(\mathrm{vect} \left( \mathcal{Z} \times_1 {\bf F}_1{\bf D}_1 \dots \times_d {\bf F}_d{\bf D}_d \right) \right)
\label{equ:FastJLWard}
\end{equation}
where $\mathrm{vect}:  \mathbbm{C}^{n_1 \times \dots \times n_d} \rightarrow \mathbbm{C}^N$ for $N := \prod_{\ell = 1}^d n_\ell$ is the vectorization operator, ${\bf R} \in \{ 0,1\}^{m \times N}$ is a matrix containing $m$ rows selected randomly from the $N \times N$ identity matrix, ${\bf F}_\ell \in \mathbbm{C}^{n_\ell \times n_\ell}$ is a unitary discrete Fourier transform matrix for all $\ell \in [d]$, and ${\bf D}_\ell \in \mathbbm{C}^{n_\ell \times n_\ell}$ is a diagonal matrix with $n_\ell$ random $\pm 1$ entries for all $\ell \in [d]$.  The following theorem is proven about this transform in \cite{jin2019faster,krahmer2011new}.

\begin{thm}[See Theorem~2.1 and Remark~4 in \cite{jin2019faster}]
Fix $d \geq 1$, $\epsilon, \eta \in (0,1)$, and $N \geq C' / \eta$ for a sufficiently large absolute constant $C' \in \mathbbm{R}^+$.  Consider a finite set $\mathcal{S} \subset \mathbbm{C}^{n_1 \times \dots \times n_d}$ of cardinality $p = \left| \mathcal{S} \right|$, and let $L_{\rm FJL}: \mathbbm{C}^{n_1 \times \dots \times n_d} \rightarrow \mathbbm{C}^m$ be defined as above in \eqref{equ:FastJLWard} with 
$$m ~\geq~ C \left[ \epsilon^{-2} \cdot \log^{2d-1} \left( \frac{\max(p,N)}{\eta} \right) \cdot \log^4 \left( \frac{\log \left(\frac{\max(p,N)}{\eta} \right)}{\epsilon} \right) \cdot \log N\right],$$
where $C > 0$ is an absolute constant.  Then with probability at least $1 - \eta$ the linear operator $L_{\rm FJL}$ is an $\epsilon$-JL embedding of $\mathcal{S}$ into $\mathbbm{C}^m$.  If $d = 1$ then we may replace $\max(p,N)$ with $p$ inside all of the logarithmic factors above (see \cite{krahmer2011new}).
\label{thm:RachelsJL4tensors}
\end{thm}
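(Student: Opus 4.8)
The plan is to derive Theorem~\ref{thm:RachelsJL4tensors} as a direct corollary of the modewise fast Johnson--Lindenstrauss results already available in \cite{jin2019faster} (with the sharpened logarithmic dependence coming from Remark~4 there), together with the Krahmer--Ward bound for the classical subsampled randomized Fourier transform in \cite{krahmer2011new}. Nothing new has to be proven from scratch; the only real work is to match the transform \eqref{equ:FastJLWard} to the form analyzed in those references and then to read off the bound on $m$.

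First I would make the cosmetic equivalence (flagged in the footnote) between \eqref{equ:FastJLWard} and the transform of \cite{jin2019faster} precise. Applying the vectorization identity \eqref{equ:jmodeprod_vect} to $\mathcal{Z} \times_1 {\bf F}_1 {\bf D}_1 \dots \times_d {\bf F}_d {\bf D}_d$ and then using the mixed-product property of the Kronecker product gives
\[
\mathrm{vect}\left( \mathcal{Z} \times_1 {\bf F}_1 {\bf D}_1 \dots \times_d {\bf F}_d {\bf D}_d \right) = \left( {\bf F}_d {\bf D}_d \otimes \dots \otimes {\bf F}_1 {\bf D}_1 \right) \mathrm{vect}(\mathcal{Z}) = \left( {\bf F}_d \otimes \dots \otimes {\bf F}_1 \right)\left( {\bf D}_d \otimes \dots \otimes {\bf D}_1 \right) \mathrm{vect}(\mathcal{Z}).
\]
Hence $L_{\rm FJL}(\mathcal{Z}) = {\bf R}\left( {\bf F}_d \otimes \dots \otimes {\bf F}_1 \right)\left( {\bf D}_d \otimes \dots \otimes {\bf D}_1 \right) \mathrm{vect}(\mathcal{Z})$, i.e.\ $L_{\rm FJL}$ is exactly a random row restriction of a Kronecker-structured unitary Fourier transform composed with a Kronecker-structured random $\pm 1$ diagonal acting on $\mathrm{vect}(\mathcal{Z})$ --- which is precisely the object whose JL behaviour is analyzed in \cite{jin2019faster}.

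Second, with the transforms identified, the bound on $m$ is read off directly: Theorem~2.1 of \cite{jin2019faster} shows $L_{\rm FJL}$ is an $\epsilon$-JL embedding of any fixed $p$-point set $\mathcal{S} \subset \mathbbm{C}^{n_1 \times \dots \times n_d}$ with probability at least $1-\eta$ once $m$ exceeds a constant times $\epsilon^{-2}\log^{2d-1}(\max(p,N)/\eta)\log N$, and Remark~4 of \cite{jin2019faster} upgrades this to the stated form carrying the additional $\log^4\!\big(\log(\max(p,N)/\eta)/\epsilon\big)$ factor that is the price for the optimal $\epsilon^{-2}$ scaling; the hypothesis $N \geq C'/\eta$ is exactly the one required there. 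Finally, for $d = 1$ the transform collapses to ${\bf R}{\bf F}_1{\bf D}_1$, a subsampled randomized Fourier transform, so the sharper bound of \cite{krahmer2011new} applies with $p$ in place of $\max(p,N)$, since there is no tensor structure forcing the extra ambient-dimension dependence.

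The main obstacle --- such as it is --- is bookkeeping rather than mathematics: one must verify that the notion of ``$\epsilon$-JL embedding'' and the randomness model (uniformly random rows for ${\bf R}$, i.i.d.\ Rademacher diagonals ${\bf D}_\ell$) used here coincide with those in \cite{jin2019faster,krahmer2011new}, and that the precise exponents on the nested logarithms are transcribed correctly. Because Theorem~\ref{thm:RachelsJL4tensors} is deliberately phrased as a near-verbatim restatement of those results, once the equivalence of the two transform representations is in hand the proof is essentially complete.
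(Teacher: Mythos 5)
Your proposal matches the paper's treatment exactly: the paper does not prove Theorem~\ref{thm:RachelsJL4tensors} but imports it verbatim from Theorem~2.1 and Remark~4 of \cite{jin2019faster} (and \cite{krahmer2011new} for $d=1$), noting only in a footnote that the transform \eqref{equ:FastJLWard} is cosmetically equivalent to the one analyzed there via \eqref{equ:jmodeprod_vect} --- which is precisely the identification you carry out, with the Kronecker mixed-product step made explicit. So your argument is correct and takes essentially the same route as the paper.
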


Note that the fast transform $L_{\rm FJL}$ requires only $\mathcal{O}\left(m \log N + \sum_\ell n_\ell\right)$ i.i.d. random bits and memory for storage.  Thus, it can be used to produce fast and low memory complexity oblivious subspace embeddings.  The next Theorem does so.

\begin{thm}
Fix $\epsilon, \eta \in (0,1/2)$ and $d \geq 2$.  Let $\mathcal{X} \in \mathbbm{C}^{n_1 \times \dots \times n_d}$, $N = \prod_{\ell = 1}^d n_\ell \geq 4C' / \eta$ for an absolute constant $C' > 0$, $\mathcal{L}$ be an $r$-dimensional subspace of $\mathbbm{C}^{n_1 \times \dots \times n_d}$ for $\max \left(2r^2 - r,4r \right) \leq N$, and $L_{\rm FJL}: \mathbbm{C}^{n_1 \times \dots \times n_d} \rightarrow \mathbbm{C}^{m_1}$ be defined as above in \eqref{equ:FastJLWard} with 
$$m_1 ~\geq~ C_1 \left[ C_2^d \left(\frac{r}{\epsilon}\right)^2 \cdot \log^{2d-1} \left( \frac{N}{\eta} \right) \cdot \log^4 \left( \frac{\log \left(\frac{N}{\eta} \right)}{\epsilon} \right) \cdot \log N\right],$$
where $C_1, C_2 > 0$ are absolute constants.  Furthermore, let ${\bf L'}_{\rm FJL} \in \mathbbm{C}^{m_2 \times m_1}$ be defined as above in \eqref{equ:FastJLWard} for $d = 1$ with 
$$m_2 ~\geq~ C_3 \left[ r \cdot \epsilon^{-2} \cdot \log \left( \frac{47}{\epsilon \sqrt[r]{\eta}} \right) \cdot \log^4 \left( \frac{r \log \left(\frac{47}{\epsilon \sqrt[r]{\eta}} \right)}{\epsilon} \right) \cdot \log m_1\right],$$
where $C_3 > 0$ is an absolute constant.  Then, with probability at least $1 - \eta$ it will be the case that
\begin{equation*}
\left| \left\| {\bf L'}_{\rm FJL}  \left(L_{\rm FJL}\left(  \mathcal{X} - \mathcal{Y} \right) \right) \right\|^2_2 - \left\|  \mathcal{X} - \mathcal{Y} \right\|^2 \right| \leq \epsilon \left\|  \mathcal{X} - \mathcal{Y} \right\|^2
\end{equation*}
holds for all $\mathcal{Y} \in \mathcal{L}$.

In addition, the $\left( {\bf L'}_{\rm FJL} , L_{\rm FJL} \right)$ transform pair requires only $\mathcal{O}\left(m_1 \log N + \sum_\ell n_\ell\right)$
random bits and memory for storage (assuming w.l.o.g. that $m_2 \leq m_1$), and $ {\bf L'}_{\rm FJL} \circ L_{\rm FJL}:  \mathbbm{C}^{n_1 \times \dots \times n_d} \rightarrow \mathbbm{C}^{m_2}$ can be applied to any tensor in just $\mathcal{O}\left(N \log N \right)$-time.
\label{thm:RachelNewSubspaceEmbed}
\end{thm}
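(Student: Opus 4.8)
The plan is to read Theorem~\ref{thm:RachelNewSubspaceEmbed} off of the composition result Theorem~\ref{thm:SubspaceEmbedResult}, taking the outer operator there to be $L := L_{\rm FJL}$ (so that the intermediate tensor target $\mathbbm{C}^{m_1 \times \dots \times m_{d'}}$ collapses to the plain vector space $\mathbbm{C}^{m_1}$, i.e.\ $d'=1$ and $\mathrm{vect}$ is the identity) and the inner matrix to be $\mathbf{A} := {\bf L'}_{\rm FJL} \in \mathbbm{C}^{m_2 \times m_1}$. Fix any orthonormal basis $\{\mathcal{T}_k\}_{k\in[r]}$ of $\mathcal{L}$. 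Once hypotheses (i), (ii) of Theorem~\ref{thm:SubspaceEmbedResult} are checked for $L_{\rm FJL}$ and its hypothesis on $\mathbf{A}$ is checked for ${\bf L'}_{\rm FJL}$, the stated inequality for all $\mathcal{Y}\in\mathcal{L}$ is immediate, and the specializations $\mathcal{X}=\mathbf{0}$ and ``$\mathcal{L}$ a column space'' are just restatements. So the whole task is to show the advertised $m_1,m_2$ make the three hypotheses hold simultaneously with probability $\ge 1-\eta$, together with the complexity accounting.

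\emph{First stage.} I would fold hypotheses (i) and (ii) into a single finite-set JL statement for $L_{\rm FJL}$. By Lemma~\ref{lem:Subspaceembed2.0} with $\epsilon\to\epsilon/6$, it suffices that $L_{\rm FJL}$ be an $(\epsilon/24r)$-JL embedding of the $2r^2-r$ sum/difference tensors of $\{\mathcal{T}_k\}$ in order to get (i) on all of $\mathcal{L}$; adjoining the single tensor $\mathbbm{P}_{\mathcal{L}^\perp}(\mathcal{X})$ and the $4r$ tensors of $\mathcal{S}'$ (which only need resolution $\epsilon/24\sqrt r$, already beaten by $\epsilon/24r$) leaves us needing $L_{\rm FJL}$ to be an $(\epsilon/24r)$-JL embedding of a fixed set $\mathcal{S}_{\rm FJL}$ with $|\mathcal{S}_{\rm FJL}|\le 2r^2+3r+1\le 2N$, using the standing assumptions $2r^2-r\le N$ and $4r\le N$. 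Feeding distortion $\epsilon/24r$, cardinality $p\le 2N$, and failure probability $\eta/2$ into Theorem~\ref{thm:RachelsJL4tensors} (the $d$-mode FJL), this holds once $m_1\gtrsim (r/\epsilon)^2\log^{2d-1}(N/\eta)\log^4(\log(N/\eta)/\epsilon)\log N$, i.e.\ exactly the stated bound after absorbing absolute constants into $C_1,C_2$. The key point is that replacing the natural $\epsilon$-net of the unit sphere of $\mathcal{L}$ (size $(C/\epsilon)^r$) by the $\mathcal{O}(r^2)$-size discretization of Lemma~\ref{lem:Subspaceembed2.0} keeps $\log\max(p,N)=\mathcal{O}(\log N)$, so the $\log^{2d-1}$ factor in the FJL bound stays $\log^{2d-1}(N/\eta)$ instead of being inflated to a power of $r$.

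\emph{Second stage.} It remains to make $\mathbf{A}={\bf L'}_{\rm FJL}$ an $(\epsilon/3)$-JL embedding of the at most $(r+1)$-dimensional subspace $\mathcal{L}' = \mathrm{span}\{L_{\rm FJL}(\mathbbm{P}_{\mathcal{L}^\perp}(\mathcal{X})), L_{\rm FJL}(\mathcal{T}_1),\dots,L_{\rm FJL}(\mathcal{T}_r)\}\subset\mathbbm{C}^{m_1}$. Here I would condition on $L_{\rm FJL}$: then $\mathcal{L}'$ is a fixed subspace and ${\bf L'}_{\rm FJL}$, drawn independently, is handled by the covering argument of Lemma~\ref{lem:simplenetsubspace} — with $\epsilon\to\epsilon/3$ it suffices that ${\bf L'}_{\rm FJL}$ be an $(\epsilon/6)$-JL embedding of a fixed $(\epsilon/48)$-net $\mathcal{C}'$ of the unit sphere of $\mathcal{L}'$ with $|\mathcal{C}'|\le(141/\epsilon)^{r+1}$. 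Since ${\bf L'}_{\rm FJL}$ is the $d=1$ FJL transform on $\mathbbm{C}^{m_1}$, Theorem~\ref{thm:RachelsJL4tensors} in its $d=1$ form (where $\max(p,N)$ is replaced by $p$) applies with distortion $\epsilon/6$, cardinality $p=|\mathcal{C}'|$, and failure probability $\eta/2$; because $\log(|\mathcal{C}'|/\eta)$ is of order $r\log(47/(\epsilon\sqrt[r]{\eta}))$, the requirement becomes precisely the stated lower bound on $m_2$, with $\log m_1$ playing the role of $\log N$ for this inner transform. A union bound over the two events — legitimate since ${\bf L'}_{\rm FJL}$ is independent of $L_{\rm FJL}$, so the net bound holds conditionally on $L_{\rm FJL}$ — gives probability $\ge 1-\eta$, and Theorem~\ref{thm:SubspaceEmbedResult} then finishes.

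\emph{Complexity and the main obstacle.} Adding the two transforms: by the remark following Theorem~\ref{thm:RachelsJL4tensors}, $L_{\rm FJL}$ needs $\mathcal{O}(m_1\log N+\sum_\ell n_\ell)$ random bits and storage (the $m_1$ random row indices of ${\bf R}$ and the $d$ sign patterns ${\bf D}_\ell$), while ${\bf L'}_{\rm FJL}$ needs $\mathcal{O}(m_2\log m_1+m_1)$; since $m_2\le m_1$ w.l.o.g.\ and $\log m_1=\mathcal{O}(\log N)$, the total is $\mathcal{O}(m_1\log N+\sum_\ell n_\ell)$. For runtime, $\mathcal{Z}\times_1{\bf F}_1{\bf D}_1\cdots\times_d{\bf F}_d{\bf D}_d$ costs $\mathcal{O}(\sum_\ell N\log n_\ell)=\mathcal{O}(N\log N)$ via modewise FFTs, applying ${\bf R}$ is $\mathcal{O}(m_1)$, and applying ${\bf L'}_{\rm FJL}$ on $\mathbbm{C}^{m_1}$ is $\mathcal{O}(m_1\log m_1)$, all $\mathcal{O}(N\log N)$ in the nontrivial regime $m_1\le N$. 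I expect the main obstacle to be not conceptual but this parameter bookkeeping: lining up the five distortion levels ($\epsilon/6$, $\epsilon/24\sqrt r$, $\epsilon/24r$, $\epsilon/3$, $\epsilon/6$) and the set cardinalities so that Theorem~\ref{thm:RachelsJL4tensors} returns exactly the advertised $m_1$ and $m_2$ — including the corner case where $m_1$ is so small that it must first be padded up to $\gtrsim C'/\eta$ before the $d=1$ FJL bound applies — while keeping straight that only the second stage, where $d=1$, can afford a net of size exponential in $r$.
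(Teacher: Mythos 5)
Your proposal is correct and follows essentially the same route as the paper: reduce to Theorem~\ref{thm:SubspaceEmbedResult} with $L=L_{\rm FJL}$ and ${\bf A}={\bf L'}_{\rm FJL}$, verify its hypotheses via Lemma~\ref{lem:Subspaceembed2.0} and Lemma~\ref{lem:simplenetsubspace}, supply the finite-set JL conditions from Theorem~\ref{thm:RachelsJL4tensors} (using its $d=1$ form for the inner transform), and finish with a union bound plus the same FFT-based complexity accounting. The only difference is bookkeeping: the paper splits the first stage into three separate events each at failure probability $\eta/4$, while you merge them into one set at the strictest distortion $\epsilon/24r$ — an equivalent, slightly tidier packaging.
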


\begin{proof}
Let $\{ \mathcal{T}_k \}_{k \in [r]}$ be an orthonormal basis for $\mathcal{L}$ (note that these basis tensors need not be low-rank), and $\mathbbm{P}_{\mathcal{L}^\perp}: \mathbbm{C}^{n_1 \times \dots \times n_d} \rightarrow \mathbbm{C}^{n_1 \times \dots \times n_d}$ be the orthogonal projection operator onto the orthogonal complement of $\mathcal{L}$.   Theorem~\ref{thm:SubspaceEmbedResult} combined with Lemmas~\ref{lem:Subspaceembed2.0}~and~\ref{lem:simplenetsubspace} imply that the result will be proven if all of the following hold:
\begin{enumerate}
\item[(i)] $L_{\rm FJL}$ is an $(\epsilon/24r)$-JL embedding of the $2r^2 - r$ tensors
$$\left( \bigcup_{1 \leq h < k \leq r} \left\{ {\mathcal T}_k - {\mathcal T}_h, {\mathcal T}_k + {\mathcal T}_h, {\mathcal T}_k - \mathbbm{i} {\mathcal T}_h,  {\mathcal T}_k + \mathbbm{i} {\mathcal T}_h \right\} \right) \bigcup \left\{ {\mathcal T}_k \right \}_{k \in [r]} \subset \mathcal{L}$$
into $\mathbbm{C}^{m_1}$,
\item[(ii)] $L_{\rm FJL}$ is an $(\epsilon/6)$-JL embedding of $\left\{ \mathbbm{P}_{\mathcal{L}^\perp}(\mathcal{X}) \right\}$ into $\mathbbm{C}^{m_1}$, 
\item[(iii)] $L_{\rm FJL}$ is an $(\epsilon/24\sqrt{r})$-JL embedding of the $4r$ tensors
$$\bigcup_{k \in [r]} \left\{  \frac{\mathbbm{P}_{\mathcal{L}^\perp}(\mathcal{X})}{ \left \| \mathbbm{P}_{\mathcal{L}^\perp}(\mathcal{X}) \right \|} - \mathcal{T}_k,  \frac{\mathbbm{P}_{\mathcal{L}^\perp}(\mathcal{X})}{ \left \| \mathbbm{P}_{\mathcal{L}^\perp}(\mathcal{X}) \right \|} + \mathcal{T}_k, \frac{\mathbbm{P}_{\mathcal{L}^\perp}(\mathcal{X})}{ \left \| \mathbbm{P}_{\mathcal{L}^\perp}(\mathcal{X}) \right \|} - \mathbbm{i}\mathcal{T}_k,  \frac{\mathbbm{P}_{\mathcal{L}^\perp}(\mathcal{X})}{ \left \| \mathbbm{P}_{\mathcal{L}^\perp}(\mathcal{X}) \right \|} + \mathbbm{i}\mathcal{T}_k \right\} \subset \mathbbm{C}^{n_1\times ... \times n_d}$$
into $\mathbbm{C}^{m_1}$, and
\item[(iv)] $ {\bf L'}_{\rm FJL}$ is an $(\epsilon/6)$-JL embedding of a minimal $(\epsilon/16)$-cover, $\mathcal{C}$, of the $r$-dimensional Euclidean unit sphere in the subspace $\mathcal{L}' \subset \mathbbm{C}^{m_1}$ from Theorem~\ref{thm:SubspaceEmbedResult} with $L = L_{\rm FJL}$ into $\mathbbm{C}^{m_2}$.  Here we note that $\left| \mathcal{C} \right| \leq \left(\frac{47}{\epsilon} \right)^{r}$.
\end{enumerate}
Furthermore, if $m_1$ and $m_2$ are chosen as above for sufficiently large absolute constants $C_1, ~C_2,$ and $C_3$, then Theorem~\ref{thm:RachelsJL4tensors} implies that each of $(i) - (iv)$ above will fail to hold with probability at most $\eta / 4$.  The desired result now follows from the union bound.

The number of random bits and storage complexity follows directly form Theorem~\ref{thm:RachelsJL4tensors} after noting that each row of ${\bf R}$ in \eqref{equ:FastJLWard} is determined by $\mathcal{O}\left(\log N\right)$ bits.  The fact that ${\bf L'}_{\rm FJL} \circ L_{\rm FJL}$ can be applied to any tensor $\mathcal{Z}$ in $\mathcal{O}\left(N \log N \right)$-time again follows from the form of \eqref{equ:FastJLWard}.  Note that each $j$-mode product with ${\bf F}_j{\bf D}_j$ involves $\prod_{\ell \neq j} n_\ell$ multiplications of ${\bf F}_j{\bf D}_j$ against all the mode-$j$ fibers of the given tensor $\mathcal{Z}$, each of which can be performed in $\mathcal{O}(n_j \log(n_j))$-time using fast Fourier transform techniques (or approximated even more quickly using sparse Fourier transform techniques if $n_j$ is itself very large -- see e.g. \cite{gilbert2014recent,merhi2019new,bittens2019deterministic,iwen2010combinatorial,iwen2013improved,segal2013improved}).  The required vectorization and applications of ${\bf R}$ can then be performed in just $\mathcal{O}(N)$-time thereafter.  Finally, Fourier transform techniques can again be used to also apply ${\bf L'}_{\rm FJL}$ in $\mathcal{O}(m_1 \log m_1)$-time.
\end{proof}

\begin{remark}
To recap, in Sections~\ref{sec:GenModewiseLSsec}~and~\ref{sec:fastFembed} we presented two different results concerning modewise oblivious JL emdeddings for low-rank tensors subspaces, Theorem~\ref{thm:RachelNewSubspaceEmbed} and Theorem~\ref{thm:GenmodewiseConstruction}. Unlike Theorem~\ref{cor:MainOblEmb}, they are both suited for tensor low-rank fitting applications since they allow for an affine shift of an arbitrary low-rank tensor subspace $\mathcal{L}$ by an arbitrary (and not necessarily low-rank) fixed tensor $\mathcal{X}$. 

Fix $d, n, N$ and $\eta$.  Recalling Remark~\ref{Rem:Repsdepthm4} we can see that the intermediate embedding dimension provided by Theorem~\ref{thm:GenmodewiseConstruction} is $\prod_{\ell = 1}^d m_\ell \leq  C_{d,\eta,n}^d r^d \eps^{-2d}$.  In comparison we can see that Theorem~\ref{thm:RachelNewSubspaceEmbed} achieves an intermediate embedding dimension of size 
$$m_1 \leq C_{d,\eta,N}^d \left(\frac{r}{\epsilon}\right)^2 \cdot \log^4 \left( \frac{C_{d,\eta,N}}{\epsilon} \right).$$
Hence, Theorem~\ref{thm:RachelNewSubspaceEmbed} provides a significantly better intermediate embedding dimension for large $d$ (with respect to $r$ and $\epsilon$ dependence) than Theorem~\ref{thm:GenmodewiseConstruction} does despite the fact that both theorems ultimately achieve a near-optimal final embedding dimension.  Ultimately, this means that Theorem~\ref{thm:RachelNewSubspaceEmbed} provides more compactly storable multistage JL embeddings when $d$ is large than Theorem~\ref{thm:GenmodewiseConstruction} does.  Additionally, Theorem~\ref{thm:RachelNewSubspaceEmbed} does not require the basis tensors of any low-rank subspace to which it is applied to all be rank-one tensors, an advantage which is not employed in the framework of tensor low-rank fitting problems, but which might be useful in other settings.

On the other hand, Theorem~\ref{thm:GenmodewiseConstruction} is significantly more general for tensor subspaces with rank-one bases that have low modewise coherence:  it guarantees JL embedding properties for modewise products by any matrices from a large class of almost optimal JL embedding matrices including, e.g., sparse JL embedding matrices.  In contrast, Theorem~\ref{thm:RachelNewSubspaceEmbed} relies on a very particular modewise operation based on Discrete Fourier Transform (DFT) matrices.  \end{remark}

We are now prepared to consider the numerical performance of such modewise JL transforms.

\section{Experiments} \label{sec:exp}

In this section it is shown that the norms of several different types of (approximately) low-rank data can be preserved using JL embeddings, and trial least squares experiments with compressed tensor data are also performed to show the effect of these embeddings on solutions to least squares problems. All experiments were carried out in MATLAB.
The data sets used in the experiments consist of
\begin{enumerate}
\item {\it MRI data}: This data set contains three $3$-mode MRI images of size $240 \times 240 \times 155$ \cite{ADNI}.
\item {\it Randomly generated data}: This data set contains $10$ rank-$10$ $4$-mode tensors. Each test tensor is a $100\times 100 \times 100\times 100$ tensor that is created by adding $10$ randomly generated rank-$1$ tensors. More specifically, each rank-$10$ tensor is generated according to $$\mathcal{X}^{(m)}=\sum\limits_{k=1}^{r}\bigcirc_{j=1}^{d}\mathbf{y}_{k}^{(j)},$$
where $m \in [10]$, $r=10$, $d=4$ and $\mathbf{y}_{k}^{(j)} \in \mathbb{R}^{100}$. In the Gaussian case, each entry of $\mathbf{y}_{k}^{(j)}$ is drawn independently from the standard Gaussian distribution $\mathcal{N}\left( 0,1 \right)$. In the case of coherent data, low-variance Gaussian noise is added to a constant, i.e., each entry $\mathbf{y}_{k,\ell}^{(j)}$ of $\mathbf{y}_{k}^{(j)}$ is set as $1 + \sigma g_{k,\ell}^{(j)}$ with $g_{k,\ell}^{(j)}$ being an i.i.d. standard Gaussian random variable defined above, and $\sigma^2$ denoting the desired variance. In the experiments of this section, $\sigma=\sqrt{0.1}$ is used. In both cases, the $2$-norm of $\mathbf{y}_{k}^{(j)}$ is also normalized to $1$.

The reason for running experiments on both Gaussian and coherent data is to show that although coherence requirements presented in section \ref{sec:main} are used to help get general theoretical results for a large class of modewise JL embeddings, they do not seem to be necessary in practice.
\end{enumerate}

When JL embeddings are applied, experiments are performed using Gaussian JL matrices as well as Fast JL matrices. For Gaussian JL, $\mathbf{A}_j=\frac{1}{\sqrt{m}}\mathbf{G}$ is used for all $j \in [d]$, where $m$ is the target dimension and each entry in $\mathbf{G}$ is an i.i.d. standard Gaussian random variable $\mathbf{G}_{i,j}\sim \mathcal{N}\left( 0,1 \right)$. For Fast JL,  $\mathbf{A}_j=\frac{1}{\sqrt{m}}\mathbf{R}\mathbf{F}\mathbf{D}$ is used for all $j \in [d]$, where $\mathbf{R}$ denotes the random restriction matrix, $\mathbf{F}$ is the unitary DFT matrix scaled by $\sqrt{n_j}$,\footnote{Recall that $n_j$ is the size of the mode-$j$ fibers of the input tensor.} and $\mathbf{D}$ is a diagonal matrix with Rademacher random variables forming its diagonal \cite{krahmer2011new}. The embedded version of a test tensor $\mathcal{X}$ is always denoted by $L\left( \mathcal{X} \right)$, and is calculated by

\begin{equation}
L\left( \mathcal{X} \right) =
\left\{
\begin{array}{ll}
\mathcal{X}\times_1 \mathbf{A}_1\times \dots \times_d \mathbf{A}_d,  & \mbox{$1$-stage JL} \\\\
\mathbf{A}\left(\text{vect}\left(\mathcal{X}\times_1 \mathbf{A}_1\times \dots \times_d \mathbf{A}_d\right)\right), & \mbox{$2$-stage JL}
\end{array}
\right.
\label{equ:Exp_L(X)}
\end{equation}

\noindent where $\mathbf{A}$ is a JL matrix used in the $2^{\rm nd}$ stage. Obviously, $L\left( \mathcal{X} \right)$ is a vector in the $2$-stage case.

\subsection{Effect of JL Embeddings on Norm} \label{subsec:JL_exp}
In this section, numerical results have been presented, showing the effect of mode-wise JL embedding on the norm of $3$ MRI $3$-mode images treated as generic tensors, as well as randomly generated data.

The compression ratio for the $j^{\rm th}$ mode, denoted by $c_1^{(j)}$, is defined as the compression in the size of each of the mode-$j$ fibers, i.e.,
\begin{equation}\nonumber
c_1^{(j)}=\frac{m_j}{n_j}.
\label{equ:c_1j}
\end{equation}

The target dimension $m_j$ in JL matrices is chosen as $m_j=\left \lceil c_1 n_j\right \rceil$ for all $j \in [d]$, to ensure that {\it at least} a fraction $c_1$ of the ambient dimension in each mode is preserved. In the experiments, the compression ratio is set to be the same for all modes, i.e., $c_1^{(j)}=c_1$ for all $j \in [d]$. In the case of a $2$-stage JL embedding, the target dimension $m$ of the secondary JL embedding is chosen as
$$m=\left \lceil c_2 N\right \rceil,$$
where $c_2$ is the compression ratio in the $2^{\rm nd}$ stage, and $N$ is the length of the vectorized projected tensor after the modewise JL embedding. The total achieved compression is calculated by $c_{tot}=c_2\left(\prod_{j=1}^d c_1^{(j)}\right)$. When the $2^{\rm nd}$ stage embedding is skipped, $c_{tot}=\prod_{j=1}^d c_1^{(j)}$. In all experiments of \S \ref{sec:exp}, when a $2$-stage embedding is performed, $c_2=0.05$. Also, in figure legends, when two JL types are listed together, the first and second terms refer to the first and second stages, respectively. For example, in `Gaussian$+$RFD', Gaussian and RFD JL embeddings were used in the first and second stages, respectively. The term `vec' in the legends refers to vectorizing the data.

Assuming $\mathcal{X}$ denotes the original tensor and $L\left( \mathcal{X} \right)$ is the projected result, the relative norm of $\mathcal{X}$ is defined by
\begin{equation}\nonumber
c_{n,\mathcal{X}}=\frac{\| L\left( \mathcal{X} \right) \|}{\| \mathcal{X} \|}.
\label{equ:rel_norm}
\end{equation}
The results of this section depict the interplay between $c_{n,\mathcal{X}}$ and $c_1$ for randomly generated data, and $c_{n,\mathcal{X}}$ versus $c_{tot}$ for MRI data, where the numbers have been averaged over $1000$ trials, as well as over all samples for each value of $c_1$ or $c_{tot}$. In the case of Figure \ref{fig:JL_norm_synth}, $1000$ randomly generated JL matrices were applied to each mode of all $10$ randomly generated tensors. The results there indicate that the modewise embedding methods proposed herein still work on relatively coherent data despite the incoherence assumptions utilized in their theoretical analysis (recall Section~\ref{sec:main}).  In Figure \ref{fig:JL_norm_real}, $1000$ JL embedding choices have been averaged over each of the $3$ MRI images as well as the $3$ images themselves. As expected, it can be observed in both figures that increasing the compression ratio leads to better norm (and distance) preservation.

The MRI data experiments were done using various combinations of JL matrices in the first and second stages, and were compared with the $1$-stage (modewise) case and also JL applied to vectorized data. In Figure \ref{fig:JL_norm_real}\subref{fig:JL_norm_real_b}, the runtime plots show that vectorizing the data before applying JL embeddings is the most computationally intensive way of compressing the data, although it preserves norms the best, as Figure \ref{fig:JL_norm_real}\subref{fig:JL_norm_real_a} demonstrates. Due to the small mode sizes of the MRI data used in the experiments, modewise fast JL does not outperform modewise Gaussian JL in terms of computational efficiency in the modewise embeddings as one might initially expect (see the red and blue curves).  This is likely due to the fact that the individual mode sizes are too small to benefit from the FFT (recall all modes are $\leq 240$ in size), together with the need of Fourier methods to use less efficient complex number arithmetic. However, when the $2$-stage JL is employed for larger compression ratios, the vectorized data after the first stage compression is large enough to make the efficiency of fast JL over Gaussian JL embeddings clear (compare, e.g., the yellow and purple curves). 

\begin{figure}
	\centering
	\begin{subfigure}{0.45\textwidth}
		\centering
		\includegraphics[width=1\linewidth]{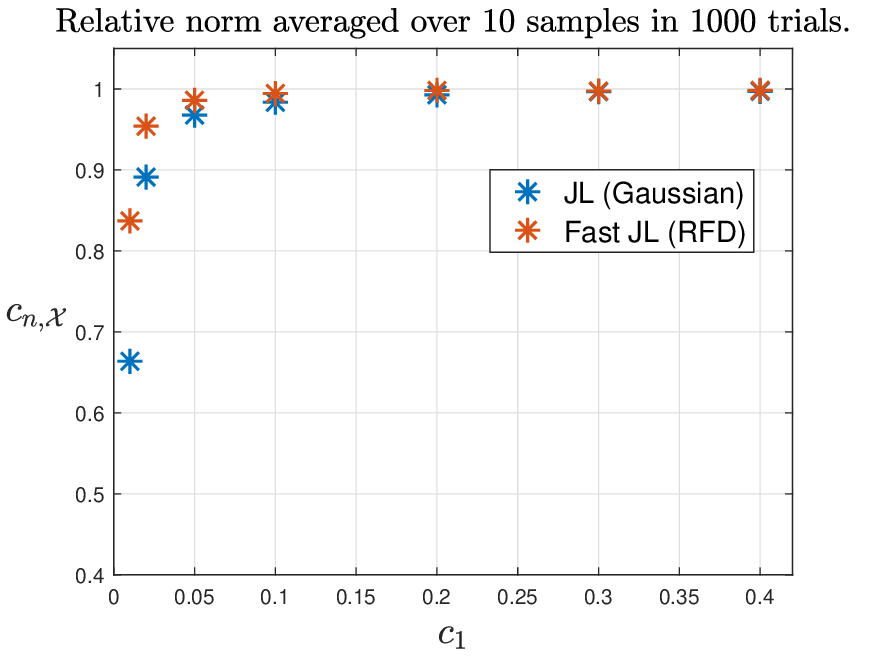}
		\caption{}
	\end{subfigure}
	\begin{subfigure}{0.45\textwidth}
		\centering
		\includegraphics[width=1\linewidth]{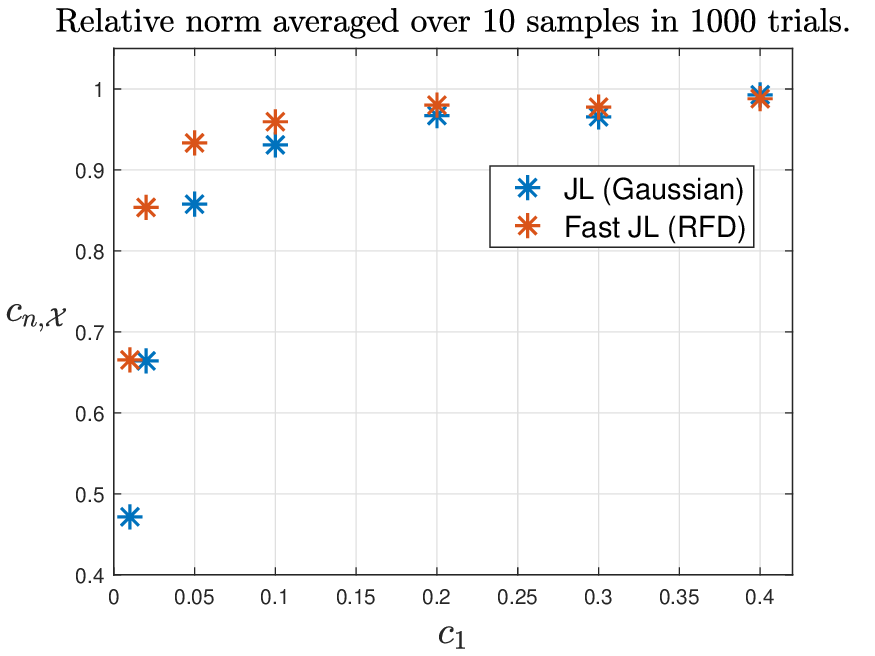}
		\caption{}
	\end{subfigure}
	
	\caption{Relative norm of randomly generated  $4$-dimensional data. Here, the total compression will be $c_{tot}=c_1^4$. (a) Gaussian data. (b) Coherent data.  Note that the modewise approach still preserves norms well for the coherent data indicating that the incoherence assumptions utilized in \S \ref{sec:main} can likely be relaxed.}
	\label{fig:JL_norm_synth}
\end{figure}

\begin{figure}
	\centering
	\begin{subfigure}{0.48\textwidth}
		\centering
		\includegraphics[width=1\linewidth]{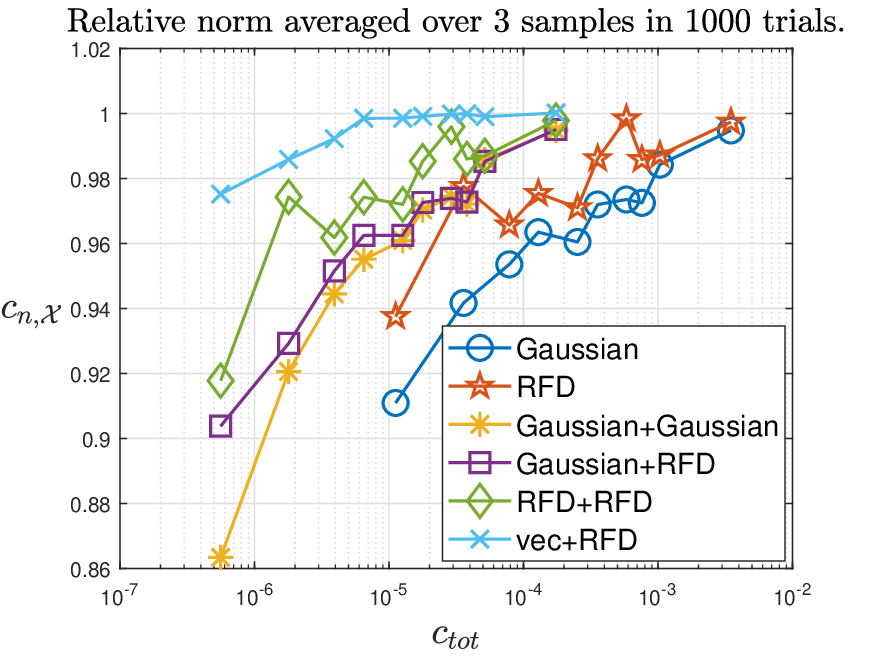}
		\caption{}
		\label{fig:JL_norm_real_a}
	\end{subfigure}
	\begin{subfigure}{0.48\textwidth}
		\centering
		\includegraphics[width=1\linewidth]{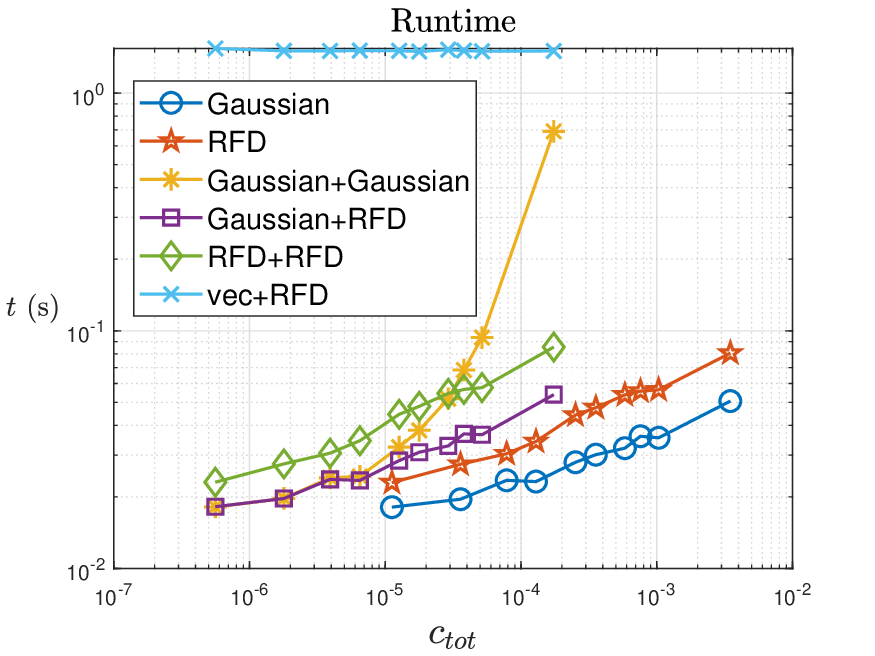}
		\caption{}
		\label{fig:JL_norm_real_b}
	\end{subfigure}
	
	\caption{Simulation results averaged over $1000$ trials for $3$ MRI data samples, where each sample is $3$-dimensional. In the $2$-stage cases, $c_2=0.05$ has been used. (a) Relative norm. (b) Runtime.}
	\label{fig:JL_norm_real}
\end{figure}

\subsection{Effect of JL Embeddings on Least Squares Solutions} \label{subsec:JL_LS}
In this section, the first sample of the three MRI data samples is used in the experiments. First, it is shown that this MRI sample has a relatively low-rank CP representations by plotting its CP reconstruction error for various values of rank. Next, the effect of modewise JL on least squares solutions is investigated by solving for the coefficients of the CP decomposition of the MRI sample in a least squares problem. This will be done by performing $1$-stage (modewise) and $2$-stage JL on the data, which we call compressed least squares, and will be compared with the case where a regular uncompressed least squares problem is solved instead.

\subsubsection{CPD Reconstruction}\label{CPD reconst} \label{subsec:cpd_reconst_exp}
Before the experimental results, a short description of the basic form of CPD calculation is presented as well as how the number of rank-$1$ tensors, $r$, is chosen. Given a tensor $\mathcal{X}$, assume $r$ is known beforehand. The problem is now the calculation of $\mathbf{y}_k^{(j)}$ for $j \in [d]$ and $k \in [r]$ and $\boldsymbol{\alpha}$ in \eqref{equ:rankrTensor}, i.e. the solution to
\begin{equation}
\min\limits_{\hat{\mathcal{X}}}\| \mathcal{X} - \hat{\mathcal{X}} \| \text{ with $\hat{\mathcal{X}} = \sum\limits_{k=1}^{r}\alpha_k~ \mathbf{y}^{(1)}_{k} \bigcirc \mathbf{y}^{(2)}_{k} \bigcirc \dots \bigcirc \mathbf{y}^{(d)}_{k}$}.
\label{equ:CP_optim}
\end{equation}

As the Euclidean norm a $d$-mode tensor is equal to the Frobenius norm of its mode-$j$ unfoldings for $j \in [d]$, by letting $\mathbf{y}^{(j)}_{k}$ be the $k^{\rm th}$ column of a matrix $\mathbf{Y}^{(j)} \in \mathbbm{C}^{n_j \times r}$, the above minimization problem can be written as
\begin{equation}\nonumber
\begin{split}
\min\limits_{\hat{\mathbf{Y}}^{(j)}}\left\| \mathbf{X}_{(j)}-\hat{\mathbf{Y}}^{(j)}\left( \mathbf{Y}^{(d)}\odot \dots \odot \mathbf{Y}^{(j+1)}\odot \mathbf{Y}^{(j-1)}\odot \dots \odot \mathbf{Y}^{(1)}  \right)^\top \right\|_{\rm F}
\end{split}
\label{}
\end{equation} 
\noindent where $\hat{\mathbf{Y}}^{(j)}=\mathbf{Y}^{(j)}\text{diag}\left( \boldsymbol{\alpha} \right)$, and $\odot$ denotes the Khatri-Rao product defined as the columnwise matching Kronecker product. The operator diag$\left( \cdot \right)$ creates a diagonal matrix with $\boldsymbol{\alpha}$ as its diagonal.
Once solved for, the columns of $\hat{\mathbf{Y}}^{(j)}$ can then be normalized and used to form the coefficients $\alpha_k=\prod_{j=1}^{d}\| \hat{{\bf y}}^{(j)}_k \|_2$ for $k \in [r]$, although this is optional, i.e., if the columns are not normalized, the coefficients $\alpha_k$ in the factorization will all be ones. This procedure is repeated iteratively until the fit ceases to improve (the objective function stops improving with respect to a tolerance) or the maximum number of iterations are exhausted. This procedure is known as CPD-ALS\footnote{Alternating Least Squares} \cite{kolda2009tensor}.
To choose the rank of the decomposition as well as obtaining the best estimates for $\mathbf{Y}^{(j)}$, a commonly used consistency diagnostic called CORCONDIA\footnote{CORe CONsistency DIAgnostic} can be employed \cite{bro2003new}.

In the remainder of this section, the relative reconstruction error of CPD is calculated and plotted for various values of rank $r$. Assuming $\mathcal{X}$ represents the data, this error is defined as
\begin{equation}\nonumber
e_{cpd}=\frac{\| \mathcal{X}-\hat{\mathcal{X}} \|}{\| \mathcal{X} \|},
\label{equ:rel_error_rec}
\end{equation}
\noindent where $\hat{\mathcal{X}}$ denotes the reconstruction of $\mathcal{X}$. Figure \ref{fig:cpd_rec} displays the results.

\begin{figure}
 \centering
\includegraphics[width=0.45\linewidth]{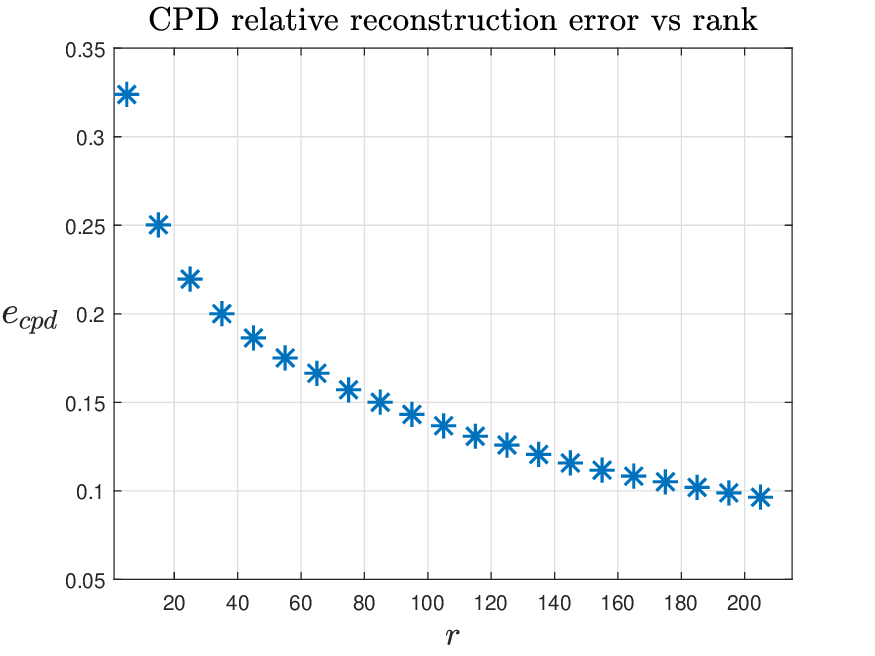}
\caption{Relative reconstruction error of CPD calculated for different values of rank $r$ for MRI data. As the rank increases, the error becomes smaller.}
 \label{fig:cpd_rec}
\end{figure}

\subsubsection{Compressed Least Squares Performance}Let ${\bf y}^{(j)}_k$ be known in $$\mathcal{X} \approx \sum_{k=1}^r \alpha_k \bigcirc^d_{j = 1} {\bf y}^{(j)}_k,$$ for $k \in [r]$ and $j \in [d]$. They can be obtained from a previous iteration in the CPD fitting procedure. Here, they come from the CPD of the data calculated in section \ref{CPD reconst}. Also, assume these vectors have unit norms. In general, as stated in section \ref{CPD reconst}, when ${\bf y}^{(j)}_k$ are obtained using a CPD algorithm, they do not necessarily have unit norms. Therefore, they are normalized and the norms are absorbed into the coefficients of CPD. In other words, $\alpha_k=\prod_{j=1}^{d}\| {\bf y}^{(j)}_k \|_2$ for $k \in [r]$. If the normalization of the vectors is not performed, $\alpha_k=1$ for $k \in [r]$. The coefficients of the CPD fit are the solutions to the following least squares problem,
\begin{equation}\nonumber
\boldsymbol{\alpha}=\argmin_{\boldsymbol{\beta}} \left\| \mathcal{X} - \sum_{k=1}^r \beta_k \bigcirc^d_{j = 1} {\bf y}^{(j)}_k \right \|.
\label{equ:LS_estim}
\end{equation}
As normalization of ${\bf y}^{(j)}_k$ was not performed when computing the CPD of the data in these experiments, the true solution will be $\boldsymbol{\alpha}={\bf 1}$. An approximate solution for the coefficients can be obtained by solving for
\begin{equation}\nonumber
\boldsymbol{\alpha}_P=\argmin_{\boldsymbol{\beta}} \left\| L\left(\mathcal{X}\right) - L\left(\sum_{k=1}^r \beta_k \bigcirc^d_{j = 1} {\bf y}^{(j)}_k \right) \right \|,
\label{equ:LS_estim_proj}
\end{equation}

\noindent where $\boldsymbol{\alpha}_P$ is the vector $\boldsymbol{\alpha}$ estimated for randomly projected data, and $L\left(\mathcal{X}\right)$ is defined as per \eqref{equ:Exp_L(X)}. This is in fact simply another way of demonstrating that solving \eqref{equ:compDecoupledALS} yields an approximate solution to \eqref{equ:DecoupledALS} for a ($d-1$)-mode tensor. Of course, both of these problems can be solved using the vectorized versions of the tensors instead. Indeed, for $\boldsymbol{\alpha}_P$, vectorization should be done after random projection of $\mathcal{X}$ and the rank-$1$ tensors, i.e.,
\begin{equation}\nonumber
\boldsymbol{\alpha}_P=\argmin_{\boldsymbol{\beta}} \left\| \mathbf{x}_P - \mathbf{B}\boldsymbol{\beta} \right \|_2=\left( \mathbf{B}^\ast \mathbf{B} \right)^{-1}\mathbf{B}^\ast \mathbf{x}_P,
\label{equ:LS_estim_proj_vect}
\end{equation}
\noindent where $\mathbf{x}_P=\text{vect}\left(L\left(\mathcal{X}\right)\right)$, and $\mathbf{B}$ is a matrix whose $k^{\rm th}$ column is $\text{vect} \left( L \left( \bigcirc^d_{j = 1}{\bf y}^{(j)}_k \right) \right)$\footnote{Again, it is clear that in the $2$-stage case, $L\left( \mathcal{X} \right)$ and $L \left( \bigcirc^d_{j = 1}{\bf y}^{(j)}_k \right)$ are vectors, and therefore, the operator $\text{vect}\left( \cdot \right)$ does not change the result.} for $k \in [r].$\footnote{The backslash operator was used to actually solve the resulting least squares problems in MATLAB.}
The error measure used to evaluate the approximate solution is defined as $$e_r=\left| \frac{e_P - e_T}{e_T} \right|,$$
where $e_T=\left\| \mathcal{X} - \sum_{k=1}^r \alpha_k \bigcirc^d_{j = 1} {\bf y}^{(j)}_k \right \|$ and $e_P=\left\| \mathcal{X} - \sum_{k=1}^r \alpha_{P,k} \bigcirc^d_{j = 1} {\bf y}^{(j)}_k \right \|$. This in fact compares the true CPD reconstruction error and the reconstruction error calculated using the approximate solution for the CPD coefficients $\boldsymbol{\alpha}_P$. The results are shown in Figure \ref{fig:LS_alpha}.

\begin{figure}
	\centering
	\begin{subfigure}{0.48\textwidth}
		\centering
		\includegraphics[width=1\linewidth]{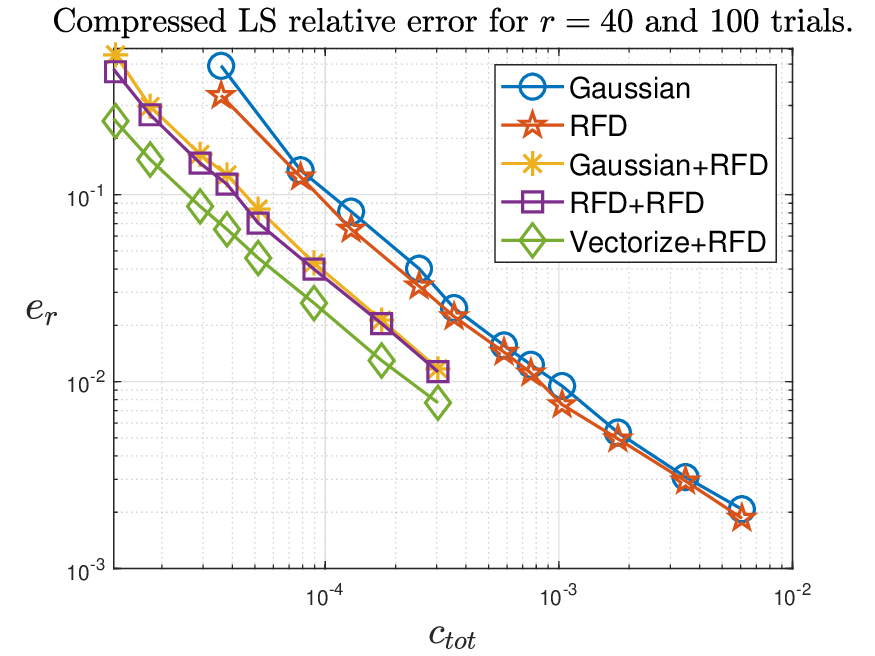}
		\caption{}
	\end{subfigure}
	\begin{subfigure}{0.48\textwidth}
		\centering
		\includegraphics[width=1\linewidth]{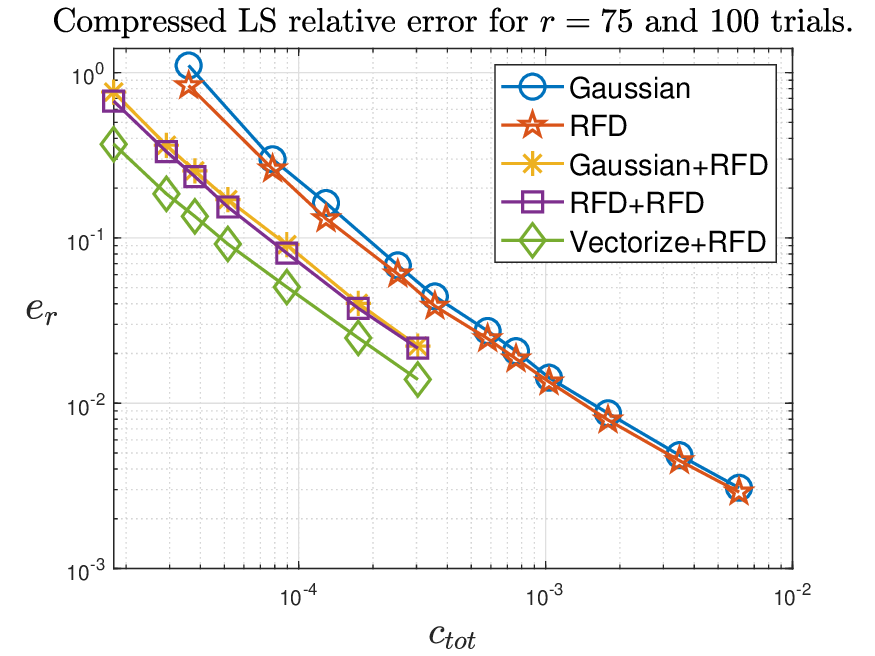}
		\caption{}
	\end{subfigure}
	\begin{subfigure}{0.48\textwidth}
		\centering
		\includegraphics[width=1\linewidth]{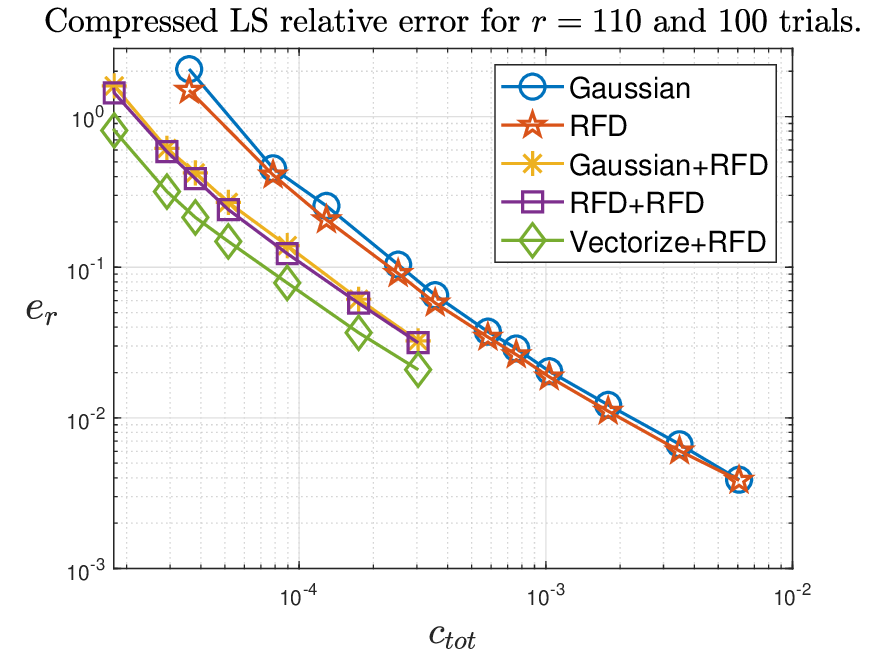}
		\caption{}
	\end{subfigure}
	\begin{subfigure}{0.48\textwidth}
		\centering
		\includegraphics[width=1\linewidth]{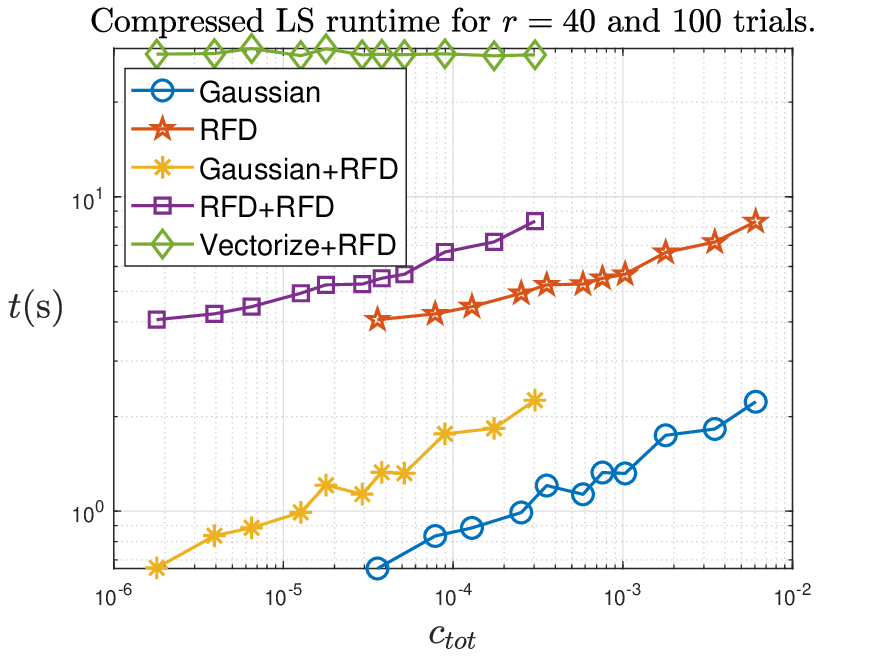}
		\caption{}
	\end{subfigure}
	
	\caption{Effect of JL embeddings on the relative reconstruction error of least squares estimation of CPD coefficients. In the $2$-stage cases, $c_2=0.05$ has been used. (a) $r=40$. (b) $r=75$. (c) $r=110$. (d) Average runtime for $r=40$.  The other runtime plots for $r = 75$ and $r = 110$ are qualitatively identical.}
	\label{fig:LS_alpha}
\end{figure}

\section{Conclusion}\label{sec:conc}
We have proposed general modewise Johnson-Lindenstrauss (JL) subspace embeddings that are faster to generate and significantly smaller to store than traditional JL embeddings especially for tensors in very large dimensions.  We provided a subspace embedding result with improved space complexity bounds for embeddings of rank-$r$ tensors in the setting of unknown basis tensors. This result also has applications in the vector setting, leading to general near-optimal oblivious subspace embedding constructions that require fewer random bits for subspaces spanned by basis vectors having special Kronecker structure.  
We also provided new fast JL embeddings for arbitrary $r$-dimensional subspaces using fewer random bits than standard  methods. We showcased these results for applications including compressed least squares and fitting low-rank CP decompositions, while also confirming our results experimentally.
There are several interesting future directions including the analysis of other randomly constructed embeddings, the construction of embeddings designed to maintain other types of structures (such as properties of the core tensor), and their effectiveness in reconstruction and inference tasks.

\section*{Acknowledgments} M. Iwen was supported in part\footnote{ 
Mark would also like to thank E.I. and D. M. for greatly accentuating his UCLA visit by squatting at his Airbnb Oct. 15 -- 19, 2019, as well as a to commit a written act of dogeza to his near-optimal wife for agreeing to his being over 2000 miles away during E's witching months. Mark also sends many thanks to E. S. for helping out with the baby in his place during his absence.} by NSF DMS 1912706 and NSF CCF 1615489, Deanna Needell and Elizaveta Rebrova by NSF CAREER DMS 1348721
and NSF BIGDATA 1740325, and Ali Zare by NSF CCF 1615489. Elizaveta Rebrova also acknowledges sponsorship by Capital Fund Management.

\bibliographystyle{abbrv}
\bibliography{references2}

\begin{thebibliography}{10}

\bibitem{ADNI}
Alzheimer's disease neuroimaging initiative.
\newblock \url{http://adni.loni.usc.edu/}.

\bibitem{achlioptas2003database}
D.~Achlioptas.
\newblock Database-friendly random projections: {J}ohnson-{L}indenstrauss with
  binary coins.
\newblock {\em Journal of computer and System Sciences}, 66(4):671--687, 2003.

\bibitem{ahle2020oblivious}
T.~D. Ahle, M.~Kapralov, J.~B. Knudsen, R.~Pagh, A.~Velingker, D.~P. Woodruff,
  and A.~Zandieh.
\newblock Oblivious sketching of high-degree polynomial kernels.
\newblock In {\em Proceedings of the Fourteenth Annual ACM-SIAM Symposium on
  Discrete Algorithms}, pages 141--160. SIAM, 2020.

\bibitem{ahmed2014compressive}
A.~Ahmed and J.~Romberg.
\newblock Compressive multiplexing of correlated signals.
\newblock {\em IEEE Transactions on Information Theory}, 61(1):479--498, 2014.

\bibitem{anandkumar2014tensor}
A.~Anandkumar, R.~Ge, D.~Hsu, S.~M. Kakade, and M.~Telgarsky.
\newblock Tensor decompositions for learning latent variable models.
\newblock {\em The Journal of Machine Learning Research}, 15(1):2773--2832,
  2014.

\bibitem{avron2014subspace}
H.~Avron, H.~Nguyen, and D.~Woodruff.
\newblock Subspace embeddings for the polynomial kernel.
\newblock In {\em Advances in neural information processing systems}, pages
  2258--2266, 2014.

\bibitem{baraniuk2008simple}
R.~Baraniuk, M.~Davenport, R.~DeVore, and M.~Wakin.
\newblock A simple proof of the restricted isometry property for random
  matrices.
\newblock {\em Constructive Approximation}, 28(3):253--263, 2008.

\bibitem{basri2003lambertian}
R.~Basri and D.~W. Jacobs.
\newblock Lambertian reflectance and linear subspaces.
\newblock {\em IEEE Transactions on Pattern Analysis \& Machine Intelligence},
  (2):218--233, 2003.

\bibitem{battaglino2018practical}
C.~Battaglino, G.~Ballard, and T.~G. Kolda.
\newblock A practical randomized {CP} tensor decomposition.
\newblock {\em SIAM Journal on Matrix Analysis and Applications},
  39(2):876--901, 2018.

\bibitem{beck2000multiconfiguration}
M.~H. Beck, A.~J{\"a}ckle, G.~A. Worth, and H.-D. Meyer.
\newblock The multiconfiguration time-dependent {H}artree ({MCTDH}) method: a
  highly efficient algorithm for propagating wavepackets.
\newblock {\em Physics reports}, 324(1):1--105, 2000.

\bibitem{bengua2017efficient}
J.~A. Bengua, H.~N. Phien, H.~D. Tuan, and M.~N. Do.
\newblock Efficient tensor completion for color image and video recovery:
  Low-rank tensor train.
\newblock {\em IEEE Transactions on Image Processing}, 26(5):2466--2479, 2017.

\bibitem{bittens2019deterministic}
S.~Bittens, R.~Zhang, and M.~A. Iwen.
\newblock A deterministic sparse fft for functions with structured fourier
  sparsity.
\newblock {\em Advances in Computational Mathematics}, 45(2):519--561, 2019.

\bibitem{bro2003new}
R.~Bro and H.~A. Kiers.
\newblock A new efficient method for determining the number of components in
  parafac models.
\newblock {\em Journal of Chemometrics: A Journal of the Chemometrics Society},
  17(5):274--286, 2003.

\bibitem{candes2011robust}
E.~J. Cand{\`e}s, X.~Li, Y.~Ma, and J.~Wright.
\newblock Robust principal component analysis?
\newblock {\em Journal of the ACM (JACM)}, 58(3):11, 2011.

\bibitem{candes2009exact}
E.~J. Cand{\`e}s and B.~Recht.
\newblock Exact matrix completion via convex optimization.
\newblock {\em Foundations of Computational mathematics}, 9(6):717--772, 2009.

\bibitem{RefWorks:48}
E.~J. Cand\`es and T.~Tao.
\newblock Decoding by linear programming.
\newblock {\em IEEE T. Inform. Theory}, 51:4203--4215, 2005.

\bibitem{carroll1970analysis}
J.~D. Carroll and J.-J. Chang.
\newblock Analysis of individual differences in multidimensional scaling via an
  {N}-way generalization of “{E}ckart-{Y}oung” decomposition.
\newblock {\em Psychometrika}, 35(3):283--319, 1970.

\bibitem{charikar2002finding}
M.~Charikar, K.~Chen, and M.~Farach-Colton.
\newblock Finding frequent items in data streams.
\newblock In {\em International Colloquium on Automata, Languages, and
  Programming}, pages 693--703. Springer, 2002.

\bibitem{charikar2004finding}
M.~Charikar, K.~Chen, and M.~Farach-Colton.
\newblock Finding frequent items in data streams.
\newblock {\em Theoretical Computer Science}, 312(1):3--15, 2004.

\bibitem{cormode2005s}
G.~Cormode and S.~Muthukrishnan.
\newblock What's hot and what's not: tracking most frequent items dynamically.
\newblock {\em ACM Transactions on Database Systems (TODS)}, 30(1):249--278,
  2005.

\bibitem{dasgupta2010sparse}
A.~Dasgupta, R.~Kumar, and T.~Sarl{\'o}s.
\newblock A sparse {J}ohnson--{L}indenstrauss transform.
\newblock In {\em Proceedings of the forty-second ACM symposium on Theory of
  computing}, pages 341--350. ACM, 2010.

\bibitem{de2008tensor}
V.~De~Silva and L.-H. Lim.
\newblock Tensor rank and the ill-posedness of the best low-rank approximation
  problem.
\newblock {\em SIAM Journal on Matrix Analysis and Applications},
  30(3):1084--1127, 2008.

\bibitem{RefWorks:373}
Y.~C. Eldar and G.~Kutyniok.
\newblock {\em Compressed sensing: theory and applications}.
\newblock Cambridge University Press, 2012.

\bibitem{RefWorks:45}
S.~Foucart and H.~Rauhut.
\newblock A mathematical introduction to compressive sensing.
\newblock {\em Applied and Numerical Harmonic Analysis}, 2013.

\bibitem{foucart2013book}
S.~Foucart and H.~Rauhut.
\newblock {\em A mathematical introduction to compressive sensing}.
\newblock Springer, 2013.

\bibitem{gilbert2014recent}
A.~C. Gilbert, P.~Indyk, M.~Iwen, and L.~Schmidt.
\newblock Recent developments in the sparse fourier transform: A compressed
  fourier transform for big data.
\newblock {\em IEEE Signal Processing Magazine}, 31(5):91--100, 2014.

\bibitem{gilbert2008group}
A.~C. Gilbert, M.~A. Iwen, and M.~J. Strauss.
\newblock Group testing and sparse signal recovery.
\newblock In {\em 2008 42nd Asilomar Conference on Signals, Systems and
  Computers}, pages 1059--1063. IEEE, 2008.

\bibitem{gross2010quantum}
D.~Gross, Y.-K. Liu, S.~T. Flammia, S.~Becker, and J.~Eisert.
\newblock Quantum state tomography via compressed sensing.
\newblock {\em Physical review letters}, 105(15):150401, 2010.

\bibitem{harshman1970foundations}
R.~A. Harshman et~al.
\newblock Foundations of the {PARAFAC} procedure: {M}odels and conditions for
  an ``explanatory" multimodal factor analysis.
\newblock 1970.

\bibitem{iwen2016distributed}
M.~Iwen and B.~Ong.
\newblock A distributed and incremental {SVD} algorithm for agglomerative data
  analysis on large networks.
\newblock {\em SIAM Journal on Matrix Analysis and Applications},
  37(4):1699--1718, 2016.

\bibitem{iwen2010combinatorial}
M.~A. Iwen.
\newblock Combinatorial sublinear-time fourier algorithms.
\newblock {\em Foundations of Computational Mathematics}, 10(3):303--338, 2010.

\bibitem{iwen2013improved}
M.~A. Iwen.
\newblock Improved approximation guarantees for sublinear-time fourier
  algorithms.
\newblock {\em Applied And Computational Harmonic Analysis}, 34(1):57--82,
  2013.

\bibitem{iwen2014compressed}
M.~A. Iwen.
\newblock Compressed sensing with sparse binary matrices: Instance optimal
  error guarantees in near-optimal time.
\newblock {\em Journal of Complexity}, 30(1):1--15, 2014.

\bibitem{jin2019faster}
R.~Jin, T.~G. Kolda, and R.~Ward.
\newblock Faster {J}ohnson--{L}indenstrauss transforms via kronecker products.
\newblock {\em arXiv preprint arXiv:1909.04801}, 2019.

\bibitem{johnson1984extensions}
W.~B. Johnson and J.~Lindenstrauss.
\newblock Extensions of {L}ipschitz mappings into a {H}ilbert space.
\newblock {\em Contemporary mathematics}, 26(189-206):1, 1984.

\bibitem{kane2014sparser}
D.~M. Kane and J.~Nelson.
\newblock Sparser {J}ohnson-{L}indenstrauss transforms.
\newblock {\em Journal of the ACM}, 61(1):4, 2014.

\bibitem{kolda2001orthogonal}
T.~G. Kolda.
\newblock Orthogonal tensor decompositions.
\newblock {\em SIAM Journal on Matrix Analysis and Applications},
  23(1):243--255, 2001.

\bibitem{kolda2009tensor}
T.~G. Kolda and B.~W. Bader.
\newblock Tensor decompositions and applications.
\newblock {\em SIAM review}, 51(3):455--500, 2009.

\bibitem{krahmer2011new}
F.~Krahmer and R.~Ward.
\newblock New and improved {J}ohnson--{L}indenstrauss embeddings via the
  restricted isometry property.
\newblock {\em SIAM Journal on Mathematical Analysis}, 43(3):1269--1281, 2011.

\bibitem{larsen2017optimality}
K.~G. Larsen and J.~Nelson.
\newblock Optimality of the {J}ohnson-{L}indenstrauss lemma.
\newblock In {\em 2017 IEEE 58th Annual Symposium on Foundations of Computer
  Science (FOCS)}, pages 633--638. IEEE, 2017.

\bibitem{li2017near}
X.~Li, J.~Haupt, and D.~Woodruff.
\newblock Near optimal sketching of low-rank tensor regression.
\newblock In {\em Advances in Neural Information Processing Systems}, pages
  3466--3476, 2017.

\bibitem{liu2012tensor}
J.~Liu, P.~Musialski, P.~Wonka, and J.~Ye.
\newblock Tensor completion for estimating missing values in visual data.
\newblock {\em IEEE transactions on pattern analysis and machine intelligence},
  35(1):208--220, 2012.

\bibitem{lubich2008quantum}
C.~Lubich.
\newblock {\em From quantum to classical molecular dynamics: reduced models and
  numerical analysis}.
\newblock European Mathematical Society, 2008.

\bibitem{malik2018low}
O.~A. Malik and S.~Becker.
\newblock Low-rank {T}ucker decomposition of large tensors using tensorsketch.
\newblock {\em Advances in neural information processing systems},
  31:10096--10106, 2018.

\bibitem{malik2019guarantees}
O.~A. Malik and S.~Becker.
\newblock Guarantees for the {K}ronecker fast {J}ohnson-{L}indenstrauss
  transform using a coherence and sampling argument, 2019.

\bibitem{merhi2019new}
S.~Merhi, R.~Zhang, M.~A. Iwen, and A.~Christlieb.
\newblock A new class of fully discrete sparse fourier transforms: Faster
  stable implementations with guarantees.
\newblock {\em Journal of Fourier Analysis and Applications}, 25(3):751--784,
  2019.

\bibitem{pagh2013compressed}
R.~Pagh.
\newblock Compressed matrix multiplication.
\newblock {\em ACM Transactions on Computation Theory (TOCT)}, 5(3):1--17,
  2013.

\bibitem{pham2013fast}
N.~Pham and R.~Pagh.
\newblock Fast and scalable polynomial kernels via explicit feature maps.
\newblock In {\em Proceedings of the 19th ACM SIGKDD international conference
  on Knowledge discovery and data mining}, pages 239--247, 2013.

\bibitem{rakhshan2020tensorized}
B.~T. Rakhshan and G.~Rabusseau.
\newblock Tensorized random projections.
\newblock {\em arXiv preprint arXiv:2003.05101}, 2020.

\bibitem{rauhut2017low}
H.~Rauhut, R.~Schneider, and {\v{Z}}.~Stojanac.
\newblock Low rank tensor recovery via iterative hard thresholding.
\newblock {\em Linear Algebra and its Applications}, 523:220--262, 2017.

\bibitem{recht2010guaranteed}
B.~Recht, M.~Fazel, and P.~A. Parrilo.
\newblock Guaranteed minimum-rank solutions of linear matrix equations via
  nuclear norm minimization.
\newblock {\em SIAM review}, 52(3):471--501, 2010.

\bibitem{romera2013multilinear}
B.~Romera-Paredes, H.~Aung, N.~Bianchi-Berthouze, and M.~Pontil.
\newblock Multilinear multitask learning.
\newblock In {\em International Conference on Machine Learning}, pages
  1444--1452, 2013.

\bibitem{segal2013improved}
B.~Segal and M.~Iwen.
\newblock Improved sparse fourier approximation results: faster implementations
  and stronger guarantees.
\newblock {\em Numerical Algorithms}, 63(2):239--263, 2013.

\bibitem{shi2020higher}
Y.~Shi and A.~Anandkumar.
\newblock Higher-order count sketch: Dimensionality reduction that retains
  efficient tensor operations.
\newblock {\em arXiv}, page 394, 2020.

\bibitem{sun2018tensor}
Y.~Sun, Y.~Guo, J.~A. Tropp, and M.~Udell.
\newblock Tensor random projection for low memory dimension reduction.
\newblock In {\em NeurIPS Workshop on Relational Representation Learning},
  2018.

\bibitem{tsitsikas2018core}
G.~Tsitsikas and E.~E. Papalexakis.
\newblock The core consistency of a compressed tensor.
\newblock {\em arXiv preprint arXiv:1811.07428}, 2018.

\bibitem{tucker1966some}
L.~R. Tucker.
\newblock Some mathematical notes on three-mode factor analysis.
\newblock {\em Psychometrika}, 31(3):279--311, 1966.

\bibitem{vannieuwenhoven2012new}
N.~Vannieuwenhoven, R.~Vandebril, and K.~Meerbergen.
\newblock A new truncation strategy for the higher-order singular value
  decomposition.
\newblock {\em SIAM Journal on Scientific Computing}, 34(2):1027--1052, 2012.

\bibitem{vasilescu2005multilinear}
M.~A.~O. Vasilescu and D.~Terzopoulos.
\newblock Multilinear independent components analysis.
\newblock In {\em 2005 IEEE Computer Society Conference on Computer Vision and
  Pattern Recognition (CVPR'05)}, volume~1, pages 547--553. IEEE, 2005.

\bibitem{vershynin2018high}
R.~Vershynin.
\newblock {\em High-dimensional probability: An introduction with applications
  in data science}, volume~47.
\newblock Cambridge University Press, 2018.

\bibitem{vershynin2019concentration}
R.~Vershynin.
\newblock Concentration inequalities for random tensors.
\newblock {\em arXiv preprint arXiv:1905.00802}, 2019.

\bibitem{wang2015fast}
Y.~Wang, H.-Y. Tung, A.~J. Smola, and A.~Anandkumar.
\newblock Fast and guaranteed tensor decomposition via sketching.
\newblock In {\em Advances in Neural Information Processing Systems}, pages
  991--999, 2015.

\bibitem{zare2018extension}
A.~Zare, A.~Ozdemir, M.~A. Iwen, and S.~Aviyente.
\newblock Extension of {PCA} to higher order data structures: {A}n introduction
  to tensors, tensor decompositions, and tensor {PCA}.
\newblock {\em Proceedings of the IEEE}, 106(8):1341--1358, 2018.

\bibitem{zhang2013hyperspectral}
H.~Zhang, W.~He, L.~Zhang, H.~Shen, and Q.~Yuan.
\newblock Hyperspectral image restoration using low-rank matrix recovery.
\newblock {\em IEEE Transactions on Geoscience and Remote Sensing},
  52(8):4729--4743, 2013.

\end{thebibliography}

\appendix
\section{Proofs of the Tensor properties and JL Results from Section~\ref{sec:background}} 
\label{AppPreliminaryProofs}
In this section, we give the proofs of the Lemmas \ref{lem:ProdProps}, \ref{lem:InnProdJL} and \ref{lem:simplenetsubspace}. The first result lists classical tensor properties we constantly rely on in this paper.

\begin{proof}[Proof of Lemma~\ref{lem:ProdProps}]
The first property follows from the fact that
\begin{align*}
\left(\left(\alpha \mathcal{A} + \beta \mathcal{B} \right) \bigcirc \mathcal{C} \right)_{i_1, \dots, i_d, i'_1, \dots, i'_{d'}} ~=~ \left(\alpha \mathcal{A} + \beta \mathcal{B} \right)_{i_1, \dots, i_d}\mathcal{C}_{i'_1, \dots, i'_{d'}}~=~\left(\alpha \mathcal{A}_{i_1, \dots, i_d} + \beta \mathcal{B}_{i_1, \dots, i_d}\right)\mathcal{C}_{i'_1, \dots, i'_{d'}}.
\end{align*}
To establish the property $(ii)$ we note that
\begin{align*}
\left\langle \mathcal{A} \bigcirc \mathcal{C}, \mathcal{B} \bigcirc \mathcal{D} \right\rangle &=  \sum_{i_1=1}^{n_1} \dots\sum_{i_{d}=1}^{n_{d}} {\sum_{i'_1=1}^{n'_1} \dots \sum_{i'_{d}=1}^{n'_{d'}} \mathcal{A}_{i_1,i_2,...,i_{d}} \mathcal{C}_{i'_1, \dots, i'_{d'}} \overline{\mathcal{B}_{i_1,i_2,...,i_d}}~ \overline{\mathcal{D}_{i'_1,...,i'_{d'}}}}\\
&= \left( \sum_{i_1=1}^{n_1} \dots\sum_{i_{d}=1}^{n_{d}} \mathcal{A}_{i_1,i_2,...,i_d} \overline{\mathcal{B}_{i_1,i_2,...,i_d}}  \right) \left( \sum_{i'_1=1}^{n'_1} \dots \sum_{i'_{d}=1}^{n'_{d'}}  \mathcal{C}_{i'_1, \dots, i'_{d'}}  \overline{\mathcal{D}_{i'_1, \dots, i'_{d'}}} \right)\\
&= \left\langle \mathcal{A}, \mathcal{B}\right\rangle \left\langle \mathcal{C}, \mathcal{D}\right\rangle.
\end{align*}
The facts $(iii), (iv)$ and $(vi)$ are easily established using mode-$j$ unfoldings formula~\eqref{unfolding_via_product}.  To establish $(iii)$, we note that 
\begin{align*}
\left(\left( \alpha \mathcal{A} + \beta \mathcal{B} \right) \times_j \mathbf{U}_j \right)_{(j)} &= \mathbf{U}_j \left( \alpha \mathcal{A} + \beta \mathcal{B} \right)_{(j)} = \mathbf{U}_j \left( \alpha \mathbf{A}_{(j)} + \beta \mathbf{B}_{(j)} \right)\\ &= \alpha \mathbf{U}_j \mathbf{A}_{(j)} + \beta \mathbf{U}_j \mathbf{B}_{(j)} = \alpha (\mathcal{A} \times_j \mathbf{U}_j)_{(j)} + \beta (\mathcal{B} \times_j \mathbf{U}_j)_{(j)}.
\end{align*}
Reshaping both sides of the derived equality back into their original tensor forms now completes the proof.\footnote{Here we are implicitly using that mode-$j$ unfolding provides a vector space isomorphism between $\mathbbm{C}^{n_1 \times n_2 \times \dots \times n_d}$ and $\mathbbm{C}^{n_j \times \prod_{\ell \in [d] \setminus \{ j \}} n_\ell}$ for all $j \in [d]$.}  The proof of $(iv)$ using unfoldings is nearly identical.  To prove $(vi)$ we may again use mode-$j$ unfoldings to see that
\begin{align*}
\left(\mathcal{A} \times_j \mathbf{U}_j \times_j \mathbf{W} \right)_{(j)} &= \mathbf{W} \left( \mathcal{A} \times_j \mathbf{U}_j \right)_{(j)} = \mathbf{W} \mathbf{U}_j \mathbf{A}_{(j)} = \left( \mathcal{A} \times_j \mathbf{W}\mathbf{U}_j \right)_{(j)}.
\end{align*}
Reshaping these expressions back into their original tensor forms again completes the proof.
To prove $(v)$, it is perhaps easiest to appeal directly to the component-wise definition of the mode-$j$ product given in equation \eqref{equ:DefModejProduct}.  Suppose that $\ell > j$ (the case $\ell < j$ is nearly identical).  Set $\mathbf{U} := \mathbf{U}_j$ and $\mathbf{V} := \mathbf{V}_\ell$ to simplify subscript notation.  We have for all $k \in [m_j]$, $l \in [m_\ell]$, and $i_q \in [n_q]$ with $q \notin \{ j,\ell \}$ that
\begin{align*}
\left( \left(\mathcal{A} \times_j \mathbf{U} \right) \times_\ell \mathbf{V} \right)_{i_1, \dots, i_{j-1}, k, i_{j+1}, \dots, i_{\ell-1}, l, i_{\ell+1}, \dots, i_d} &= \sum_{i_{\ell}=1}^{n_\ell} \left(\mathcal{A} \times_j \mathbf{U} \right)_{i_{1},\dots,i_{j-1}, k, i_{j+1},\dots,i_{\ell},\dots,i_{d}} \mathbf{V}_{l,i_{\ell}} \\
&= \sum_{i_{\ell}=1}^{n_\ell} \left(\sum_{i_{j}=1}^{n_j} \mathcal{A}_{i_{1},\dots,i_{j},\dots,i_{\ell},\dots,i_{d}} \mathbf{U}_{k,i_{j}} \right) \mathbf{V}_{l,i_{\ell}} \\
&= \sum_{i_{j}=1}^{n_j} \left( \sum_{i_{\ell}=1}^{n_\ell} \mathcal{A}_{i_{1},\dots,i_{j},\dots,i_{\ell},\dots,i_{d}} \mathbf{V}_{l,i_{\ell}} \right) \mathbf{U}_{k,i_{j}} \\
&= \sum_{i_{j}=1}^{n_j} \left(\mathcal{A} \times_\ell \mathbf{V} \right)_{i_{1},\dots, i_j, \dots, i_{\ell-1}, l, i_{\ell+1}, \dots,i_{d}} \mathbf{U}_{k,i_{j}} \\
&= \left( \left(\mathcal{A} \times_\ell \mathbf{U} \right) \times_j \mathbf{U}  \right)_{i_1, \dots, i_{j-1}, k, i_{j+1}, \dots, i_{\ell-1}, l, i_{\ell+1}, \dots, i_d}.
\end{align*}
\end{proof}  

\bigskip

Our second lemma of this appendix proves that JL embeddings can also preserve the inner products between all elements of a given finite set.

\begin{proof}[Proof of Lemma~\ref{lem:InnProdJL}]
The result for vectors is a well known consequence of the polarization identity for inner products.  We have that
\begin{align*}
\left| \left\langle \mathbf{A}{\bf x},~ \mathbf{A}{\bf y} \right\rangle - \left\langle {\bf x},~ {\bf y}\right\rangle \right| &= \left| \frac{1}{4} \sum^3_{\ell = 0} \mathbbm{i}^\ell \left( \left\| \mathbf{A} {\bf x} + \mathbbm{i}^\ell \mathbf{A} {\bf y} \right\|^2_2  -  \left\| {\bf x} + \mathbbm{i}^\ell {\bf y} \right\|^2_2 \right)\right| ~=~ \left| \frac{1}{4} \sum^3_{\ell = 0} \mathbbm{i}^\ell \epsilon_\ell \left\| {\bf x} + \mathbbm{i}^\ell {\bf y} \right\|^2_2 \right|\\
&\leq \frac{1}{4} \sum^3_{\ell = 0} \epsilon \left( \| {\bf x} \|_2 + \| {\bf y} \|_2 \right)^2 ~=~\epsilon \left( \| {\bf x} \|_2 + \| {\bf y} \|_2 \right)^2 ~=~ \epsilon \left( \| {\bf x} \|_2^2 + \| {\bf y} \|_2^2 + 2\| {\bf x} \|_2\| {\bf y} \|_2 \right)\\ 
&\leq 2 \epsilon \left( \| {\bf x} \|_2^2 + \| {\bf y} \|_2^2 \right) ~\leq~ 4 \epsilon \cdot \max \left\{ \| {\bf x} \|_2^2, \| {\bf y} \|_2^2 \right\},
\end{align*}
where the second to last inequality follows from Young's inequality for products.
The proof of the tensor counterpart is essentially identifical, with $L\left( \mathcal{X} \right)$ replacing $\mathbf{A}\mathbf{x}$, and making use of the linearity of $L$.
\end{proof}

\bigskip

The next lemma is a version of classical covering estimate in high dimensional spaces.

\begin{proof}[Proof of Lemma~\ref{lem:simplenetsubspace}]
The cardinality bound on $\mathcal{C}$ can be obtained from the covering results in Appendix C of \cite{foucart2013book}.  It is enough to establish $\eqref{equ:subspacepres}$ for an arbitrary ${\bf x} \in \mathcal{S}_{\ell^2}$ due to the linearity of ${\bf A}$ and $\mathcal{L}$.  Let $\Delta := \| {\bf A} \|_{2 \rightarrow 2} \geq 0$, and choose an element ${\bf y} \in \mathcal{C}$ with $\| {\bf x} - {\bf y} \| \leq \epsilon/16$.  We have that
\begin{align*}
\| {\bf A} {\bf x}\|_2 - \| {\bf x} \|_2 ~&\leq~  \| {\bf A} {\bf y}\|_2 + \| {\bf A} ( {\bf x} - {\bf y}) \|_2 -1~\leq~\sqrt{1+\epsilon/2} - 1 + \| {\bf A} ( {\bf x} - {\bf y}) \|_2 \\ 
&\leq~ (1+\epsilon/4) - 1 + \Delta \epsilon/16  ~=~ (\epsilon/4) (1 + \Delta / 4 )
\end{align*}
holds for all ${\bf x} \in \mathcal{S}_{\ell^2}$.  This, in turn, means that the upper bound above will hold for a vector ${\bf x}$ realizing $\| {\bf A} {\bf x} \| = \| {\bf A} \|_{2 \rightarrow 2}$ so that $\Delta - 1 ~\leq~(\epsilon/4) (1 + \Delta / 4 )$ must also hold.  As a consequence, $\Delta \leq 1 + \epsilon/4 + \Delta \epsilon/16 \implies \Delta \leq \frac{1 + \epsilon/4}{1-\epsilon/16} \leq 1+\epsilon/3$.  The upper bound now follows. \\
To establish the lower bound we define $\delta := \inf_{{\bf z} \in \mathcal{S}_{\ell^2}}\| {\bf A} {\bf z} \| \geq 0$ and note that this quantity will also be realized by some element of the compact set $\mathcal{S}_{\ell^2}$.  As above we consider this minimizing vector ${\bf x} \in \mathcal{S}_{\ell^2}$ and choose an element ${\bf y} \in \mathcal{C}$ with $\| {\bf x} - {\bf y} \| \leq \epsilon/16$ in order to see that
\begin{align*}
\delta - 1 = \| {\bf A} {\bf x}\|_2 - \| {\bf x} \|_2 ~&\geq~  \| {\bf A} {\bf y}\|_2 - \| {\bf A} ( {\bf x} - {\bf y}) \|_2 -1~\geq~\sqrt{1- \epsilon/2} - 1 - \| {\bf A} ( {\bf x} - {\bf y}) \|_2 \\ 
&\geq~ (1-\epsilon/3) - 1 - \Delta \epsilon/16  ~\geq~ -\left( \epsilon/3 + \epsilon/16 \left( 1 + \epsilon/3 \right) \right) \\
&\geq~ -\left( \epsilon/3 + \epsilon/16 + \epsilon/48 \right) = -5\epsilon/12.
\end{align*}
As a consequence, $\delta \geq 1-5\epsilon/12$.  The lower bound now follows.
\end{proof}

\section{Proofs of the Intermediate Results from Sections~\ref{seq:proofof thm3}~and~\ref{sec:GenModewiseLSsec}}
\label{AppMinSuppProofs}

In this section, we give the proofs of all auxiliary results for the proof of Theorem~\ref{cor:MainOblEmb}.  All the statements are listed in Section~\ref{seq:proofof thm3}.

\begin{proof}[Proof of Lemma~\ref{lem:NewStandForm}]
Using Lemma~\ref{lem:ProdProps}, the linearity of tensor matricization, and \eqref{equ:KronModenFlat} we can see that the mode-$j$ unfolding of $\mathcal{Y}'$ satisfies
\begin{align*}
{\bf Y'}_{(j)} ~&=~  {\bf B} {\bf Y}_{(j)} ~=~  {\bf B} \sum_{k=1}^r \alpha_k \left( \bigcirc^d_{\ell = 1} {\bf y}^{(\ell)}_k \right)_{(j)} ~=~ \sum_{k=1}^r \alpha_k  {\bf B} {\bf y}^{(j)}_k \left( \otimes_{\ell \neq j} {\bf y}^{(\ell)}_k \right)^\top\\
&=~ \sum_{k=1}^r \left( \alpha_k \left\| {\bf B} {\bf y}^{(j)}_k \right\|_2 \right)  \frac{{\bf B}{\bf y}^{(j)}_k}{\left\| {\bf B} {\bf y}^{(j)}_k \right\|_2} \left( \otimes_{\ell \neq j} {\bf y}^{(\ell)}_k \right)^\top.
\end{align*}
Refolding ${\bf Y'}_{(j)}$ back into a $d$-mode tensor then gives us our first equality.  The second two equalities now follow directly from the definitions of modewise coherence.
\end{proof}

\bigskip

\begin{proof}[Proof of Lemma~\ref{lem:ModejNormexp}]
Using Lemma~\ref{lem:ProdProps}, the linearity of tensor matricization, and \eqref{equ:KronModenFlat} once again we can see that
\begin{align*}
\| \mathcal{Y} \times_j {\bf B} \|^2 &= \left\| \sum_{k=1}^r \alpha_k \left( \bigcirc^d_{\ell = 1} {\bf y}^{(\ell)}_k \times_j {\bf B} \right) \right\|^2 ~=~ \left\| \sum_{k=1}^r \alpha_k {\bf B}  {\bf y}^{(j)}_k \left( \otimes_{\ell \neq j} {\bf y}^{(\ell)}_k \right)^\top \right\|^2_{\rm F} \\ &= \sum_{k,h=1}^r \left \langle \alpha_k {\bf B}  {\bf y}^{(j)}_k \left( \otimes_{\ell \neq j} {\bf y}^{(\ell)}_k \right)^\top, \alpha_h {\bf B}  {\bf y}^{(j)}_h \left( \otimes_{\ell \neq j} {\bf y}^{(\ell)}_h \right)^\top\right \rangle_{\rm F}
\end{align*}
where $\| \cdot \|_{\rm F}$ and $\left \langle \cdot, \cdot \right\rangle_{\rm F}$ denote the Frobenius matrix norm and inner product, respectively.  Computing the Frobenius inner products above columnwise by expressing each ${\bf B}  {\bf y}^{(j)}_k \left( \otimes_{\ell \neq j} {\bf y}^{(\ell)}_k \right)^\top$ as a sum of its individual columns (each represented as a matrix with only one nonzero column) we can further see that
\begin{equation*}
\| \mathcal{Y} \times_j {\bf B} \|^2 ~=~ \sum_{k,h=1}^r \sum^{\prod_{\ell \neq j} n_\ell}_{a = 1} \alpha_k \left( \otimes_{\ell \neq j} {\bf y}^{(\ell)}_k \right)_a \overline{\alpha_h \left( \otimes_{\ell \neq j} {\bf y}^{(\ell)}_h \right)_a} \left \langle  {\bf B}  {\bf y}^{(j)}_k,  {\bf B} {\bf y}^{(j)}_h \right \rangle.
\end{equation*}
as we wished to show.
\end{proof}

\bigskip

\begin{proof}[Proof of Proposition~\ref{thm:ForInduction}] We prove each property in order below.\\

\noindent \underline{\it Proof of $\mathbf (\boldsymbol{\dagger})$:} By Lemma~\ref{lem:NewStandForm} we have for all $k \in [r]$ that
$$\left| \alpha'_k - \alpha_k \right| ~=~ \left| \alpha_k{\left\| {\bf A} {\bf y}^{(j)}_k \right\|_2} - \alpha_k \right| ~=~  \left| {\left\| {\bf A} {\bf y}^{(j)}_k \right\|_2} - 1 \right| | \alpha_k | ~\leq~ \epsilon |\alpha_k| / 4$$
as we wished to prove.\\

\noindent \underline{\it Proof of $\mathbf (\boldsymbol{\dagger \dagger})$:} Appealing to Lemma~\ref{lem:NewStandForm} and the definition of $j$-mode coherence we have that 
$$\mu_{\mathcal{Y}',j} = \max_{\substack{k, h \in [r]\\ k \neq h}} \frac{\left| \left \langle {\bf A} {\bf y}^{(j)}_k , {\bf A} {\bf y}^{(j)}_h \right \rangle \right|}{\left\| {\bf A} {\bf y}^{(j)}_k \right\|_2 \left\| {\bf A} {\bf y}^{(j)}_h \right\|_2} ~\leq~ \max_{\substack{k, h \in [r]\\ k \neq h}} \frac{\left| \left \langle {\bf y}^{(j)}_k , {\bf y}^{(j)}_h \right \rangle \right| + \epsilon}{1 - \frac{\epsilon}{4}} = \frac{ \mu_{\mathcal{Y},j} + \epsilon}{1 - \frac{\epsilon}{4}},$$
where the inequality follows from Lemma~\ref{lem:InnProdJL} combined with ${\bf A}$ being an $\left(\epsilon / 4 \right)$-JL embedding.\\

\noindent \underline{\it Proof of $\mathbf (\boldsymbol{\dagger \dagger \dagger})$:}~~Applying Lemma~\ref{lem:ModejNormexp} with ${\bf B} = {\bf A}$ and ${\bf B} = {\bf I}$, respectively, we can see that 
\begin{equation}
\| \mathcal{Y}' \|^2 -  \| \mathcal{Y} \|^2 ~=~ \sum_{k,h=1}^r \sum^{\prod_{\ell \neq j} n_\ell}_{a = 1} \alpha_k \left( \otimes_{\ell \neq j} {\bf y}^{(\ell)}_k \right)_a \overline{\alpha_h \left( \otimes_{\ell \neq j} {\bf y}^{(\ell)}_h \right)_a} \left( \left \langle  {\bf A}  {\bf y}^{(j)}_k,  {\bf A} {\bf y}^{(j)}_h \right \rangle - \left \langle {\bf y}^{(j)}_k, {\bf y}^{(j)}_h \right \rangle \right). 
\label{equ:FiberinnerProd}
\end{equation}
Applying Lemma~\ref{lem:InnProdJL} to each inner product in \eqref{equ:FiberinnerProd} we can now see that 
$$\left \langle  {\bf A}  {\bf y}^{(j)}_k,  {\bf A} {\bf y}^{(j)}_h \right \rangle ~=~ \left \langle {\bf y}^{(j)}_k, {\bf y}^{(j)}_h \right \rangle + \epsilon_{k,h}$$ 
for some $\epsilon_{k,h} \in \mathbbm{C}$ with $\left| \epsilon_{k,h} \right| \leq \epsilon$.  As a result we have that
\begin{align*}
\left| \| \mathcal{Y} \times_j {\bf A} \|^2 -  \| \mathcal{Y} \|^2\right|~&=~\left| \sum_{k,h=1}^r \sum^{\prod_{\ell \neq j} n_\ell}_{a = 1} \alpha_k \left( \otimes_{\ell \neq j} {\bf y}^{(\ell)}_k \right)_a \overline{\alpha_h \left( \otimes_{\ell \neq j} {\bf y}^{(\ell)}_h \right)_a} \epsilon_{k,h}  \right|.\\
&=~\left| \sum_{k,h=1}^r \alpha_k \overline{\alpha_h} \epsilon_{k,h}  \sum^{\prod_{\ell \neq j} n_\ell}_{a = 1} \left( \otimes_{\ell \neq j} {\bf y}^{(\ell)}_k \right)_a \overline{ \left( \otimes_{\ell \neq j} {\bf y}^{(\ell)}_h \right)_a} \right|\\
&=~\left| \sum_{k,h=1}^r \alpha_k \overline{\alpha_h} \epsilon_{k,h}  \left \langle \bigcirc_{\ell \neq j} {\bf y}^{(\ell)}_k,\bigcirc_{\ell \neq j} {\bf y}^{(\ell)}_h \right  \rangle \right|\\ 
&\leq \left| \sum_{k=1}^r |\alpha_k|^2 \epsilon_{k,k}  \left \| \bigcirc_{\ell \neq j} {\bf y}^{(\ell)}_k \right \|^2 \right| + \left| \sum_{k \neq h} \alpha_k \overline{\alpha_h} \epsilon_{k,h}  \left \langle \bigcirc_{\ell \neq j} {\bf y}^{(\ell)}_k,\bigcirc_{\ell \neq j} {\bf y}^{(\ell)}_h \right  \rangle \right|.
\end{align*}

Noting that $\left \| \bigcirc_{\ell \neq j} {\bf y}^{(\ell)}_k \right \|^2 = 1$ by Lemma~\ref{lem:ProdProps} since $\left\| {\bf y}^{(\ell)}_k \right\|_2 = 1$ for all $\ell \in [d]$ and $k \in [r]$, we now have that
\begin{align}
\left| \| \mathcal{Y} \times_j {\bf A} \|^2 -  \| \mathcal{Y} \|^2\right|~&\leq~\epsilon \left| \sum_{k=1}^r |\alpha_k|^2 \right| + \left| \sum_{k \neq h} \alpha_k \overline{\alpha_h} \epsilon_{k,h}  \left \langle \bigcirc_{\ell \neq j} {\bf y}^{(\ell)}_k,\bigcirc_{\ell \neq j} {\bf y}^{(\ell)}_h \right  \rangle \right|\notag\\
&=~\epsilon \| {\boldsymbol \alpha} \|_2^2+ \left| \left \langle {\bf E^\top \boldsymbol \alpha}, {\boldsymbol \alpha}\right \rangle \right|,\notag
\end{align}
where ${\bf E} \in \mathbbm{C}^{r \times r}$ is zero on its diagonal, and $E_{k,h} = \epsilon_{k,h}  \left \langle \bigcirc_{\ell \neq j} {\bf y}^{(\ell)}_k,\bigcirc_{\ell \neq j} {\bf y}^{(\ell)}_h \right  \rangle$ for $k \neq h$.
As a result, $\left| \| \mathcal{Y} \times_j {\bf A} \|^2 -  \| \mathcal{Y} \|^2\right| \leq \left( \epsilon + \left\| {\bf E}^\top \right\|_{2 \rightarrow 2} \right) \| {\boldsymbol \alpha} \|^2_2$, where the operator norm $\left\| {\bf E}^\top \right\|_{2 \rightarrow 2}$ satisfies
$$\left\| {\bf E}^\top \right\|_{2 \rightarrow 2} ~\leq~ \| {\bf E} \|_{F} \leq \sqrt{\sum_{k \neq h} \left| \left \langle \bigcirc_{\ell \neq j} {\bf y}^{(\ell)}_k,\bigcirc_{\ell \neq j} {\bf y}^{(\ell)}_h \right  \rangle \right|^2 \epsilon^2 } ~=~ \epsilon ~ \sqrt{\sum_{k \neq h} \left| \left \langle \bigcirc_{\ell \neq j} {\bf y}^{(\ell)}_k,\bigcirc_{\ell \neq j} {\bf y}^{(\ell)}_h \right  \rangle \right|^2 }.$$  
Finally, Lemma~\ref{lem:ProdProps} and the definition of $\mu_\mathcal{Y}$ implies that
$$\| {\bf E} \|_{2 \rightarrow 2} ~\leq~ \epsilon \sqrt{r(r-1)} \prod_{\ell \neq j} \mu_{\mathcal{Y},\ell} ~\leq~ \epsilon r \mu_\mathcal{Y}^{d-1}.$$
Thus, we obtain the desired bound 
$$\left| \| \mathcal{Y} \times_j {\bf A} \|^2 -  \| \mathcal{Y} \|^2\right| ~\leq~ \epsilon \left( 1 + \sqrt{r(r-1)} \prod_{\ell \neq j} \mu_{\mathcal{Y},\ell} \right) \| {\boldsymbol \alpha} \|^2_2 \leq \epsilon \left( 1 + r \mu_\mathcal{Y}^{d-1} \right) \| {\boldsymbol \alpha} \|^2_2.$$
$\;$
\end{proof}

\bigskip

\begin{proof}[Proof of Lemma~\ref{lem:coefintermsofYnorm}]
Utilizing Lemma~\ref{lem:ProdProps} and the standard form of $\mathcal{Y}$ we can see that
\begin{align*}
\left| \| \mathcal{Y} \|^2 - \| {\boldsymbol \alpha} \|^2_2  \right| ~&=~ \left| \sum_{k, h=1}^r \alpha_k \overline{\alpha_h} \left \langle \bigcirc^d_{\ell = 1} {\bf y}^{(\ell)}_k,~\bigcirc^d_{\ell = 1} {\bf y}^{(\ell)}_h \right \rangle - \sum_{k = 1}^r |\alpha_k|^2 \right|\\ 
&=~ \left| \sum_{k \neq h}^r \alpha_k \overline{\alpha_h} \prod_{\ell = 1}^d \left \langle {\bf y}^{(\ell)}_k,~{\bf y}^{(\ell)}_h \right \rangle \right| ~\leq~ \mu_{\mathcal{Y}}'  \sum_{k \neq h}^r  \left| \alpha_k \overline{\alpha_h} \right|\\ 
&=~ \mu_{\mathcal{Y}}'  \left( \left( \sum_{k=1}^r |\alpha_k| \right)^2 - \sum_{k=1}^r |\alpha_k|^2 \right) ~\leq~ \mu_{\mathcal{Y}}' \left( \left( \sqrt{r} \| {\boldsymbol \alpha} \|_2 \right)^2 - \| {\boldsymbol \alpha} \|_2^2 \right)
\end{align*}

where the last inequality follows from Cauchy-Schwarz.  As a result we have that
$$\left| \| \mathcal{Y} \|^2 - \| {\boldsymbol \alpha} \|^2_2 \right| ~\leq~ \mu_{\mathcal{Y}}' (r - 1) \| {\boldsymbol \alpha} \|_2^2 $$
which in turn implies that 
$$ \| \mathcal{Y} \|^2 \geq \left(1 - (r - 1) \mu_{\mathcal{Y}}' \right)  \| {\boldsymbol \alpha} \|^2_2.$$
$\;$
\end{proof}

\bigskip

The following simple fact will be used repeatedly in the proof of Proposition~\ref{thm:ObliviousSubspaceEmbedded}.

\begin{remark}
Let $c, d \in \mathbbm{R}^+$.  Then, $\displaystyle \mathbbm{e}^c ~\geq~\left(1+\frac{c}{d} \right)^d$.
\label{lem:IknowCalculus}
\end{remark}

\bigskip

\begin{proof}[Proof of Proposition~\ref{thm:ObliviousSubspaceEmbedded}]
Let $\mathcal{Y}^{(0)} := \mathcal{Y}$, and for each $j \in [d]$ define the tensor 
$$\mathcal{Y}^{(j)}:= \mathcal{Y} \times_1 {\bf A}_1 \dots \times_j {\bf A}_j ~=~ \sum_{k=1}^r \alpha_{j,k} \bigcirc^d_{\ell = 1} {\bf y}^{(\ell)}_{j,k}$$
expressed in standard form via $j$ applications of Lemma~\ref{lem:NewStandForm}.  Note that parts ($\dagger$) and ($\dagger \dagger$) of Proposition~\ref{thm:ForInduction} imply that
\begin{enumerate}

\item[$(i)$] $\left| \alpha_{j,k} - \alpha_{j-1,k} \right| ~\leq~ \epsilon |\alpha_{j-1,k}| / 4d$ so that $|  \alpha_{j,k} | \leq (1 + \epsilon / 4d) |  \alpha_{j-1,k}|$ holds for all $k \in [r]$, and 

\item[$(ii)$] $\mu_{\mathcal{Y}^{(j)},j} ~\leq~ (\mu_{\mathcal{Y}^{(j-1)},j} + \epsilon/d)/(1 - \epsilon/4d)$, and $\mu_{\mathcal{Y}^{(j)},\ell} ~=~ \mu_{\mathcal{Y}^{(j-1)},\ell}$ for all $\ell \in [d] \setminus \{ j \}$,
\end{enumerate}
both hold for all and $j \in [d]$.
Using these facts it is not too difficult to inductively establish that both 
\begin{equation}
|  \alpha_{j,k} | \leq (1 + \epsilon / 4d)^j |  \alpha_{k}|,
\label{equ:InductCoefBound}
\end{equation}
and 
\begin{equation}
\displaystyle \prod_{\ell \neq j} \mu_{\mathcal{Y}^{(j-1)},\ell} ~\leq~ \left( \prod_{\ell < j}  \frac{\mu_{\mathcal{Y},\ell} + \epsilon/d}{1 - \epsilon/4d} \right) \prod_{\ell > j} \mu_{\mathcal{Y},\ell} ~\leq~ \left( \frac{\mu_{\mathcal{Y}} + \epsilon/d}{1 - \epsilon/4d} \right)^{j-1} \mu_{\mathcal{Y}}^{d-j},
\label{equ:InductCoherenceBound}
\end{equation}
 also hold for all $k \in [r]$ and $j \in [d]$.  Note that in \eqref{equ:InductCoherenceBound} we will let $\mu_{\mathcal{Y}}^{0} = 1$ even if $\mu_{\mathcal{Y}} = 0$ since this still yields the correct bound in the $j = d$ and $\mu_{\mathcal{Y}} = 0$ case.

Preceding with the desired error bound we can now see that
\begin{align*}
\left| \left\| \mathcal{Y} \right\|^2 - \left\| \mathcal{Y} \times_1 {\bf A}_1 \dots \times_d {\bf A}_d \right\|^2 \right| ~&=~ \left| \sum^{d-1}_{j = 0} \left\| \mathcal{Y}^{(j)} \right\|^2 - \left\| \mathcal{Y}^{(j+1)} \right\|^2 \right|\\
&\leq~\frac{\epsilon}{d} \sum^{d-1}_{j = 0} \left( 1 + \sqrt{r(r-1)} \prod_{\ell \neq j+1} \mu_{\mathcal{Y}^{(j)},\ell} \right) \| {\boldsymbol \alpha_j} \|^2_2\\
&\leq~\frac{\epsilon}{d} \sum^{d-1}_{j = 0} \left( 1 + \sqrt{r(r-1)} \left( \frac{\mu_{\mathcal{Y}} + \epsilon/d}{1 - \epsilon/4d} \right)^{j} \mu_{\mathcal{Y}}^{d-1-j} \right) (1 + \epsilon / 4d)^{2j} \| {\boldsymbol \alpha} \|^2_2\\
&\leq~\frac{\epsilon}{d} \sum^{d-1}_{j = 0} \left( 1 + \sqrt{r(r-1)} \left( \frac{\mu_{\mathcal{Y}} + \epsilon/d}{1 - \epsilon/4d} \right)^{j} \mu_{\mathcal{Y}}^{d-1-j} \right) (1 + 9\epsilon / 16d)^{j} \| {\boldsymbol \alpha} \|^2_2
\end{align*}
where we have used part ($\dagger \dagger \dagger$) of Proposition~\ref{thm:ForInduction}, \eqref{equ:InductCoefBound}, and \eqref{equ:InductCoherenceBound}.  Considering each term in the upper bound above separately, we have that
$$\left| \left\| \mathcal{Y} \right\|^2 - \left\| \mathcal{Y} \times_1 {\bf A}_1 \dots \times_d {\bf A}_d \right\|^2 \right| \leq \frac{\epsilon}{d}  \| {\boldsymbol \alpha} \|^2_2 \left( T_1 + \sqrt{r(r-1)}T_2 \right)$$
where
\begin{align*}
T_1 := \sum^{d-1}_{j = 0} (1 + 9\epsilon / 16d)^{j} ~=~ \frac{(1 + 9\epsilon / 16d)^d - 1}{9\epsilon / 16d} ~\leq~ \mathbbm{e} d
\end{align*}
using Remark~\ref{lem:IknowCalculus} and that $9\epsilon / 16 < 1$, and where
\begin{align*}
T_2 := \sum^{d-1}_{j = 0} \left( \frac{\mu_{\mathcal{Y}} + \epsilon/d}{1 - \epsilon/4d} \right)^{j} \mu_{\mathcal{Y}}^{d-1-j} (1 + 9\epsilon / 16d)^{j} ~\leq~ \sum^{d-1}_{j = 0} \left( \mu_{\mathcal{Y}} + \epsilon/d \right)^{j} \mu_{\mathcal{Y}}^{d-1-j} (1 + \epsilon / d)^j
\end{align*}
for $\epsilon \leq 3/4$.

Continuing to bound the second term we will consider three cases.  First, if $\mu_{\mathcal{Y}} = 0$ then 
$$T_2 ~\leq~ \left( \epsilon/d \right)^{d-1} (1 + \epsilon / d)^{d-1} ~\leq~ \mathbbm{e} \left( \epsilon/d \right)^{d-1},$$
using Remark~\ref{lem:IknowCalculus} and that $\epsilon< 1$.
Second, if $0 < \mu_{\mathcal{Y}} \leq \epsilon$ then
\begin{align*}
T_2 ~&\leq~ \sum^{d-1}_{j = 0} \left( \epsilon + \epsilon/d \right)^{j} \epsilon^{d-1-j} (1 + \epsilon / d)^j ~=~ \epsilon^{d-1} \sum^{d-1}_{j = 0} \left( 1 + 1/d \right)^{j} (1 + \epsilon / d)^j\\
~&\leq~ \epsilon^{d-1} d \left( 1 + 1/d \right)^{d} (1 + \epsilon / d)^d  ~\leq~ d \mathbbm{e}^{2} \epsilon^{d-1},
\end{align*}
using Remark~\ref{lem:IknowCalculus} and that $\epsilon< 1$ once more. If, however, $\mu_{\mathcal{Y}} > \epsilon$ then we can see that
\begin{align*}
T_2 ~&\leq~ \mu_{\mathcal{Y}}^{d-1}\sum^{d-1}_{j = 0} \left( 1 + \epsilon / \mu_{\mathcal{Y}} d \right)^{j} (1 + \epsilon / d)^j ~\leq~ \mu_{\mathcal{Y}}^{d-1} \sum^{d-1}_{j = 0} \left( 1 + 1/d \right)^{j} (1 + \epsilon / d)^j\\ 
&\leq~ \mu_{\mathcal{Y}}^{d-1} \cdot d  \left( 1 + 1/d \right)^d (1 + \epsilon / d)^d ~\leq~ \mu_{\mathcal{Y}}^{d-1} ~d~\mathbbm{e}^{1 + \epsilon} ~\leq~ d \mathbbm{e}^{2}    \mu_{\mathcal{Y}}^{d-1},
\end{align*}
where we have again utilized Remark~\ref{lem:IknowCalculus}.  The desired result now follows.
\end{proof}

\bigskip

\begin{proof}[Proof of Lemma~\ref{lem:IndivTensorJL}] Fix $t \in [p]$ and let $\mathcal{X}^{(0)} := \mathcal{Z}^{(t)}$, $\mathcal{X}^{(j)} := \mathcal{Z}^{(j,t)}$ for all $j \in [d-1]$, and $\mathcal{X}^{(d)} := \mathcal{Z}^{(d-1,t)} \times_{d} {\bf A}_{d} = \mathcal{Z}^{(t)} \times_1 {\bf A}_1 \dots \times_d {\bf A}_d$.
Choose any $j \in [d]$, and let ${\bf x}_{j,h} \in \mathbbm{C}^{n_j}$ denote the $h^{\rm th}$ column of the mode-$j$ unfolding of $\mathcal{X}^{(j-1)}$, denoted by ${\bf X}^{(j-1)}_{(j)}$.  It is easy to see that each ${\bf x}_{j,h}$ is a mode-$j$ fiber of $\mathcal{X}^{(j-1)} = \mathcal{Z}^{(j-1,t)}$ for each $1 \leq h \leq N'_j := \left(\prod^{j-1}_{\ell = 1}m_\ell \right) \left( \prod_{\ell = j+1} n_\ell \right)$.
Thus, we can see that
\begin{align*}
\left| \left\| \mathcal{X}^{(j-1)} \right\|^2 - \left\| \mathcal{X}^{(j)} \right\|^2 \right| ~&=~ \left| \left\| \mathcal{X}^{(j-1)} \right\|^2 - \left\| \mathcal{X}^{(j-1)} \times_j {\bf A}_j \right\|^2 \right| ~=~  \left| \left\| {\bf X}^{(j-1)}_{(j)} \right\|^2_{\rm F} - \left\| {\bf A}_j {\bf X}^{(j-1)}_{(j)} \right\|^2_{\rm F} \right|\\ 
&=~ \left| \sum^{N'_j}_{h=1} \| {\bf x}_{j,h} \|^2_2 -  \left\| {\bf A}_j {\bf x}_{j,h} \right\|^2_2 \right| ~\leq~ \sum^{N'_j}_{h=1} \left|  \| {\bf x}_{j,h} \|^2_2 -  \| {\bf A}_j {\bf x}_{j,h} \|^2_2 \right|\\
&\leq~ \frac{\epsilon}{\mathbbm{e}d } \sum^{N'_j}_{h=1} \| {\bf x}_{j,h} \|^2_2 = \frac{\epsilon}{\mathbbm{e}d }  \left\| {\bf X}^{(j-1)}_{(j)} \right\|^2_{\rm F} = \frac{\epsilon}{\mathbbm{e}d }   \left\| \mathcal{X}^{(j-1)} \right\|^2.
\end{align*}
A short induction argument now reveals that $\left\| \mathcal{X}^{(j)} \right\|^2~\leq~\left( 1+ \frac{\epsilon}{\mathbbm{e}d }  \right)^j\left\| \mathcal{X}^{(0)} \right\|^2$ holds for all $j \in [d]$.  As a result we can now see that
\begin{align*}
\left| \left\| \mathcal{X}^{(0)} \right\|^2 - \left\| \mathcal{X}^{(d)} \right\|^2 \right| ~&=~ \left| \sum^d_{j = 1} \left\| \mathcal{X}^{(j-1)} \right\|^2 - \left\| \mathcal{X}^{(j)} \right\|^2 \right|~\leq~ \sum^d_{j = 1} \left| \left\| \mathcal{X}^{(j-1)} \right\|^2 - \left\| \mathcal{X}^{(j)} \right\|^2 \right| ~\leq~\frac{\epsilon}{\mathbbm{e}d }  \sum^d_{j = 1} \left\| \mathcal{X}^{(j-1)} \right\|^2\\
&\leq~\frac{\epsilon}{\mathbbm{e}d } \sum^d_{j = 1}  \left( 1+ \frac{\epsilon}{\mathbbm{e}d }  \right)^{j-1}\left\| \mathcal{X}^{(0)} \right\|^2 ~\leq~\frac{\epsilon}{\mathbbm{e}} \left( 1+ \frac{\epsilon}{\mathbbm{e}d }  \right)^d\left\| \mathcal{X}^{(0)} \right\|^2.
\end{align*}
holds.  The desired result now follows from Remark~\ref{lem:IknowCalculus}.
\end{proof}

\end{document}